\documentclass[11pt,a4paper,twoside]{article}
\usepackage{geometry} 
\usepackage{bbm,mathrsfs,eurosym} 
\usepackage{amsmath, amsthm, amsfonts} 
\usepackage{color}

\geometry{paper=a4paper,portrait,left=2cm,right=2cm,top=3cm,bottom=3cm,footskip=1cm,headsep=0.5cm,headheight=2.5cm}

\newtheorem{thm}{Theorem}
\newtheorem{cor}{Corollary}
\newtheorem{lem}{Lemma}

\theoremstyle{remark}
\newtheorem{rem}{Remark}

\theoremstyle{remark}
\newtheorem{exa}{Example}

\theoremstyle{definition}
\newtheorem{defi}{Definition}

\newcommand{\dx}[0]{\,\mathrm{d}}

\newcommand{\fbsde}[0]{\textrm{}}
\newcommand{\esssup}[0]{\mathrm{ess}\,\mathrm{sup}}

\title{Existence, Uniqueness and Regularity of Decoupling Fields to Multidimensional Fully Coupled FBSDEs}
\author{Alexander Fromm, Peter Imkeller \vspace{3mm} \\ Institut f\"ur Mathematik \\ Humboldt-Universit\"at zu Berlin \\ Unter den Linden 6 \\ 10099 Berlin \\ Germany}
\date{\today}

\begin{document}

\maketitle

\begin{abstract}
We develop an existence, uniqueness and regularity theory for general multidimensional strongly coupled FBSDE using so called decoupling fields. We begin with a local result and extend it to a global theory via concatenation. The cornerstone of the global theory is the so called maximal interval which is, roughly speaking, the largest interval on which reasonable solutions exist. A method to verify that the maximal interval is the whole interval, for problems in which this is conjectured, is proposed. As part of our study of the regularity of solutions constructed we show variational differentiability under Lipschitz assumptions. \\ Extra emphasis is put on the more special Markovian case in which assumptions on the Lipschitz continuity for the FBSDE can be weakened to local ones, and additional regularity properties emerge.
\end{abstract}
\emph{Mathematics Subject Classification 2010:} Primary 60 H 30; secondary: 35 D 99, 35 K 59, 60 G 44, 60 J 60, 60 H 07, 60 H 20, 60 H 99, 93 E 03, 93 E 20. \\
\emph{Key words and phrases:} forward-backward stochastic differential equation; FBSDE; BSDE; strong coupling; decoupling field; decoupling random field; variational differentiability; quasilinear parabolic PDE.

\section{Introduction}

In recent decades Backward Stochastic Differential Equations (BSDE) and more generally Forward Backward Stochastic Differential Equations (FBSDE) have been studied extensively. They have many applications in various fields of applied mathematics, such as stochastic control theory and mathematical finance, and are closely tied to a major class of partial differential equations. A general FBSDE is a system of the form
$$ X_t=x+\int_{0}^t\mu(s,X_s,Y_s,Z_s)\dx s+\int_{0}^t\sigma(s,X_s,Y_s,Z_s)\dx W_s, $$
$$ Y_t=\xi(X_T)-\int_{t}^{T}f(s,X_s,Y_s,Z_s)\dx s-\int_{t}^{T}Z_s\dx W_s, $$
where $X$ and $Y$ can be multidimensional, such that the two equations represent systems of equations in general. The nature of the underlying problem is encoded in the parameter functions $\mu,\sigma,f$ which can be random, but at least progressively measurable, and the terminal condition $\xi$, which can also depend on $\omega$, but is required to be measurable w.r.t. $\mathcal{F}_T$, the information available at terminal time $T$.

The system is called decoupled if either $\mu,\sigma$ do not depend on $Y,Z$, or if $\xi$ does not depend on $X$. In these two cases the problem can be treated by solving one of the equations first, and then simply plugging the solution processes obtained into the other equation, in order to solve the latter in the second step. In both steps solutions can be constructed through a Picard iteration applying Banach's fixed point theorem. The theory of decoupled problems is much more extensive than the theory of general strongly coupled FBSDE, in which still many questions remain unanswered.

Several methods have been proposed to study these problems. The so called \emph{Four Step Scheme} (see \cite{mayong}) is based on reducing the problem to a quasi-linear parabolic PDE. This works for parameter functions which are deterministic and sufficiently smooth.

The \emph{Method of Continuation} (e.g. \cite{hupeng}) is purely stochastic, but relies on monotonicity assumptions for the parameter functions that might be hard to verify.

In order to develop a general technique, Zhang et al. have introduced the concept of \emph{Decoupling Fields} in \cite{jfzhang}. Decoupling fields can be used to extend the \emph{Contraction Method} proposed by Antonelli \cite{antonelli} to construct solutions on large intervals by patching together solutions constructed on small intervals. In \cite{jfzhang} the emphasis is on well-posedness: the authors are primarily interested in problems which have solutions on the whole interval $[0,T]$ and propose sufficient conditions based on the so called \emph{characteristic BSDE} and the \emph{dominating ODE}. Furthermore they concentrate on one-dimensional problems. We have found this to be a serious draw-back, since most coupled problems we have encountered involve multidimensional parameters, either in $W$ or $X$ or $Y$.

In this paper we will drop the restriction of well-posedness and develop an existence, uniqueness and regularity theory for the general case. In other words we consider ill-posed problems no less interesting or worthy of a rigorous study than well-posed problems. We will merely require Lipschitz continuity of the parameters in the general non-Markovian case and a form of local Lipschitz continuity in the more special Markovian case. The terminal condition will always be Lipschitz continuous in the process variables. To accommodate the fact that we do not require well-posedness of the problem we introduce the so called \emph{maximal interval}, which, roughly speaking, is the largest interval on which a given FBSDE system has reasonable solutions. The best case scenario is that the maximal interval coincides with $[0,T]$, such that the problem becomes well-posed. Based on our study of the form of the maximal interval and the behavior of the decoupling field at the left boundary, in the ill-posed case, we will propose a general method to verify well-posedness via contradiction. We have been able to apply this technique to construct solutions to various systems of strongly coupled multidimensional FBSDE appearing in problems like utility maximization in incomplete markets or the Skorohod embedding problem. This will be layed out in more detail in forthcoming work.

This paper is structured as follows. In Section \ref{prelim} we will define the notion of a decoupling field as done in \cite{jfzhang} and discuss some basic properties. Furthermore we will summarize some basic results about weak derivatives. Although some of theses statements might seem to be straightforward, we have included their proofs in the Appendix, since we have not been able to find a proper source to cite. \\
In Section \ref{local} we will prove a local existence and uniqueness result for decoupling fields (Theorem \ref{locexist}) for globally Lipschitz continuous coefficients. This result will serve as the fundament for the theory developed thereafter. The proof is constructive and is based on a contractive Picard-Lindel\"of iteration.\\
In Section \ref{examples} we discuss two simple examples to motivate the hypotheses of Theorem \ref{locexist}.\\
Section \ref{regularity} deals with regularity properties of decoupling fields. As a byproduct of the construction in Section \ref{local} we will obtain variational differentiability of solutions. More precisely, we show that $X,Y,Z$ depend in a weakly differentiable way on the initial vector $x$.\\
In Section \ref{global} we show global uniqueness and global regularity of decoupling fields, and study global existence by introducing the \emph{maximal interval}. We will prove a necessary condition for the problem to be ill-posed, and propose a general method to verify well-posedness in those cases in which it is conjectured. Our approach is related to the study of the \emph{characteristic BSDE} proposed in \cite{jfzhang}.\\
Furthermore we will discuss the Markovian case in more detail. Here our theory can be extended to coefficients that are not globally Lipschitz continuous, a very useful feature for applications. The decoupling field also assumes very nice properties in the Markovian case, such as being deterministic and continuous. We discuss the case in which the parameter functions are only locally Lipschitz in $Y,Z$. The case in which they are locally Lipschitz in $Z$ (and Lipschitz in the remaining components) deserves separate consideration. This will be left for future work.

\section{Preliminaries} \label{prelim}

\subsection{Decoupling Fields}

We will consider families $(\mu,\sigma,f)$ of measurable functions, more precisely
$$ \mu: [0,T]\times\Omega\times\mathbb{R}^n\times\mathbb{R}^m\times\mathbb{R}^{m\times d}\longrightarrow \mathbb{R}^n, $$
$$ \sigma: [0,T]\times\Omega\times\mathbb{R}^n\times\mathbb{R}^m\times\mathbb{R}^{m\times d}\longrightarrow \mathbb{R}^{n\times d}, $$
$$ f: [0,T]\times\Omega\times\mathbb{R}^n\times\mathbb{R}^m\times\mathbb{R}^{m\times d}\longrightarrow \mathbb{R}^m, $$
where
\begin{itemize}
\item $n,m,d\in\mathbb{N}$ and $T>0$,
\item $(\Omega,\mathcal{F},\mathbb{P},(\mathcal{F}_t)_{t\in[0,T]})$ is a complete filtered probability space,
\item $\mathcal{F}_0$ contains all null sets and also $\mathcal{F}_t=\sigma(\mathcal{F}_0,(W_s)_{s\in [0,t]})$ holds,
 where $(W_t)_{t\in[0,T]}$ is a $d$-dimensional Brownian motion, independent of $\mathcal{F}_0$,
\item $\mathcal{F}=\mathcal{F}_T$.
\end{itemize}
We want $\mu$, $\sigma$ and $f$ to be progressively measurable w.r.t. $(\mathcal{F}_t)_{t\in[0,T]}$, i.e. $\mu\mathbf{1}_{[0,t]},\sigma\mathbf{1}_{[0,t]},f\mathbf{1}_{[0,t]}$ must be $\mathcal{B}([0,T])\otimes\mathcal{F}_t\otimes\mathcal{B}(\mathbb{R}^n)\otimes\mathcal{B}(\mathbb{R}^m)\otimes\mathcal{B}(\mathbb{R}^{m\times d})$ - measurable for all $t\in[0,T]$. We will assume throughout the paper that $\mu$, $\sigma$ and $f$ have this property without mentioning it.

\begin{defi}
Let $\xi:\Omega\times\mathbb{R}^n\rightarrow\mathbb{R}^m$ be measurable and let $t\in[0,T]$.\\
We call a function $u:[t,T]\times\Omega\times\mathbb{R}^n\rightarrow\mathbb{R}^m$ with $u(T,\cdot)=\xi$ a.e. a \emph{decoupling field} for $\fbsde (\xi,(\mu,\sigma,f))$ on $[t,T]$ if for all $t_1,t_2\in[t,T]$ with $t_1\leq t_2$ and any $\mathcal{F}_{t_1}$ - measurable $X_{t_1}:\Omega\rightarrow\mathbb{R}^n$ there exist progressive processes $X,Y,Z$ on $[t_1,t_2]$ such that
\begin{itemize}
\item $X_s=X_{t_1}+\int_{t_1}^s\mu(r,X_r,Y_r,Z_r)\dx r+\int_{t_1}^s\sigma(r,X_r,Y_r,Z_r)\dx W_r$ a.s.,
\item $Y_s=Y_{t_2}-\int_{s}^{t_2}f(r,X_r,Y_r,Z_r)\dx r-\int_{s}^{t_2}Z_r\dx W_r$ a.s.,
\item $Y_s=u(s,X_s)$ a.s.,
\end{itemize}
for all $s\in[t_1,t_2]$. In particular we want all integrals to be well defined and $X,Y,Z$ to have values in $\mathbb{R}^n$, $\mathbb{R}^m$ and $\mathbb{R}^{m\times d}$ respectively.
\end{defi}

In the above definition the first equation is called the \emph{forward equation}, the second the \emph{backward equation} and the third will be referred to as the \emph{decoupling condition}. When we say that a triplet $(X,Y,Z)$ solves the FBSDE we mean, that it satisfies the forward and the backward equation, together with $Y_T=\xi(X_T)$ (we also require $t_2=T$).\\
Note that the \emph{terminal condition} $Y_T=\xi(X_T)$ is actually a consequence of the decoupling condition together with $u(T,\cdot)=\xi$.

At this point we do not require the triplet $(X,Y,Z)$ to be unique for given $t_1,t_2,X_{t_1}$.

Decoupling fields have the following very important property, which distinguishes them from classical solutions to FBSDEs.

\begin{lem}\label{glue}
If $u$ is a decoupling field for $\fbsde (\xi,(\mu,\sigma,f))$ on $[t,T]$ and a map
$\tilde{u}$ is a decoupling field for $\fbsde (u(t,\cdot),(\mu,\sigma,f))$ on $[s,t]$, where $0\leq s<t<T$, then the map $$\hat{u}:=\tilde{u}\mathbf{1}_{[s,t]}+u\mathbf{1}_{(t,T]}$$ is a decoupling field for $\fbsde  (\xi,(\mu,\sigma,f))$ on $[s,T]$.
\end{lem}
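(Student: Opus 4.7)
The plan is to verify the defining property of a decoupling field for $\hat u$ on $[s,T]$ by a case split on where the gluing time $t$ sits relative to an arbitrary sub-interval. Fix $t_1\le t_2$ in $[s,T]$ and an $\mathcal{F}_{t_1}$-measurable $X_{t_1}:\Omega\rightarrow\mathbb{R}^n$. If $t_2\le t$, the sub-interval lies inside $[s,t]$ and I invoke the decoupling property of $\tilde u$ directly; if $t_1\ge t$, I invoke that of $u$ on $[t_1,t_2]\subseteq[t,T]$. In both trivial cases $\hat u$ agrees with the relevant field on the sub-interval, with the only potentially delicate point being the single time $r=t$, where the equality $\tilde u(t,\cdot)=u(t,\cdot)$ a.e.\ --- built into the hypothesis that $\tilde u$ is a decoupling field with terminal condition $u(t,\cdot)$ --- ensures $\hat u(t,X_t)=Y_t$ a.s.\ whenever $t$ is an endpoint of $[t_1,t_2]$.

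The only substantial case is $t_1<t<t_2$, which I handle by a two-step construction. First, apply the decoupling property of $\tilde u$ on $[t_1,t]$ with initial datum $X_{t_1}$ to obtain a triplet $(X^1,Y^1,Z^1)$. Second, apply the decoupling property of $u$ on $[t,t_2]$ with the $\mathcal{F}_t$-measurable initial datum $X^1_t$ to obtain $(X^2,Y^2,Z^2)$. I then concatenate these into a single triplet $(X,Y,Z)$ on $[t_1,t_2]$ by letting each process equal the first piece on $[t_1,t]$ and the second piece on $(t,t_2]$. Progressive measurability transfers immediately from the pieces.

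Verifying the three defining properties then reduces to splitting integrals at $t$. The forward equation on $[t_1,t_2]$ follows from $\int_{t_1}^r=\int_{t_1}^t+\int_t^r$ for $r>t$, combined with the two pieces' forward equations and $X^2_t=X^1_t$ by construction. The backward equation is analogous but passes through the identification
\[
Y^1_t \;=\; \tilde u(t,X^1_t)\;=\;u(t,X^1_t)\;=\;u(t,X^2_t)\;=\;Y^2_t \quad\text{a.s.,}
\]
where the middle equality is the built-in compatibility $\tilde u(t,\cdot)=u(t,\cdot)$ and the outer equalities are the decoupling conditions of the two pieces. The decoupling condition $Y_r=\hat u(r,X_r)$ then holds on each piece by construction of $\hat u$.

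The main --- and really the only --- technical point is this matching of the two $Y$-pieces at $t$, which is immediate from the hypothesis. Everything else is routine bookkeeping with Lebesgue and It\^o integrals split at the common endpoint.
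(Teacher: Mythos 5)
Your proposal is correct and follows essentially the same route as the paper's proof: the only nontrivial case is $t_1<t<t_2$, handled by solving on $[t_1,t]$ via $\tilde u$, restarting on $[t,t_2]$ via $u$ from the obtained value of $X$ at $t$, concatenating, and matching the $Y$-pieces at $t$ through $\tilde u(t,\cdot)=u(t,\cdot)$. The paper leaves the two trivial cases implicit, but otherwise the construction and the key identification $\tilde Y_t=\tilde u(t,\tilde X_t)=u(t,X_t)=Y_t$ are exactly as you describe.
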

\begin{proof}
Assume we have a $t_1\in[s,t)$ and a $t_2\in(t,T]$. For any $\mathcal{F}_{t_1}$ - measurable $\hat{X}_{t_1}:\Omega\rightarrow\mathbb{R}^n$ we need to show existence of processes $\hat{X},\hat{Y},\hat{Z}$ solving our FBSDE on $[t_1,t_2]$ s.t. $\hat{Y}_r=\hat{u}(r,\hat{X}_r)$ a.s. for $r\in[t_1,t_2]$.

We construct theses processes in two steps: Firstly, we choose progressive processes $\tilde{X},\tilde{Y},\tilde{Z}$ on $[t_1,t]$ solving our FBSDE on $[t_1,t]$ with initial value $\hat{X}_{t_1}$ and satisfying $\tilde{Y}_r=\tilde{u}(r,\tilde{X}_r)=\hat{u}(r,\tilde{X}_r)$, $r\in[t_1,t]$, according to the definition of a decoupling field. \\
Moreover, there exist progressive processes $X,Y,Z$ on $[t,t_2]$ satisfying
\begin{itemize}
\item $X_r=\tilde{X}_t+\int_{t}^r\mu(v,X_v,Y_v,Z_v)\dx v+\int_{t}^r\sigma(v,X_v,Y_v,Z_v)\dx W_v$,
\item $Y_r=Y_{t_2}-\int_{r}^{t_2}f(v,X_v,Y_v,Z_v)\dx v-\int_{r}^{t_2}Z_v\dx W_v$,
\item $Y_r=u(r,X_r)=\hat{u}(r,X_r)$,
\end{itemize}
a.s. for all $r\in[t,t_2]$. Now define $\hat{X}$ on $[t_1,t_2]$ via $$\hat{X}:=\tilde{X}\mathbf{1}_{[t_1,t]}+X\mathbf{1}_{(t,t_2]}$$
and similarly define $\hat{Y}$ and $\hat{Z}$. \\
Note $\tilde{X}_t=X_t$ and also $\tilde{Y}_t=\tilde{u}(t,\tilde{X}_t)=u(t,X_t)=Y_t$. It is easy to check that $\hat{X},\hat{Y},\hat{Z}$ satisfy the FBSDE on $[t_1,t_2]$ and the decoupling condition.
\end{proof}

Note from the definition that if $u$ is a decoupling field and $\tilde{u}$ is a modification of $u$, i.e. for each $s\in[t,T]$ the functions $u(s,\omega,\cdot)$ and $\tilde{u}(s,\omega,\cdot)$ coincide for almost all $\omega\in\Omega$, then $\tilde{u}$ is also a decoupling field to the same problem. So $u$ could also be referred to as a class of modifications. Some of the representants of the class might be progressively measurable, others not. We will see below that a progressively measurable representant does exist if the decoupling field is Lipschitz continuous in $x$:

\begin{lem}
Let $u:[t,T]\times\Omega\times\mathbb{R}^n\rightarrow\mathbb{R}^m$ be a decoupling field to $\fbsde(\xi,(\mu,\sigma,f))$ which is Lipschitz continuous in $x\in\mathbb{R}^n$ in the sense that there exists a constant $L>0$ s.t. for every $s\in[t,T]$:
$$|u(s,\omega,x)-u(s,\omega,x')|\leq L|x-x'|\qquad\forall x,x'\in\mathbb{R}^n, \qquad\textrm{ for a.a. }\omega\in\Omega.$$
Then $u$ has a modification $\tilde{u}$ which is
\begin{itemize}
\item progressively measurable,
\item Lipschitz continuous in $x$ in the strong sense $$|\tilde{u}(s,\omega,x)-\tilde{u}(s,\omega,x')|\leq L|x-x'|\qquad \forall s\in[t,T],\,\omega\in\Omega,\, x,x'\in\mathbb{R}^n$$
\item and "weakly right-continuous" in the sense that
$$\lim_{n\rightarrow\infty}\tilde{u}(s_n,\cdot,x_n)=\tilde{u}(s',\cdot,x')\qquad\textrm{a.s.},$$
for all $(s',x')\in[t,T]\times\mathbb{R}^n$ and all sequences $(s_n)\subset [s',T]$, $(x_n)\subset \mathbb{R}^n$, converging to $s'$ and $x'$ respectively.
\end{itemize}
\end{lem}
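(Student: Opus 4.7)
The plan is to construct $\tilde u$ in three stages: first pin down a progressively measurable representative of $u$ at rational time-space pairs $(s,q)$ using the FBSDE structure, then extend in the spatial variable to all of $\mathbb{R}^n$ using the Lipschitz property, and finally right-regularize in time. The key observation that makes the whole construction work is that $u$ itself is a.s.\ weakly right-continuous as a consequence of the continuity of the paths of the FBSDE solution, and this property can then be transferred to the extension.

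For the first stage, for each $(s,q)\in(\mathbb{Q}\cap[t,T])\times\mathbb{Q}^n$, applying the decoupling property on $[s,T]$ to the deterministic initial value $X_s\equiv q$ yields $Y_s=u(s,\cdot,q)$ a.s.\ with $Y_s$ being $\mathcal{F}_s$-measurable; fix an $\mathcal{F}_s$-measurable version $v(s,\omega,q)$. Since only countably many pairs are involved, a single null set $N_1$ suffices off which the Lipschitz inequalities $|v(s,\omega,q)-v(s,\omega,q')|\le L|q-q'|$ hold simultaneously for all rational $(s,q,q')$. For $\omega\notin N_1$ and rational $s$, define $\bar v(s,\omega,x)$ on $\mathbb{R}^n$ as the unique $L$-Lipschitz extension of $v(s,\omega,\cdot)$ from $\mathbb{Q}^n$, and set it to zero on $N_1$. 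Then $\bar v(s,\cdot,\cdot)$ is jointly Borel-measurable for each rational $s$, pointwise $L$-Lipschitz in $x$, and $\bar v(s,\cdot,x)$ is $\mathcal{F}_s$-measurable.

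For the second stage, I show that $u$ already satisfies the weak right-continuity claim. Fix $(s',x')\in[t,T)\times\mathbb{R}^n$ and sequences $s_n\downarrow s'$, $x_n\to x'$. Apply the decoupling property on $[s',T]$ to the deterministic initial datum $X_{s'}=x'$, obtaining continuous processes $X,Y$ with $Y_r=u(r,X_r)$ a.s.\ for each $r\in[s',T]$. Path continuity of $X$ and $Y$ gives $X_{s_n}\to x'$ and $Y_{s_n}\to Y_{s'}=u(s',\cdot,x')$ a.s., while the a.s.\ Lipschitz property applied at the countably many times $s_n$ (off a single null set) yields $|u(s_n,\cdot,X_{s_n})-u(s_n,\cdot,x_n)|\le L|X_{s_n}-x_n|\to 0$. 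Hence $u(s_n,\cdot,x_n)\to u(s',\cdot,x')$ a.s. Specializing to rational $(s',q)$ and rational sequences and taking the union of the countably many exceptional null sets enlarges $N_1$ to a null set $N$ off which $v(s_n,\omega,q)\to v(s,\omega,q)$ whenever rational $s_n\downarrow s$, for every rational $(s,q)$.

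For the third stage, for $s\in[t,T)$, $\omega\notin N$, $x\in\mathbb{R}^n$, define $\tilde u(s,\omega,x):=\lim_{s_n\downarrow s,\,s_n\in\mathbb{Q}}\bar v(s_n,\omega,x)$, set $\tilde u:=0$ on $N$, and $\tilde u(T,\cdot,\cdot):=\xi$. The uniform spatial Lipschitz bound together with the right-continuity at rational times established above forces the limit to exist and transfers the $L$-Lipschitz property to $\tilde u$ pointwise. Right-continuity of the Brownian filtration gives $\mathcal{F}_s=\mathcal{F}_{s+}$, so $\tilde u(s,\cdot,x)$, being a countable limit of $\mathcal{F}_{s_n}$-measurable random variables with $s_n\downarrow s$, is $\mathcal{F}_s$-measurable; joint Borel-measurability in $(s,\omega,x)$ follows from the right-continuity in $s$ of a pointwise construction, yielding progressive measurability. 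The weak right-continuity of $\tilde u$ is exactly the statement of stage two, now read on $\tilde u$; and the verification that $\tilde u$ is a modification of $u$ at arbitrary $(s,x)$ is done by combining a.s.\ right-continuity of both $u$ and $\tilde u$ along rational $s_n\downarrow s$, after first checking the identification $\tilde u(s,\cdot,q)=u(s,\cdot,q)$ a.s.\ at rational $(s,q)$ and extending in $x$ by the a.s.\ Lipschitz property of $u$. The main technical obstacle is keeping the null-set bookkeeping tight enough that every pointwise statement in the construction holds off a single full-measure set, so that the Lipschitz extensions and time-limits can be performed $\omega$-by-$\omega$ in a consistent way.
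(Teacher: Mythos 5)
Your stages 1 and 2 follow the same route as the paper: run the FBSDE from a deterministic initial value, use path continuity of $X,Y$ together with the decoupling condition $Y_r=u(r,X_r)$ and the a.s.\ Lipschitz bound to get $u(s_n,\cdot,x_n)\to u(s',\cdot,x')$ a.s.; this part is fine. The genuine gap is in stage 3, where you \emph{define} $\tilde u(s,\omega,x):=\lim_{s_n\downarrow s,\,s_n\in\mathbb{Q}}\bar v(s_n,\omega,x)$ and assert that ``the uniform spatial Lipschitz bound together with the right-continuity at rational times established above forces the limit to exist.'' What stage 2 gives you is: for each \emph{fixed} $(s',x')$ (in particular each rational pair), a null set outside of which $\bar v(r,\cdot,x')\to u(s',\cdot,x')$ as $r\downarrow s'$ through the rationals. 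Taking the countable union over rational pairs controls the behaviour at rational times only. For your formula to define $\tilde u$ you need, off a \emph{single} null set, existence of the right limit along rationals at \emph{every} $s\in[t,T)$, i.e.\ at uncountably many (mostly irrational) times, and a.s.\ right-continuity at each fixed time does not imply a.s.\ existence of right limits at all times simultaneously (there is no upcrossing/regularization structure available for $r\mapsto\bar v(r,\omega,q)$ here). The spatial Lipschitz bound reduces the problem to rational $x$, but it does nothing about the time variable, so as written $\tilde u$ is not well defined; the subsequent claims (pointwise right-continuity in $s$, hence joint measurability, hence progressivity) inherit this defect.

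The gap is repairable, and the repair is essentially what the paper does: replace the limit by a quantity that always exists — the paper uses $\limsup_n$ of values of $u$ along a deterministic sequence of grids whose relevant grid point lies in $(s,s+\varepsilon]$ — and observe that being a modification only requires, for each \emph{fixed} $(s,x)$, that this $\limsup$ a.s.\ coincides with $u(s,\cdot,x)$, which is exactly your stage-2 statement (run with all rational $r>s$ simultaneously, so that the exceptional null set depends only on $(s,x)$ and not on the approximating sequence). Strong Lipschitz continuity survives the $\limsup$, progressive measurability follows because the approximating values are $\mathcal{F}_{s+\varepsilon}$-measurable for every $\varepsilon>0$ together with $\mathcal{F}_{s+}=\mathcal{F}_s$, and weak right-continuity then transfers to any modification. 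If you rewrite stage 3 in this $\limsup$ form (or adopt the paper's grid construction), your argument closes; in its present form the well-definedness step fails.
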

\begin{proof}
We can assume without loss of generality that $u$ is truly Lipschitz continuous in $x$ with Lipschitz constant $L$ by modifying it for every fixed $s\in[t,T]$ such that $u(s,\omega,\cdot)$ is set to $0$, if it is not Lipschitz continuous with constant $L$. This will have to be done for a set of $\omega$, which has measure zero (for each fixed $s\in[t,T]$).

Choose any $t_1$ from the interval $[t,T]$ and some $x'\in\mathbb{R}^n$ as initial value of the problem
$$ X_s=x'+\int_{t_1}^s\mu(r,X_r,Y_r,Z_r)\dx r+\int_{t_1}^s\sigma(r,X_r,Y_r,Z_r)\dx W_r, $$
$$ Y_s=Y_T-\int_{s}^{T}f(r,X_r,Y_r,Z_r)\dx r-\int_{s}^{t_2}Z_r\dx W_r, $$
$$ Y_s=u(s,X_s). $$
Let $x$ be another point in $\mathbb{R}^n$ and choose any $t_2\in [t_1,T]$. We can assume that $X$ and $Y$ are continuous (we can choose such modifications). We use the triangle inequality together with the decoupling condition $Y_s=u(s,X_s)$:
\begin{multline*}
|u(t_2,x)-u(t_1,x')|\leq |u(t_2,x)-u(t_2,x')|+|u(t_2,x')-u(t_2,X_{t_2})|+|u(t_2,X_{t_2})-u(t_1,x')|\leq \\
=L|x-x'|+|u(t_2,X_{t_1})-u(t_2,X_{t_2})|+|Y_{t_2}-Y_{t_1}|\leq L|x-x'|+L|X_{t_2}-X_{t_1}|+|Y_{t_2}-Y_{t_1}|.
\end{multline*}
Choosing sequences $t^{(n)}_2\downarrow t_1$ and $x^{(n)}\rightarrow x'$, we obtain
$$\lim_{n\rightarrow\infty}u(t^{(n)}_2,\cdot,x^{(n)})=u(t_1,\cdot,x') \qquad\textrm{a.s. }$$
from the continuity of the processes $X$ and $Y$. Now define $\tilde{u}$ via
$$ \tilde{u}(s,\omega,x):=\limsup_{n\rightarrow\infty}\sum_{m=1}^n u\left(t+m\frac{T-t}{n},\omega,x\right)\mathbf{1}_{\left(t+(m-1)\frac{T-t}{n},t+m\frac{T-t}{n}\right]}(s).$$
Now observe:
\begin{itemize}
\item $\tilde{u}$ clearly inherits the (strong) Lipschitz continuity in $x$ from $u$.
\item For any $s\in[t,T]$ and any $\varepsilon>0$ the function $\tilde{u}\mathbf{1}_{[t,s]}$ is $\mathcal{B}([0,T])\otimes\mathcal{F}_{s+\varepsilon}\otimes\mathcal{B}(\mathbb{R}^n)$ - measurable, since  $u\left(t+m\frac{T-t}{n},\omega,x\right)\mathbf{1}_{\left(t+(m-1)\frac{T-t}{n},t+m\frac{T-t}{n}\right]\cap[t,s]}$ is measurable w.r.t. this $\sigma$-algebra if $n$ is large enough. Thus $\tilde{u}\mathbf{1}_{[t,s]}$ is $\mathcal{B}([0,T])\otimes\mathcal{F}_{s+}\otimes\mathcal{B}(\mathbb{R}^n)$ - measurable for all $s$ and so $\tilde{u}$ is progressively measurable due to $\mathcal{F}_{s+}=\mathcal{F}_{s}$.
\item For all $s\in[t,T]$ and all $x\in\mathbb{R}^n$ the random variables $\tilde{u}(s,\cdot,x)$ and $u(s,\cdot,x)$ are a.s. equal, since $\lim_{n\rightarrow\infty}u\left(t+m(s,n)\frac{T-t}{n},\omega,x\right)=u(s,\omega,x)$ for a.a. $\omega$, where $m(s,n)$ is the unique element of $\{1,\ldots,n\}$ s.t. $\mathbf{1}_{\left(t+(m(s,n)-1)\frac{T-t}{n},t+m(s,n)\frac{T-t}{n}\right]}(s)=1$. Note here that $t+m(s,n)\frac{T-t}{n}\geq s$ converges to $s$ for $n\rightarrow\infty$. \\ Due to Lipschitz continuity in $x$ the maps $x\mapsto \tilde{u}(s,\omega,x)$ and $x\mapsto u(s,\omega,x)$ must also coincide for a.a. $\omega$ and so $\tilde{u}$ is indeed a modification of $u$.
\item As a modification $\tilde{u}$ inherits the "weak right-continuity" of $u$.
\end{itemize}
\end{proof}

\subsection{Weak Derivatives}

In this paper we will work extensively with weak derivatives. This will allow us to show variational differentiability (i.e. w.r.t. the initial value $x\in\mathbb{R}^n$) of the processes $X,Y,Z$ for Lipschitz continuous $\mu,\sigma,f,\xi$. \\
We start by fixing notation and giving some definitions.

For the following $|\cdot|$ will denote the usual square norm in any finite dimensional Euclidean space.\\
We can interpret elements of $\mathbb{R}^{n\times d}$ and $\mathbb{R}^{m\times d}$ as matrices or as linear operators from $\mathbb{R}^{d}$ with values in $\mathbb{R}^{n}$ or $\mathbb{R}^{m}$. Similarly we interpret $\mathbb{R}^{m \times d \times n}$ as the space of linear mappings from $\mathbb{R}^n$ to $\mathbb{R}^{m \times d}$.\\
If $x\in\mathbb{R}^{m\times d}$ or $x\in\mathbb{R}^{n\times d}$ the expression $|x|$ denotes the Frobenius norm of the linear operator $x$, i.e. the square root of the sum of the squares of its matrix coefficients. \\
If $x\in\mathbb{R}^{n\times n}$ or $x\in\mathbb{R}^{m\times n}$ or $x\in\mathbb{R}^{m \times d \times n}$ or $x\in\mathbb{R}^{n \times d \times n}$ we define $|x|_v:=|x\cdot v|$ for all $v\in S^{n-1}$, where $\cdot$ is the application of the linear operator $x$ to the vector $v$ such that $x\cdot v$ is in $\mathbb{R}^{n}$ or $\mathbb{R}^{m}$ or $\mathbb{R}^{m \times d}$ or $\mathbb{R}^{n \times d}$.

We denote by $L_{\xi,x}$ the Lipschitz constant of a map $\xi: \Omega\times\mathbb{R}^n\rightarrow \mathbb{R}^m$ w.r.t. the Euclidean norms, i.e.
$$ L_{\xi,x}:=\inf\left\{L\geq 0\,|\,|\xi(\omega,x_1)-\xi(\omega,x_2)|\leq L|x_1-x_2|\forall x_1,x_2\textrm{ for a.a. $\omega$}\right\}, $$
where $\inf \emptyset:=\infty$. Note that $\xi$ is Lipschitz continuous w.r.t. $x\in\mathbb{R}^n$ if and only if $L_{\xi,x}<\infty$. \\

Consider a mapping $X:\mathcal{M}\times\Lambda\rightarrow\mathbb{R}$, where $(\mathcal{M},\mathcal{A},\rho)$ is some complete measure space and $\Lambda\subseteq\mathbb{R}^N$ is open, $N\in\mathbb{N}$. We say that $X$ is \emph{weakly differentiable w.r.t.} the parameter $\lambda\in\Lambda$, if for almost all $\omega\in\mathcal{M}$ the mapping $X(\omega,\cdot):\Lambda\rightarrow\mathbb{R}$ is weakly differentiable. This means that there exists a mapping $\frac{\dx}{\dx\lambda}X:\mathcal{M}\times\Lambda\rightarrow\mathbb{R}^{1\times N}$ such that
$$ \int_{\Lambda}\varphi(\lambda)\frac{\dx}{\dx\lambda}X(\omega,\lambda)\dx\lambda=
-\int_{\Lambda}X(\omega,\lambda)\frac{\dx}{\dx\lambda}\varphi(\lambda)\dx\lambda,$$
for any real valued test function $\varphi\in C^{\infty}_c(\Lambda)$ and almost all $\omega\in\mathcal{M}$. In particular $X(\omega,\cdot)$ and $\frac{\dx}{\dx\lambda}X(\omega,\cdot)$ have to be locally integrable for a.a. $\omega$. This of course includes measurability for almost every \emph{fixed} $\omega$. \\
Similarly we could also define higher order weak differentiability. Weak differentiability for vector valued mappings is defined componentwise. \\
We call two maps $Y,Z:\mathcal{M}\times\Lambda\rightarrow\mathbb{R}^{1\times N}$ modifications of each other if $Y(\omega,\cdot)$ and $Z(\omega,\cdot)$ are a.e. equal for almost every fixed $\omega$. Obviously a modification of a weak derivative is again a weak derivative (of the same $X$). \\
If $X$ is a \emph{measurable} a function of $(\omega,\lambda)$, its weak derivative $\frac{\dx}{\dx\lambda}X$ will have a measurable modification: For all $v\in \mathbb{R}^N$ and all $h>0$ we can write
\begin{equation}\label{fundu}\int_0^h\frac{\dx}{\dx\lambda}X(\omega,\lambda_0+t v)v\dx t=X(\omega,\lambda_0+hv)-X(\omega,\lambda_0),\end{equation}
for a.a. $\lambda_0\in\Lambda$, s.t. $\overline{B_{h|v|}(\lambda_0)}\subseteq\Lambda$, for almost every $\omega\in\mathcal{M}$ (Lemma \ref{fundulem}). For instance choose $h=h_n=n^{-1}$, $n\in\mathbb{N}$. Clearly $Y(\omega,\lambda_0):=\limsup_{n\rightarrow\infty}\frac{1}{h_n}(X(\omega,\lambda_0+h_nv)-X(\omega,\lambda_0))$ is a measurable function of $(\omega,\lambda_0)$. However $Y$ is a modification of $\frac{\dx}{\dx\lambda}Xv$ due to (\ref{fundu}) and Lebesgue's differentiation theorem. This allows us to construct a measurable modification of $\frac{\dx}{\dx\lambda}X$ by taking canonical unit vectors for $v$. \\
The relationship $\frac{\dx}{\dx\lambda}X(\omega,\lambda_0)v=\limsup_{n\rightarrow\infty}\frac{1}{h_n}(X(\omega,\lambda_0+h_nv)-X(\omega,\lambda_0))$, which holds for almost all $\lambda_0$, for almost all $\omega$, also implies uniqueness of $\frac{\dx}{\dx\lambda}X$ up to modifications. \\

If a map $\xi: \Omega\times\mathbb{R}^n\rightarrow \mathbb{R}^m$ is measurable and $L_{\xi,x}<\infty$ then $\xi$ is weakly differentiable w.r.t $x$ (according to Rademacher's theorem) and we can also write
$$ L_{\xi,x}=\esssup\left\{\left|\frac{\dx}{\dx x} \xi(\omega,x)\right|_v\,\Bigg|\,\omega\in\Omega,x\in\mathbb{R}^n,v\in S^{n-1}\right\},$$
where the weak derivative $\frac{\dx}{\dx x} \xi$ has values in $\mathbb{R}^{m\times n}$, so $\frac{\dx}{\dx x} \xi(\omega,x)v\in\mathbb{R}^m$, if $v\in\mathbb{R}^n$. \\

Note that we have the following "chain rule" for weak derivatives.


\begin{lem}\label{chainrule}
Let $g: \mathcal{M}\times\mathbb{R}^d\rightarrow \mathbb{R}^m$ be measurable s.t. $L_{g,x}<\infty$. Furthermore let $X_i: \mathcal{M}\times\mathbb{R}^n\rightarrow \mathbb{R}^{d_i}$, $i=1,\ldots,k$ be measurable and weakly differentiable w.r.t. $\lambda\in\mathbb{R}^n$. Let $X:=(X_1,\ldots,X_k)$ be $\mathbb{R}^{d}$-valued, i.e. assume $\sum_{i=1}^k d_i=d$. \\
Then the measurable mapping $g(X): \mathcal{M}\times\mathbb{R}^n\rightarrow \mathbb{R}^{m}$ is also weakly differentiable w.r.t. $\lambda\in\mathbb{R}^n$ and furthermore there exist measurable mappings $\Delta^{X}_{x_i}g:\mathcal{M}\times\mathbb{R}^n\times\mathbb{R}^n\rightarrow\mathbb{R}^{m\times d_i} $ s.t.
\begin{itemize}
\item $\sup_{v\in S^{d_i-1}}|\Delta^{X}_{x_i}g|_v\leq L_{g,x_i}$ everywhere for every $i=1,\ldots,k$,
\item for all $v\in \mathbb{R}^n$
$$ \left(\frac{\dx}{\dx \lambda}g(X)(\omega,\lambda)\right)v=\sum_{i=1}^k\left(\Delta^{X}_{x_i}g(\omega,\lambda,v)\right)\left(\frac{\dx}{\dx \lambda}X_i(\omega,\lambda)\right)v$$
holds for almost all $\lambda\in\mathbb{R}^d$, $\omega\in\mathcal{M}$.
\end{itemize}
\end{lem}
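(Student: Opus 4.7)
The plan is to mollify the outer function $g$ in its spatial variable, apply the elementary chain rule to the regularization $g_\varepsilon \circ X$, and extract a weak-$*$ limit of the regularized Jacobians $\nabla_{x_i}g_\varepsilon(X)$ to define $\Delta^X_{x_i}g$. Concretely, pick a standard smooth mollifier $\rho_\varepsilon$ on $\mathbb{R}^d$ and set $g_\varepsilon(\omega,x):=(g(\omega,\cdot)*\rho_\varepsilon)(x)$. Then $g_\varepsilon(\omega,\cdot)\in C^\infty$, the partial derivatives $\nabla_{x_i}g_\varepsilon=(\nabla_{x_i}g)*\rho_\varepsilon$ have operator norm bounded pointwise by $L_{g,x_i}$ (since convolution does not increase Lipschitz constants), and $g_\varepsilon\to g$ uniformly on compacta.

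Since $g_\varepsilon$ is $C^1$ in $x$ with bounded gradient, the elementary chain rule applies to its composition with the weakly differentiable map $X$: for every $v\in\mathbb{R}^n$ and every test function $\varphi\in C^\infty_c(\mathbb{R}^n)$ one has, for a.a. $\omega$,
$$-\int\frac{\dx\varphi}{\dx\lambda}(\lambda)v\;g_\varepsilon(X(\omega,\lambda))\dx\lambda = \int\varphi(\lambda)\sum_{i=1}^k\nabla_{x_i}g_\varepsilon(\omega,X(\omega,\lambda))\frac{\dx}{\dx\lambda}X_i(\omega,\lambda)v\dx\lambda.$$
As $\varepsilon\downarrow 0$ the LHS converges to the analogue with $g$ by uniform convergence. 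For the RHS, the family of $\mathbb{R}^{m\times d_i}$-valued maps $(\omega,\lambda)\mapsto\nabla_{x_i}g_\varepsilon(\omega,X(\omega,\lambda))$ is uniformly bounded in $L^\infty$ by $L_{g,x_i}$. Banach--Alaoglu together with a diagonal argument (exhausting $\mathbb{R}^n$ by compacts and using separability of the relevant $L^1$ spaces) yields a single subsequence $\varepsilon_n\downarrow 0$ and measurable weak-$*$ limits $\Delta_i(\omega,\lambda)$ with the same operator norm bound, and the RHS converges to $\int\varphi\sum_i\Delta_i\frac{\dx}{\dx\lambda}X_i v\dx\lambda$ for all $\varphi$ and all $v$, since $\varphi\frac{\dx X_i}{\dx\lambda}v\in L^1_{\mathrm{loc}}$.

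By uniqueness of weak derivatives this identifies $\frac{\dx}{\dx\lambda}g(X)(\omega,\lambda)v=\sum_i\Delta_i(\omega,\lambda)\frac{\dx}{\dx\lambda}X_i(\omega,\lambda)v$ for a.a. $(\omega,\lambda)$, establishing weak differentiability of $g(X)$ together with the stated chain rule; one then modifies $\Delta_i$ on a null set to enforce the operator norm bound $\sup_{v'\in S^{d_i-1}}|\Delta_i|_{v'}\leq L_{g,x_i}$ everywhere and declares $\Delta^X_{x_i}g(\omega,\lambda,v):=\Delta_i(\omega,\lambda)$ (constant in $v$, although the statement allows for $v$-dependence, which is only needed if one extracts the subsequence separately for each direction). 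The main obstacle is the classical phenomenon that when $g$ is merely Lipschitz, the naive formula $\nabla(g\circ X)=(\nabla g)(X)\nabla X$ can fail because $X$ may concentrate on the null set where $\nabla g$ is ill-defined (the Serrin--Marcus--Mizel pathology); the mollification-plus-weak-$*$-compactness detour is what produces a surrogate Jacobian $\Delta^X_{x_i}g$ that is adapted to the trajectory of $X$ and makes the chain rule go through, at the price of losing pointwise agreement with $\nabla g$ evaluated along $X$.
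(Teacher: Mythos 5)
Your strategy (mollify $g$, use the elementary chain rule for $C^1$ outer functions, pass to a weak-$*$ limit of the bounded Jacobians) is sound in its analytic parts: the bound $\sup_{v'\in S^{d_i-1}}|\nabla_{x_i}g_\varepsilon|_{v'}\leq L_{g,x_i}$, the uniform convergence $g_\varepsilon\to g$, and the chain rule for $g_\varepsilon\circ X$ are all fine. The gap is in the step ``a single subsequence $\varepsilon_n$ and measurable weak-$*$ limits $\Delta_i$'', and this step is precisely the content of the lemma beyond the Ambrosio--Dal Maso chain rule. If the weak-$*$ extraction is done $\omega$-wise in $L^\infty_{\mathrm{loc}}(\mathbb{R}^n_\lambda)$ (which is where your pairing with $\varphi\,\frac{\dx}{\dx\lambda}X_i\,v\in L^1_{\mathrm{loc}}$ lives), the subsequence depends on $\omega$, and limits chosen $\omega$-by-$\omega$ give no joint measurability of $(\omega,\lambda)\mapsto\Delta_i(\omega,\lambda)$ --- but joint measurability is exactly what the statement requires, and what the paper later uses when the $\Delta$'s become (progressively measurable) coefficients of the differentiated FBSDE. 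If instead you mean weak-$*$ convergence in $L^\infty(\mathcal{M}\times K)$ for compacts $K$, the ``separability of the relevant $L^1$ spaces'' is not available: $\mathcal{M}$ is an arbitrary complete measure space (in the application, $\Omega$ with a $\sigma$-algebra containing an arbitrary $\mathcal{F}_0$, hence not countably generated), so $L^1(\mathcal{M}\times K)$ need not be separable; moreover the hypotheses give only $\omega$-wise local integrability of $X_i$ and $\frac{\dx}{\dx\lambda}X_i$ in $\lambda$, and $\rho$ need not be finite, so $\varphi\,\frac{\dx}{\dx\lambda}X_i\,v$ and $g(X)$ need not belong to $L^1$ of the product space --- which blocks both the identification of the limit against such test elements and the limit passage on the left-hand side once you integrate in $\omega$. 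So as written the existence of \emph{measurable} $\Delta^X_{x_i}g$ is not established.

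The gap can be closed, but it requires a different device from sequential Banach--Alaoglu: on finite measure spaces one could use a Koml\'os-type argument (Ces\`aro means of a subsequence converging a.e., hence to a jointly measurable bounded limit) or a measurable selection from the set of weak-$*$ cluster points; the paper avoids compactness altogether. It cites Ambrosio--Dal Maso only for the weak differentiability of $g(X)$ and then \emph{defines} $\Delta^X_{x_i}g(\cdot,\lambda,v)$ explicitly as a $\limsup$, as $t=\frac{1}{l}\downarrow 0$, of telescoped difference quotients in which the $i$-th block of arguments of $g$ is shifted from $X_i(\lambda)$ to $X_i(\lambda+tv)$; this object is jointly measurable by construction, bounded by $L_{g,x_i}$, and the chain-rule identity is verified at a.e. fixed $(\omega,\lambda)$ by passing to an $(\omega,\lambda)$-dependent subsequence realizing the $\limsup$, at the harmless price of letting $\Delta^X_{x_i}g$ depend on the direction $v$. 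If you rework your argument along one of these lines (or simply restrict to the situations where $\rho$ is finite and add a Koml\'os step), the rest of your proof goes through.
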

\begin{proof}
See Appendix.
\end{proof}

For later reference we state Lemmas \ref{wd1}-\ref{wd5}. They will be needed to justify interchanging differentiation (in the weak sense) with integration w.r.t. time, probability measure or Brownian motion.

\begin{lem}\label{wd1}
Let $X:\Omega\times\mathbb{R}^n\rightarrow\mathbb{R}$ be measurable, s.t.
\begin{itemize}
\item $X$ is weakly differentiable w.r.t. $\lambda$,
\item $\mathbb{E}\left[\left|X(\cdot,\lambda)\right|\right]$ and $\mathbb{E}\left[\left|\frac{\dx}{\dx\lambda}X(\cdot,\lambda)\right|\right]$ are both locally integrable w.r.t. $\lambda$.
\end{itemize}
Let also $\mathcal{G}\subseteq\mathcal{F}$ be a $\sigma$-algebra. Then the mapping $(\omega,\lambda)\mapsto\mathbb{E}[X(\cdot,\lambda)|\mathcal{G}](\omega)$ is measurable and weakly differentiable w.r.t. $\lambda$ and
$\frac{\dx}{\dx\lambda}\mathbb{E}[X(\cdot,\lambda)|\mathcal{G}]=\mathbb{E}\left[\frac{\dx}{\dx\lambda}X(\cdot,\lambda)|\mathcal{G}\right]$.
\end{lem}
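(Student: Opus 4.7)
The plan is to reduce everything to the defining identity of weak differentiability, test it against arbitrary $\varphi\in C^\infty_c(\mathbb{R}^n)$, and then interchange a $d\lambda$-integral with the conditional expectation using a Fubini argument that is legitimate under the stated local-integrability hypotheses. I will treat $X$ as real-valued; the vector case follows componentwise, so $\frac{\mathrm{d}}{\mathrm{d}\lambda}X$ is treated coordinate by coordinate.

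First I would handle measurability. Since $X$ is jointly measurable and $\mathbb{E}|X(\cdot,\lambda)|$ is locally integrable in $\lambda$, one may approximate $X$ in $L^1(\Omega)$ uniformly on compacts in $\lambda$ by functions of the form $\sum_{k} Y_k(\omega)\mathbf 1_{A_k}(\lambda)$, with $Y_k\in L^1(\Omega)$ and $A_k$ a Borel set. For such finite sums the conditional expectation $\mathbb E[\,\cdot\mid\mathcal G]$ is trivially jointly measurable in $(\omega,\lambda)$, and passing to a limit along a subsequence produces a jointly measurable version of $\mathbb E[X(\cdot,\lambda)\mid\mathcal G](\omega)$; an identical construction yields a jointly measurable version of $\mathbb E[\tfrac{\mathrm{d}}{\mathrm{d}\lambda}X(\cdot,\lambda)\mid\mathcal G](\omega)$. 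Local integrability in $\lambda$ of both $\omega$-almost surely follows from Fubini applied to the local $L^1(\Omega\times\Lambda)$ bounds, using $|\mathbb E[\,\cdot\mid\mathcal G]|\leq \mathbb E[|\cdot|\,\mid\mathcal G]$ together with the tower property.

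Next, for a fixed test function $\varphi\in C^\infty_c(\mathbb{R}^n)$ and a fixed unit direction, the weak differentiability of $X(\omega,\cdot)$ gives, for almost every $\omega$,
\[
\int_{\mathbb R^n}\varphi(\lambda)\,\tfrac{\mathrm{d}}{\mathrm{d}\lambda}X(\omega,\lambda)\,\mathrm{d}\lambda
=-\int_{\mathbb R^n}X(\omega,\lambda)\,\tfrac{\mathrm{d}}{\mathrm{d}\lambda}\varphi(\lambda)\,\mathrm{d}\lambda.
\]
Both sides are well-defined random variables in $L^1(\Omega)$ thanks to $\varphi$ having compact support and the local integrability of $\mathbb E|X|$ and $\mathbb E|\frac{\mathrm d}{\mathrm d\lambda}X|$. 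Taking $\mathbb E[\,\cdot\mid\mathcal G]$ on both sides and then interchanging conditional expectation and the Lebesgue integral in $\lambda$ (this is Fubini for conditional expectation, justified by the finiteness of $\int|\varphi(\lambda)|\,\mathbb E|\tfrac{\mathrm d}{\mathrm d\lambda}X(\cdot,\lambda)|\,\mathrm d\lambda$ and the analogous bound with $\varphi'$) produces
\[
\int_{\mathbb R^n}\varphi(\lambda)\,\mathbb E\!\left[\tfrac{\mathrm{d}}{\mathrm{d}\lambda}X(\cdot,\lambda)\,\Big|\,\mathcal G\right]\mathrm{d}\lambda
=-\int_{\mathbb R^n}\mathbb E[X(\cdot,\lambda)\mid\mathcal G]\,\tfrac{\mathrm{d}}{\mathrm{d}\lambda}\varphi(\lambda)\,\mathrm{d}\lambda\qquad\text{a.s.}
\]

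Finally, choosing a countable family $(\varphi_k)$ of test functions that is dense in $C^\infty_c$ in an appropriate sense (e.g.\ tensor products of one-dimensional bumps with rational centres and widths), the above identity holds simultaneously outside a single null set in $\Omega$. This shows that $\mathbb E[X(\cdot,\lambda)\mid\mathcal G]$ is weakly differentiable in $\lambda$ for a.e.\ $\omega$, with weak derivative given a.s.\ by $\mathbb E[\tfrac{\mathrm d}{\mathrm d\lambda}X(\cdot,\lambda)\mid\mathcal G]$. The principal technical obstacle is the Fubini interchange step: the correct hypothesis under which $\int \mathbb E[\,\cdot\mid\mathcal G]\,\mathrm d\lambda=\mathbb E[\int\cdot\,\mathrm d\lambda\mid\mathcal G]$ holds up to a null set is precisely the local $L^1(\Omega\times\Lambda)$ condition assumed in the statement, so careful bookkeeping of null sets depending on $\varphi$ is what needs the most attention.
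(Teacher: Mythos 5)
Your main step coincides with the paper's own proof: both arguments test the claimed derivative against $\varphi\in C^\infty_c(\mathbb{R}^n)$, use the pathwise weak differentiability of $X$, and interchange the $\lambda$-integral with the conditional expectation, justified precisely by the assumed local integrability of $\mathbb{E}\left[|X(\cdot,\lambda)|\right]$ and $\mathbb{E}\left[\left|\frac{\mathrm{d}}{\mathrm{d}\lambda}X(\cdot,\lambda)\right|\right]$; you are in fact a bit more careful than the paper in collecting the $\varphi$-dependent null sets through a countable family of test functions, a point the paper leaves implicit. Where you genuinely differ is the joint measurability of $(\omega,\lambda)\mapsto\mathbb{E}[X(\cdot,\lambda)|\mathcal{G}](\omega)$: the paper obtains a jointly measurable version in one stroke by observing that, on $\Omega\times B_R(\lambda_0)$ equipped with $\mathbb{P}\otimes\frac{1}{|B_R(\lambda_0)|}\,\mathrm{d}\lambda$, the conditional expectation of $X$ with respect to the product $\sigma$-algebra $\mathcal{G}\otimes\mathcal{L}(B_R(\lambda_0))$ agrees a.e.\ with $\mathbb{E}[X(\cdot,\lambda)|\mathcal{G}](\omega)$, and product-space conditional expectations are jointly measurable by construction. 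Your approximation route can be made to work, but as written it overclaims: joint measurability plus local integrability give approximation by sums $\sum_k Y_k(\omega)\mathbf{1}_{A_k}(\lambda)$ in $L^1(\Omega\times K)$ for compact $K$, not \emph{uniformly in $\lambda$} in $L^1(\Omega)$ — uniform approximation would force the family $\{X(\cdot,\lambda):\lambda\in K\}$ to be totally bounded in $L^1(\Omega)$, which the hypotheses do not guarantee. This is harmless for your purposes: $L^1(\Omega\times K)$ approximation, the $L^1$-contraction property of conditional expectation, and passage to a subsequence converging for a.e.\ $\lambda$ already yield a jointly measurable version of both $\mathbb{E}[X(\cdot,\lambda)|\mathcal{G}]$ and $\mathbb{E}\left[\frac{\mathrm{d}}{\mathrm{d}\lambda}X(\cdot,\lambda)\big|\mathcal{G}\right]$, so you should phrase the approximation that way. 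With that repair your argument is sound; the paper's product-space identification simply buys the measurability with less bookkeeping.
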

\begin{proof}
See Appendix.
\end{proof}

\begin{lem}\label{wd2}
Let $Z:[0,T]\times\Omega\times\mathbb{R}^n\rightarrow\mathbb{R}$ be measurable, s.t.
\begin{itemize}
\item $Z$ is weakly differentiable w.r.t. $\lambda\in\mathbb{R}^n$,
\item $\mathbb{E}\left[\int_0^T\left|Z_s(\cdot,\lambda)\right|\dx s\right]$ and $\mathbb{E}\left[\int_0^T\left|\frac{\dx}{\dx\lambda}Z_s(\cdot,\lambda)\right|\dx s\right]$ are both locally integrable w.r.t. $\lambda$.
\end{itemize}
Then the mapping $X:=\int_0^TZ_s\dx s:\Omega\times\mathbb{R}^n\rightarrow\mathbb{R}$ is measurable, weakly differentiable w.r.t. $\lambda\in\mathbb{R}^n$ and
$\frac{\dx}{\dx\lambda}X(\cdot,\lambda)=\int_0^T\frac{\dx}{\dx\lambda}Z_s(\cdot,\lambda)\dx s$.
\end{lem}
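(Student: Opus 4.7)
The plan is to verify the defining integration-by-parts identity for the weak derivative of $X$ by a Fubini exchange of the time integral with the $\lambda$-integration by parts. Fix an open $\Lambda\subseteq\mathbb{R}^n$ on which the two hypothesized local integrability conditions are posed.

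Measurability and local integrability come first. Joint measurability of $Z$ in $(s,\omega,\lambda)$ together with Fubini yields joint measurability of $X(\omega,\lambda)=\int_0^T Z_s(\omega,\lambda)\dx s$. Using the jointly measurable modification of $\frac{\dx}{\dx\lambda}Z$ guaranteed by the $\limsup$ construction (\ref{fundu}), the candidate derivative $\Xi(\omega,\lambda):=\int_0^T \frac{\dx}{\dx\lambda}Z_s(\omega,\lambda)\dx s$ is also jointly measurable. Tonelli and the two hypotheses then imply that $|X(\omega,\cdot)|$ and $|\Xi(\omega,\cdot)|$ are locally integrable on $\Lambda$ for a.a. $\omega$.

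The heart of the argument is the Fubini swap. Applying the paper's definition of weak differentiability with underlying measure space $\mathcal{M}=[0,T]\times\Omega$ and Fubini on the product measure, I pick a set $\Omega_0\subseteq\Omega$ of full probability such that for every $\omega\in\Omega_0$ the map $Z_s(\omega,\cdot)$ is weakly differentiable with derivative $\frac{\dx}{\dx\lambda}Z_s(\omega,\cdot)$ for almost all $s\in[0,T]$. For such $\omega$ and any $\varphi\in C_c^\infty(\Lambda)$, boundedness of $\varphi$ and $\frac{\dx}{\dx\lambda}\varphi$ with compact support combined with the local integrability justifies both Fubini swaps in the chain
\begin{align*}
\int_\Lambda X(\omega,\lambda)\frac{\dx}{\dx\lambda}\varphi(\lambda)\dx\lambda
&=\int_0^T\int_\Lambda Z_s(\omega,\lambda)\frac{\dx}{\dx\lambda}\varphi(\lambda)\dx\lambda\dx s\\
&=-\int_0^T\int_\Lambda \frac{\dx}{\dx\lambda}Z_s(\omega,\lambda)\varphi(\lambda)\dx\lambda\dx s\\
&=-\int_\Lambda \Xi(\omega,\lambda)\varphi(\lambda)\dx\lambda,
\end{align*}
where the middle equality invokes the pointwise weak differentiability of $Z_s(\omega,\cdot)$ for a.a. $s$, applied under the inner integral.

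The main obstacle is the routine bookkeeping of null sets: the full-measure subset of $\omega$ on which the identity above holds a priori depends on the test function $\varphi$. I would handle this by running the argument along a fixed countable subset of $C_c^\infty(\Lambda)$ dense in the $C^1$-topology over each compact exhaustion of $\Lambda$, and then extending to arbitrary $\varphi\in C_c^\infty(\Lambda)$ by continuity of both sides of the identity in this topology (which follows from the local integrability of $X$, $Z_s$, and the derivatives).
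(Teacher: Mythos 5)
Your argument is correct, but it takes a different route from the paper. The paper proves Lemma \ref{wd2} by a reduction: it regards $\frac{1}{T}\int_0^T\cdot\,\dx s$ as a conditional expectation with respect to $\mathcal{G}=\{\emptyset,[0,T]\}\otimes\mathcal{F}$ on the product probability space $\left([0,T]\times\Omega,\mathcal{L}([0,T])\otimes\mathcal{F},\frac{1}{T}\dx t\otimes\mathbb{P}\right)$ and then simply invokes Lemma \ref{wd1}, so all the Fubini/integration-by-parts work is done once, in the proof of that lemma. You instead verify the defining identity directly, which is essentially the same computation as in the appendix proof of Lemma \ref{wd1}, carried out from scratch; this buys a self-contained argument with the measurability and null-set bookkeeping made explicit (including the correct use of the measurable modification of $\frac{\dx}{\dx\lambda}Z$ from the $\limsup$ construction), at the cost of not reusing the already established lemma. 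One small remark: your final step with a countable dense family of test functions is superfluous. Once you fix $\omega$ in the full-measure set $\Omega_0$ on which, for a.e.\ $s$, $Z_s(\omega,\cdot)$ is weakly differentiable and on which $\int_0^T\int_K|Z_s(\omega,\lambda)|\dx\lambda\dx s$ and $\int_0^T\int_K\left|\frac{\dx}{\dx\lambda}Z_s(\omega,\lambda)\right|\dx\lambda\dx s$ are finite for every compact $K$ of a countable exhaustion (this is where the null sets must be collected, via Tonelli and the two local integrability hypotheses), the exceptional $s$-set in the middle equality comes only from the weak differentiability of $Z_s(\omega,\cdot)$ and does not depend on $\varphi$; hence the chain of equalities holds for every $\varphi\in C_c^\infty(\Lambda)$ simultaneously, and no density argument is needed.
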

\begin{proof}
Define a new probability space $\left([0,T]\times \Omega,\mathcal{L}([0,T])\otimes\mathcal{F},\frac{1}{T} dt|_{\mathcal{B}[0,T]}\otimes\mathbb{P}\right)$, define $\mathcal{G}:=\{\emptyset,[0,T]\}\otimes\mathcal{F}$ and apply Lemma \ref{wd1}. Here $\mathcal{L}([0,T])$ is the $\sigma$-Algebra of Lebesgue measurable subsets of $[0,T]$.
\end{proof}


\begin{lem}\label{wd3}
Let $Z:[0,T]\times\Omega\times\mathbb{R}^n\rightarrow\mathbb{R}^d$ be progressively measurable, s.t.
\begin{itemize}
\item $Z$ is weakly differentiable w.r.t. $\lambda\in\mathbb{R}^n$,
\item $\mathbb{E}\left[\int_0^T\left|Z_s(\cdot,\lambda)\right|^2\dx s\right]$ and $\mathbb{E}\left[\int_0^T\left|\frac{\dx}{\dx\lambda}Z_s(\cdot,\lambda)\right|^2\dx s\right]$ are both locally integrable w.r.t. $\lambda$.
\end{itemize}
Then the mapping $X:=\int_0^TZ_s^\top\dx W_s:\Omega\times\mathbb{R}^n\rightarrow\mathbb{R}$ is measurable, weakly differentiable w.r.t. $\lambda\in\mathbb{R}^n$ and
$\frac{\dx}{\dx\lambda}X(\cdot,\lambda)=\int_0^T\frac{\dx}{\dx\lambda}Z_s(\cdot,\lambda)^\top\dx W_s$.
\end{lem}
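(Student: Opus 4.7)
The plan is to mirror the proof of Lemma \ref{wd2}, replacing classical Fubini with a stochastic Fubini theorem for It\^o integrals. Concretely, the weak-derivative identity for $X$ will be checked by pulling the $\dx\lambda$-integral against a test function in and out of the $\dx W_s$-integral.

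First I would fix a jointly, progressively measurable version of $\frac{\dx}{\dx\lambda}Z$; this is obtained from the componentwise $\limsup$ of finite-difference quotients described in the preliminaries, which preserves progressive measurability in $(s,\omega)$. A jointly measurable version of $X(\omega,\lambda)=\int_0^T Z_s(\omega,\lambda)^\top\dx W_s$ is obtained similarly, and It\^o's isometry together with the hypothesis makes $\lambda\mapsto\mathbb{E}[X(\cdot,\lambda)^2]$ locally integrable, so $X(\omega,\cdot)$ is locally integrable for a.a.\ $\omega$.

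Fix $\varphi\in C_c^\infty(\mathbb{R}^n)$ with compact support $K$ and a vector $v\in\mathbb{R}^n$. The core computation runs in three steps: (i) apply stochastic Fubini to
\[
\int_K\varphi(\lambda)\left(\int_0^T\left(\frac{\dx}{\dx\lambda}Z_s(\cdot,\lambda)v\right)^\top\dx W_s\right)\dx\lambda
\]
and pull the $\dx\lambda$-integral inside the $\dx W_s$-integral; (ii) for a.e.\ $(s,\omega)$ apply the definition of the weak derivative of $Z_s(\omega,\cdot)$ to rewrite the inner integrand as $-\int_K(\nabla\varphi(\lambda)\cdot v)\,Z_s(\cdot,\lambda)\,\dx\lambda$; (iii) apply stochastic Fubini once more to pull the $\dx\lambda$-integral back out, producing $-\int_K(\nabla\varphi(\lambda)\cdot v)\,X(\cdot,\lambda)\,\dx\lambda$. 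Varying $\varphi$ over a countable dense family and $v$ over a basis of $\mathbb{R}^n$ then identifies the weak derivative of $X$ with the claimed expression.

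The main obstacle is invoking stochastic Fubini in a form matching the measurability and integrability setup here; the required hypotheses $\int_K\mathbb{E}\int_0^T|Z_s|^2\dx s\,\dx\lambda<\infty$ and its analogue for $\frac{\dx}{\dx\lambda}Z$ follow directly from the local-$L^2$ assumptions of the lemma. If a self-contained argument is preferred, one approximates $Z$ and $\frac{\dx}{\dx\lambda}Z$, viewed as $\lambda$-indexed processes in $L^2(\dx\mathbb{P}\otimes\dx s)$, by elementary step-processes in $\lambda$ for which the interchange is trivial, and passes to the limit via It\^o's isometry and dominated convergence on $K$.
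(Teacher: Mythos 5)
Your proposal is correct and follows essentially the same route as the paper: test against $\varphi\in C^\infty_c(\mathbb{R}^n)$, interchange the $\dx\lambda$- and $\dx W_s$-integrals, apply the weak-derivative identity of $Z_s(\omega,\cdot)$ pointwise in $(s,\omega)$, and interchange back. The only cosmetic difference is that the paper justifies the two interchanges by ``continuity and linearity of the stochastic integral'' (i.e.\ exactly the step-process approximation you offer as a fallback) rather than by citing a stochastic Fubini theorem by name, and it works coordinatewise in $\lambda_i$ instead of with a direction $v$.
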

\begin{proof}
See Appendix.
\end{proof}

Conversely, we can also show:

\begin{lem}\label{wd4}
Let $X:\Omega\times\mathbb{R}^n\rightarrow\mathbb{R}$ be measurable, s.t.
\begin{itemize}
\item $X$ is weakly differentiable w.r.t. $\lambda$,
\item $\mathbb{E}\left[\left|X(\cdot,\lambda)\right|^2\right]$ and $\mathbb{E}\left[\left|\frac{\dx}{\dx\lambda}X(\cdot,\lambda)\right|^2\right]$ are both locally integrable w.r.t. $\lambda$.
\end{itemize}
Then the unique progressively measurable process $Z:\Omega\times[0,T]\times\mathbb{R}^n\rightarrow\mathbb{R}^d$ s.t. $X=\mathbb{E}[X]+\int_0^TZ_s^\top\dx W_s$ is weakly differentiable w.r.t. $\lambda$ and
$\frac{\dx}{\dx\lambda}X(\cdot,\lambda)=\mathbb{E}\left[\frac{\dx}{\dx\lambda}X(\cdot,\lambda)\right]+\int_0^T\frac{\dx}{\dx\lambda}Z_s(\cdot,\lambda)^\top\dx W_s$.
\end{lem}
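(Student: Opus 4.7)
The plan is to identify, via a duality argument against test functions and the stochastic Fubini theorem, the martingale representation integrand of $\frac{\dx}{\dx\lambda}X(\cdot,\lambda)$ with the weak $\lambda$-derivative of $Z$. Since $\mathbb{E}[|X(\cdot,\lambda)|^2]$ and $\mathbb{E}[|\frac{\dx}{\dx\lambda}X(\cdot,\lambda)|^2]$ are locally integrable in $\lambda$, on a set of $\lambda$ of full measure both $X(\cdot,\lambda)$ and each component $\partial_i X(\cdot,\lambda)$, $i=1,\ldots,n$, lie in $L^2(\Omega)$. For such $\lambda$ the martingale representation theorem yields unique progressively measurable integrands $Z_s(\cdot,\lambda)$ and $\tilde Z^{(i)}_s(\cdot,\lambda)$ with
$$X(\cdot,\lambda)=\mathbb{E}[X(\cdot,\lambda)]+\int_0^T Z_s(\cdot,\lambda)^\top\dx W_s,\qquad \partial_i X(\cdot,\lambda)=\mathbb{E}[\partial_i X(\cdot,\lambda)]+\int_0^T \tilde Z^{(i)}_s(\cdot,\lambda)^\top\dx W_s.$$
A standard measurable-selection argument, analogous to the construction of a measurable modification of $\frac{\dx}{\dx\lambda}X$ in the text preceding Lemma \ref{chainrule}, produces jointly $(s,\omega,\lambda)$-measurable progressive versions of both $Z$ and $\tilde Z^{(i)}$.

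Now fix $\varphi\in C^\infty_c(\mathbb{R}^n)$ and $i\in\{1,\ldots,n\}$, and consider the $L^2(\Omega)$-random variable
$$A_\varphi:=\int_{\mathbb{R}^n}\partial_i\varphi(\lambda)X(\cdot,\lambda)\dx\lambda=-\int_{\mathbb{R}^n}\varphi(\lambda)\partial_i X(\cdot,\lambda)\dx\lambda,$$
where the second equality is the almost sure form of the defining property of the weak derivative. Inserting the two martingale representations and exchanging the Lebesgue $\lambda$-integral with the It\^o integral through a stochastic Fubini theorem, whose hypotheses are supplied by It\^o's isometry, the local $L^2$-integrability, and the compact support of $\varphi$ (mirroring the argument in the proof of Lemma \ref{wd3}), yields two expressions for $A_\varphi$ of the form $\mathbb{E}[A_\varphi]+\int_0^T(\cdots)^\top\dx W_s$. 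Uniqueness of the martingale representation integrand then forces
$$\int_{\mathbb{R}^n}\partial_i\varphi(\lambda)Z_s(\cdot,\lambda)\dx\lambda=-\int_{\mathbb{R}^n}\varphi(\lambda)\tilde Z^{(i)}_s(\cdot,\lambda)\dx\lambda\qquad\textrm{for $\dx t\otimes\dx\mathbb{P}$-a.e. }(s,\omega).$$

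Choosing a countable family $(\varphi_k)\subset C^\infty_c(\mathbb{R}^n)$ whose span is dense in the topology required to test weak derivatives, one null set in $(s,\omega)$ handles every $\varphi_k$ simultaneously. Outside that null set, the displayed identity is precisely the assertion that $\tilde Z^{(i)}_s(\omega,\cdot)$ is the weak $\lambda_i$-derivative of $Z_s(\omega,\cdot)$; assembling over $i$ gives $\frac{\dx}{\dx\lambda}Z_s=\tilde Z_s$, and substituting back into the martingale representation of $\frac{\dx}{\dx\lambda}X$ produces the claimed formula. The principal obstacle is the stochastic Fubini exchange under only local $L^2$-integrability of the parameter family, which nevertheless goes through by truncation of the $\lambda$-support and $L^2$-approximation exactly as in the proof of Lemma \ref{wd3}; the measurable selection and the countable dense test family are the minor accompanying technicalities.
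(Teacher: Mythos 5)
Your proposal is correct and follows essentially the same route as the paper's own proof: apply the It\^o representation theorem to $X(\cdot,\lambda)$ and to $\frac{\dx}{\dx\lambda}X(\cdot,\lambda)$ separately, then identify the second integrand as the weak $\lambda$-derivative of the first by testing against $\varphi\in C^\infty_c(\mathbb{R}^n)$, interchanging the $\lambda$-integral with the stochastic integral, and invoking uniqueness of the representation integrand. Your additional care with the $(s,\omega,\lambda)$-measurable versions and the countable family of test functions (to get a single null set in $(s,\omega)$) only makes explicit technical points the paper leaves implicit.
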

\begin{proof}
See Appendix.
\end{proof}

\begin{lem}\label{wd5}
Let $X:\Omega\times\mathbb{R}^n\rightarrow\mathbb{R}$ be measurable and $V:\Omega\times[0,T]\times\mathbb{R}^n\rightarrow\mathbb{R}$ be progressively measurable s.t.
\begin{itemize}
\item $X$ and $V$ are weakly differentiable w.r.t. $\lambda\in\mathbb{R}^n$,
\item $\mathbb{E}\left[\left|X(\cdot,\lambda)\right|^2\right]$ and $\mathbb{E}\left[\left|\frac{\dx}{\dx\lambda}X(\cdot,\lambda)\right|^2\right]$ are both locally integrable w.r.t. $\lambda$,
\item $\mathbb{E}\left[\left(\int_0^T\left|V_s(\cdot,\lambda)\right|\dx s\right)^2\right]$ and $\mathbb{E}\left[\left(\int_0^T\left|\frac{\dx}{\dx\lambda}V_s(\cdot,\lambda)\right|\dx s\right)^2\right]$ are both locally integrable w.r.t. $\lambda$.
\end{itemize}
Then there exist unique progressive processes $Y,Z:\Omega\times[0,T]\times\mathbb{R}^n\rightarrow\mathbb{R},\mathbb{R}^d$ s.t.
$$ Y_t=X-\int_t^TV_s\dx s-\int_t^TZ_s^\top\dx W_s, $$
$Y$ and $Z$ are both weakly differentiable w.r.t. $\lambda$ and
$$ \frac{\dx}{\dx\lambda}Y_t=\frac{\dx}{\dx\lambda}X-\int_t^T\frac{\dx}{\dx\lambda}V_s\dx s-\int_t^T\frac{\dx}{\dx\lambda}Z_s^\top\dx W_s. $$
\end{lem}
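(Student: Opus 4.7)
The plan is to reduce Lemma \ref{wd5} to the previously stated Lemmas \ref{wd2}--\ref{wd4} by absorbing the drift $V$ into a single terminal random variable and then invoking the martingale-representation statement of Lemma \ref{wd4}. First I would set $\tilde{X}:=X-\int_0^T V_s\dx s$. By Lemma \ref{wd2} applied to $V$, the integral $\int_0^T V_s\dx s$ is weakly differentiable in $\lambda$ with derivative $\int_0^T\frac{\dx}{\dx\lambda}V_s\dx s$; combined with the hypothesis $|\int_0^T V_s\dx s|^2\le(\int_0^T|V_s|\dx s)^2$ this shows that both $\mathbb{E}[|\tilde{X}(\cdot,\lambda)|^2]$ and $\mathbb{E}[|\frac{\dx}{\dx\lambda}\tilde{X}(\cdot,\lambda)|^2]$ are locally integrable in $\lambda$, so $\tilde X$ satisfies the hypotheses of Lemma \ref{wd4}.

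Applying Lemma \ref{wd4} to $\tilde{X}$ produces a unique progressively measurable $Z$ such that $\tilde{X}=\mathbb{E}[\tilde{X}]+\int_0^T Z_s^\top\dx W_s$, together with the weak-derivative formula $\frac{\dx}{\dx\lambda}\tilde{X}=\mathbb{E}[\frac{\dx}{\dx\lambda}\tilde{X}]+\int_0^T\frac{\dx}{\dx\lambda}Z_s^\top\dx W_s$. I then \emph{define} $Y_t:=X-\int_t^T V_s\dx s-\int_t^T Z_s^\top\dx W_s$; a direct manipulation using the representation of $\tilde X$ yields $Y_t=\mathbb{E}[\tilde X\,|\,\mathcal{F}_t]+\int_0^t V_s\dx s$, from which progressive measurability (after taking a continuous modification of the martingale) and the desired backward equation are immediate. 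Weak differentiability of $Y_t$ at each fixed $t$ then follows by applying Lemma \ref{wd2} to the integrand $V\mathbf{1}_{[t,T]}$ and Lemma \ref{wd3} to $Z\mathbf{1}_{[t,T]}$, and summing the three contributions gives exactly the claimed identity for $\frac{\dx}{\dx\lambda}Y_t$.

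Uniqueness is handled by a short standard argument: if $(Y,Z)$ and $(Y',Z')$ both satisfy the relation, their difference satisfies $Y_t-Y'_t=-\int_t^T(Z_s-Z'_s)^\top\dx W_s$; since the left-hand side is $\mathcal{F}_t$-measurable while the right-hand side has vanishing conditional expectation given $\mathcal{F}_t$, we get $Y\equiv Y'$, and then It\^o's isometry forces $Z=Z'$ in $L^2(\dx t\otimes\dx\mathbb{P})$. The only genuinely delicate point is verifying that the hypotheses of Lemma \ref{wd3} hold for the integrand $Z$, in particular local square-integrability of $\mathbb{E}[\int_0^T|\frac{\dx}{\dx\lambda}Z_s|^2\dx s]$. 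This is precisely what Lemma \ref{wd4} delivers via It\^o's isometry applied to $\frac{\dx}{\dx\lambda}\tilde X-\mathbb{E}[\frac{\dx}{\dx\lambda}\tilde X]$; once this is in hand, the remainder of the argument is a mechanical assembly of the earlier lemmas.
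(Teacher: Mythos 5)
Your proposal is correct and follows essentially the same route as the paper: absorb the drift into the terminal variable $X-\int_0^T V_s\dx s$, invoke Lemma \ref{wd4} for the representation and the differentiability of $Z$, and then assemble the identity for $\frac{\dx}{\dx\lambda}Y_t$ from Lemmas \ref{wd2} and \ref{wd3}. The only cosmetic difference is that the paper first defines $Y_t$ as the conditional expectation $\mathbb{E}\left[X-\int_t^T V_s\dx s\,\middle|\,\mathcal{F}_t\right]$ and differentiates it via Lemma \ref{wd1}, whereas you define $Y$ by the desired formula and verify; your explicit uniqueness argument and the It\^o-isometry check of the hypotheses of Lemma \ref{wd3} are fine additions that the paper leaves implicit.
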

\begin{proof}
See Appendix.
\end{proof}

Finally, we will need the following stability result.

\begin{lem}\label{weakderivexist}
Let $(\mathcal{M},\mathcal{A},\rho)$ be some finite and complete measure space and let $\Lambda\subseteq\mathbb{R}^N$ be open. Let $(X_i)_{i\in\mathbb{N}}$ be a sequence of measurable real valued maps on $\Lambda\times\mathcal{M}$ s.t. $X_i(\cdot,\omega)$ has all weak derivatives up to order $\delta\in\mathbb{N}$ for almost all $\omega\in\mathcal{M}$ and s.t. there exists a constant $C<\infty$ with
$$\sum_{1\leq |\alpha|\leq\delta}\int_{\mathcal{M}}|D_\lambda^\alpha X_i(\lambda,\cdot)|^2\dx\rho\leq C, $$
for almost all $\lambda\in\Lambda$ and all $i\in\mathbb{N}$, where $\alpha\in\mathbb{N}^N$ is a multi-index.\\
Assume further that there exists a real valued map $X$ on $\Lambda\times\mathcal{M}$ such that $\lim_{i\rightarrow\infty}X_i(\lambda,\cdot)=X(\lambda,\cdot)$ in $\mathcal{L}^2$ for almost all $\lambda\in\Lambda$. \\
Then $X$ is measurable and $X(\cdot,\omega)$ has all weak derivatives up to order $\delta$ for almost all $\omega\in\mathcal{M}$ and satisfies $\sum_{1\leq|\alpha|\leq\delta}\int_{\mathcal{M}}|D_\lambda^\alpha X(\lambda,\cdot)|^2\dx\rho\leq C$ for almost all $\lambda\in\Lambda$.
\end{lem}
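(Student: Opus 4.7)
The plan is to obtain the required weak derivatives of $X$ as weak $L^2$-limits of the known derivatives $D_\lambda^\alpha X_i$, and to identify them by passing to the limit in the integration-by-parts identity satisfied by each $X_i$. The key technical step---and I expect the main obstacle---is upgrading the pointwise-in-$\lambda$ $L^2(\mathcal{M})$-convergence $X_i(\lambda,\cdot)\to X(\lambda,\cdot)$ to strong $L^2$-convergence on bounded slabs $U\times\mathcal{M}$, which I plan to achieve via an equi-Lipschitz estimate derived from the bound on $\nabla_\lambda X_i$.

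I would fix a bounded open $U$ with $\overline{U}$ a compact subset of $\Lambda$. Tonelli applied to the standing hypothesis gives $\sum_{1\leq|\alpha|\leq\delta}\|D_\lambda^\alpha X_i\|_{L^2(U\times\mathcal{M})}^2\leq C|U|$, so each sequence $(D_\lambda^\alpha X_i)_i$ is bounded in the Hilbert space $L^2(U\times\mathcal{M})$. Banach-Alaoglu together with a diagonal extraction over the finitely many $\alpha$ with $1\leq|\alpha|\leq\delta$ and over a countable exhaustion $U_k\nearrow\Lambda$ produces a subsequence for which $D_\lambda^\alpha X_i\rightharpoonup F_\alpha$ weakly in $L^2(U_k\times\mathcal{M})$ for every $k$ and $\alpha$. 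Weak lower semi-continuity of the norm together with Lebesgue's differentiation theorem then yields the pointwise bound $\sum_\alpha\int_{\mathcal{M}}|F_\alpha(\lambda,\cdot)|^2\dx\rho\leq C$ for almost every $\lambda$.

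Next comes the crucial step. For almost every $\omega$ and almost every pair $\lambda_1,\lambda_2\in U$, equation (\ref{fundu}) applied to $X_i(\cdot,\omega)$ along the segment from $\lambda_2$ to $\lambda_1$, combined with Minkowski's integral inequality in $\omega$ and the hypothesis $\|\nabla_\lambda X_i(\lambda,\cdot)\|_{L^2(\mathcal{M})}\leq C^{1/2}$ for a.a.\ $\lambda$, gives
$$\|X_i(\lambda_1,\cdot)-X_i(\lambda_2,\cdot)\|_{L^2(\mathcal{M})}\leq C^{1/2}|\lambda_1-\lambda_2|,$$
uniformly in $i$. Since $X_i(\lambda_0,\cdot)$ converges in $L^2(\mathcal{M})$ at some $\lambda_0\in U$, the sequence $\|X_i(\lambda_0,\cdot)\|_{L^2(\mathcal{M})}$ is bounded, and the Lipschitz estimate then controls $\|X_i(\lambda,\cdot)\|_{L^2(\mathcal{M})}$ uniformly in $i$ and $\lambda\in U$. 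Letting $i\to\infty$ at two good base points transfers the same Lipschitz estimate to $X$, so $\|X_i(\lambda,\cdot)-X(\lambda,\cdot)\|_{L^2(\mathcal{M})}^2$ is dominated by a constant on $U$ while converging to $0$ a.e.\ $\lambda$, and dominated convergence upgrades the assumed convergence to $X_i\to X$ strongly in $L^2(U\times\mathcal{M})$.

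Strong convergence of $X_i$ paired with weak convergence of $D_\lambda^\alpha X_i$ then makes the identification routine. For any $\varphi\in C_c^\infty(U)$ and any $A\in\mathcal{A}$, Fubini and the weak differentiability of $X_i(\cdot,\omega)$ give
$$\int_A\int_U D_\lambda^\alpha X_i\,\varphi\dx\lambda\dx\rho=(-1)^{|\alpha|}\int_A\int_U X_i\,D_\lambda^\alpha\varphi\dx\lambda\dx\rho,$$
and letting $i\to\infty$ yields the same identity with $(X,F_\alpha)$ in place of $(X_i,D_\lambda^\alpha X_i)$. Varying $A$ over $\mathcal{A}$ forces $F_\alpha(\cdot,\omega)=D_\lambda^\alpha X(\cdot,\omega)$ for almost every $\omega$, so $X$ has the claimed weak derivatives with the bound inherited from the weak-compactness step. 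Joint measurability of $X$ comes for free from the strong $L^2$-convergence, after passing to a further almost-everywhere convergent subsequence.
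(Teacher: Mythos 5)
Your argument is correct and essentially the paper's own: the same FTC/Minkowski equi-Lipschitz bound in $\lambda$ for $\|X_i(\lambda,\cdot)\|_{L^2(\mathcal{M})}$, weak $L^2$-compactness of the $D^\alpha_\lambda X_i$ on bounded slabs, passage to the limit in the integration-by-parts identity (dominated convergence on the $X_i$-side, weak convergence on the derivative side), and a Lebesgue-differentiation argument for the pointwise-in-$\lambda$ bound, with measurability recovered from the $L^2$-Cauchy property and completeness of the product space. The only (easily repaired) slip is geometric: the segment estimate from Lemma \ref{fundulem} requires the relevant segments (in fact suitable closed balls) to stay inside $\Lambda$, so your sets $U$ and the exhaustion $U_k$ should be taken to be balls with closure in $\Lambda$, as in the paper, rather than arbitrary bounded open sets with compact closure; a countable family of such balls covering $\Lambda$ suffices, since all conclusions are local and almost-everywhere in $\lambda$.
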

\begin{proof}
See Appendix.
\end{proof}

\section{Local Existence and Uniqueness} \label{local}

We denote by $L_{\sigma,z}$ the Lipschitz constant of $\sigma$ w.r.t. the dependence on the last component $z$ (and w.r.t. the Frobenius norms on $\mathbb{R}^{m\times d}$ and $\mathbb{R}^{n\times d}$).\\
By $L_{\sigma,z}^{-1}=\frac{1}{L_{\sigma,z}}$ we mean $\frac{1}{L_{\sigma,z}}$ if $L_{\sigma,z}>0$ and $\infty$ otherwise.

In the following we need further notation. For an integrable real valued random variable $X$ the expression $\mathbb{E}_t[X]$ refers to $\mathbb{E}[X|\mathcal{F}_t]$, while $\mathbb{E}_{\hat{t},\infty}[X]$ refers to $\esssup\,\mathbb{E}[X|\mathcal{F}_t]$, which might be $\infty$ or even $-\infty$, but is always well defined as the infimum of all constants $c\in[-\infty,\infty]$ such that $\mathbb{E}[X|\mathcal{F}_t]\leq c$ a.s.. \\
As usual $\|X\|_\infty$ refers to the essential supremum of $|X|$.

\begin{thm}\label{locexist}
Let
\begin{itemize}
\item $\mu,\sigma,f$ be Lipschitz continuous in $(x,y,z)$ with Lipschitz constant $L$ s.t.
\item $\left\|\left(|\mu|+|f|+|\sigma|\right)(\cdot,\cdot,0,0,0)\right\|_{\infty}<\infty$.
\item $\xi: \Omega\times\mathbb{R}^n\rightarrow \mathbb{R}^m$ be measurable s.t. $\|\xi(\cdot,0)\|_{\infty}<\infty$ and $L_{\xi,x}<L_{\sigma,z}^{-1}$.
\end{itemize}
Then there exists a time $t\in[0,T)$ such that $\fbsde (\xi,(\mu,\sigma,f))$ has a unique (up to modification) decoupling field $u$ on $[t,T]$ with $L_{u,x}<L_{\sigma,z}^{-1}$ and $\|u(\cdot,\cdot,0)\|_{\infty}<\infty$.
\end{thm}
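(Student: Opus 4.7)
The plan is a contractive Picard--Lindel\"of iteration on a short interval $[t,T]$. Fix an auxiliary constant $L^{*}\in(L_{\xi,x},L_{\sigma,z}^{-1})$ and a constant $K>\|\xi(\cdot,0)\|_{\infty}$. Consider the complete metric space $\mathcal{S}_{t}$ of families $(X^{x},Y^{x},Z^{x})_{x\in\mathbb{R}^{n}}$ of progressively measurable processes on $[t,T]$ indexed by an initial position $x\in\mathbb{R}^{n}$, required to be Lipschitz in $x$ (with Lipschitz constant of $Y^{x}$ at most $L^{*}$) and suitably bounded at $x=0$, equipped with an $L^{2}$-type norm. Define the Picard map $\Phi$ by the classical forward-backward decoupling step: given $(X^{(k),x},Y^{(k),x},Z^{(k),x})$, let $X^{(k+1),x}$ solve the forward SDE with the frozen $(Y^{(k),x},Z^{(k),x})$ inside the coefficients of $\mu$ and $\sigma$, and then let $(Y^{(k+1),x},Z^{(k+1),x})$ be the unique $L^{2}$ solution of the BSDE with driver $f(r,X^{(k+1),x}_{r},\cdot,\cdot)$ and terminal condition $\xi(X^{(k+1),x}_{T})$. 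Each step is well-posed by classical Lipschitz SDE/BSDE theory.

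For $T-t$ sufficiently small, standard BDG, It\^o and Gronwall estimates show that $\Phi$ maps $\mathcal{S}_{t}$ into itself and acts as a contraction there. Banach's fixed point theorem then provides a unique triple $(X^{x},Y^{x},Z^{x})_{x}$; running the same construction at each starting time $t_{1}\in[t,T]$ produces a doubly indexed family, and the decoupling field is defined by
\[
u(t_{1},\omega,x)\;:=\;Y^{t_{1},x}_{t_{1}}(\omega),
\]
with a progressively measurable Lipschitz modification chosen as in the second lemma of Section~\ref{prelim}. The decoupling condition $Y_{s}=u(s,X_{s})$ on generic initial data follows from the uniqueness of the inner FBSDE, and uniqueness of $u$ within the class $L_{u,x}<L_{\sigma,z}^{-1}$ is inherited from the contraction property applied to two putative solutions.

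The key quantitative point is the Lipschitz bound $L_{u,x}\leq L^{*}<L_{\sigma,z}^{-1}$. To establish it I would differentiate the fixed-point relations with respect to the parameter $x$ in the weak sense, invoking Lemmas~\ref{wd1}--\ref{wd5} to interchange $\frac{\dx}{\dx x}$ with the integrals in $s$ and $W$. The resulting linear variational FBSDE has terminal Lipschitz modulus $L_{\xi,x}<L^{*}$; a dominating-ODE bound in the spirit of \cite{jfzhang} then shows that on a sufficiently short interval the propagated Lipschitz modulus of $x\mapsto Y^{x}$ grows by only $O(T-t)$ and therefore stays strictly below $L^{*}$, the closure of the estimate relying precisely on $L^{*}\cdot L_{\sigma,z}<1$. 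The boundedness $\|u(\cdot,\cdot,0)\|_{\infty}<\infty$ follows from the at-zero bounds on $\mu,\sigma,f,\xi$ combined with the same SDE/BSDE estimates.

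\textbf{Main obstacle.} The hardest part is keeping the Lipschitz constant of $u$ in $x$ strictly below $L_{\sigma,z}^{-1}$ throughout the iteration. The strict hypothesis $L_{\xi,x}<L_{\sigma,z}^{-1}$ leaves a quantitative margin $L^{*}-L_{\xi,x}>0$ that the growth of the Lipschitz modulus over $[t,T]$ must not exhaust, and quantifying this growth via the variational FBSDE together with a dominating-ODE bound is the technical heart of the argument: it is precisely here that the interplay between $L_{\xi,x}$, $L_{\sigma,z}$, and the length $T-t$ is delicate. The inner Picard step, the contraction estimate, and the construction of a progressively measurable modification of $u$ are comparatively routine once the right function spaces are in place.
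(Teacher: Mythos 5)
Your overall strategy coincides with the paper's: a Picard--Lindel\"of contraction on a short interval, the definition $u(t_1,\cdot,x):=Y^{t_1,x}_{t_1}$, a variational (weak-derivative) argument for the Lipschitz bound with the margin coming from $L_{\xi,x}\cdot L_{\sigma,z}<1$, and a measurable Lipschitz modification as in Section \ref{prelim}. (That your Picard step solves a full BSDE while the paper's step only uses martingale representation with the old $(Y,Z)$ frozen in the driver is immaterial.) Two steps, however, are genuine gaps as written. First, an unconditional ``$L^2$-type norm'' cannot deliver the conclusion $L_{u,x}<L_{\sigma,z}^{-1}$: this is an almost sure, pointwise-in-$\omega$ Lipschitz bound on the $\mathcal{F}_{t_1}$-measurable random variable $u(t_1,\cdot,x)=Y^{t_1,x}_{t_1}$, and an estimate of the form $\mathbb{E}\big[|Y^{x}_{t_1}-Y^{x'}_{t_1}|^2\big]\leq (L^{*})^2|x-x'|^2$ does not imply it. You partly anticipate this by building the a.s. Lipschitz constraint into $\mathcal{S}_t$, but the invariance of that constraint under $\Phi$, and the bounds on $\frac{\dx}{\dx x}Y,\frac{\dx}{\dx x}Z$, must then be proved with \emph{conditional} estimates; the paper works throughout with norms built from $\esssup$ of conditional second moments ($\mathbb{E}_{\hat{t},\infty}$), which simultaneously turn the contraction bounds into a.s. bounds at the initial time and make the construction applicable to arbitrary random $\mathcal{F}_{t_1}$-measurable initial values, as the definition of a decoupling field demands. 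Relatedly, if ``differentiate the fixed-point relations'' means differentiating the limit equations, that is circular: differentiability of the limit is what has to be proved. One differentiates the iterates and passes to the limit via a stability result such as Lemma \ref{weakderivexist} (or, equivalently, verifies at each Picard step that the Lipschitz-in-$x$ constraint is preserved).

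Second, uniqueness of $u$ in the class $L_{u,x}<L_{\sigma,z}^{-1}$, $\|u(\cdot,\cdot,0)\|_{\infty}<\infty$ is not ``inherited from the contraction property applied to two putative solutions.'' A competing decoupling field only furnishes \emph{some} processes $(X,Y,Z)$ satisfying the equations and the decoupling condition; nothing in the definition places them in the space where the fixed point is unique, so the contraction cannot be applied to them directly. The paper closes this with a localization argument: for a stopping time $\tau$ the stopped triplet solves the FBSDE with coefficients multiplied by $\mathbf{1}_{\{\cdot\leq\tau\}}$ and terminal condition $u(\tau,\cdot)$, which still satisfies $L_{u(\tau,\cdot),x}<L_{\sigma,z}^{-1}$, so uniqueness of solutions in the good space applies on $[t,\tau]$, and letting $\tau$ exhaust the interval identifies the processes and hence the decoupling fields. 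Some argument of this type is indispensable for the uniqueness assertion of Theorem \ref{locexist}; with these two repairs your outline matches the paper's proof.
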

\begin{proof}
Let for some $t\in[0,T)$, which will be specified later, $X_t: \mathbb{R}^n\times\Omega\longrightarrow\mathbb{R}^{n}$ be a $\mathcal{B}(\mathbb{R}^n)\otimes\mathcal{F}_t$ - measurable function s.t. $X_t(\cdot,\omega)$ is weakly differentiable for almost all $\omega\in\Omega$ and
$$\esssup_{\lambda\in\mathbb{R}^n}\sup_{v\in S^{n-1}}\mathbb{E}_{\hat{t},\infty}\left[\left|\frac{\dx}{\dx \lambda}X_t(\lambda,\cdot)\right|^2_v\right]<\infty,$$
for some $\hat{t}\in [0,t]$. \\
Assume furthermore that $\mathbb{E}_{\hat{t},\infty}\left[|X_t(\lambda,\cdot)|^2\right]<\infty$ for all $\lambda\in\mathbb{R}^{n}$. \\
We want to solve the coupled FBSDE
\begin{itemize}
\item $X_s=X_t+\int_{t}^s\mu(r,X_r,Y_r,Z_r)\dx r+\int_{t}^s\sigma(r,X_r,Y_r,Z_r)\dx W_r,$
\item $Y_s=\xi(X_T)-\int_{s}^{T}f(r,X_r,Y_r,Z_r)\dx r-\int_{s}^{T}Z_r\dx W_r,$
\end{itemize}
which means that $X,Y,Z$ would be functions of $\lambda,\omega$ and $s$ and the two equations would hold for almost all $(\lambda,\omega,s)\in\mathbb{R}^{n}\times\Omega\times [t,T]$. \\
Let $\mathbb{G}_{\hat{t}}$ be the space of all progressive $\mathbb{R}^{n}\times\mathbb{R}^{m}\times\mathbb{R}^{n\times d}$ - valued processes $(X,Y,Z)$ on $[t,T]\times\Omega$ s.t.
\begin{multline*}
\|(X,Y,Z)\|_w:=\max\Bigg(\sup_{s\in[t,T]}\sqrt{\mathbb{E}_{\hat{t},\infty}[|X_s|^2]},\, (1+L_{\sigma,z})\sup_{s\in[t,T]}\sqrt{\mathbb{E}_{\hat{t},\infty}[|Y_s|^2]},\, \\
(1+L_{\sigma,z})\sqrt{\mathbb{E}_{\hat{t},\infty}\left[\int_t^T|Z_s|^2\dx s\right]}\Bigg)<\infty.
\end{multline*}
This means, that if $(X,Y,Z)$ also depends on a parameter $\lambda$, then $\|(X,Y,Z)\|_w$ would depend on $\lambda$ as well. \\
Let $\mathbb{H}$ be the space of all progressive mappings
$$ (X,Y,Z): \mathbb{R}^{n}\times[t,T]\times\Omega\longrightarrow\mathbb{R}^{n\times n}\times\mathbb{R}^{m\times n}\times\mathbb{R}^{n\times d\times n}$$
such that
$$\|(X,Y,Z)\|_s:=\esssup_{\lambda\in\mathbb{R}^{n}}\sup_{v\in S^{n-1}}\,\left\|(X(\lambda,\cdot)v,Y(\lambda,\cdot)v,Z(\lambda,\cdot)v)\right\|_w
<\infty.$$

Now fix $\lambda\in\mathbb{R}^{n}$! \\
For any $(X^0,Y^0,Z^0)\in\mathbb{G}_{\hat{t}}$
there are unique $(X^1,Y^1,Z^1)=F(X^0,Y^0,Z^0)\in\mathbb{G}_{\hat{t}}$ s.t.
$$ X^{1}_s:=X_t+\int_{t}^s\mu(r,X^{0}_r,Y^{0}_r,Z^{0}_r)\dx r+\int_{t}^s\sigma(r,X^{0}_r,Y^{0}_r,Z^{0}_r)\dx W_r. $$
$$ Y^{1}_s:=\xi(X^{1}_T)-\int_{s}^{T}f(r,X^{1}_r,Y^{0}_r,Z^{0}_r)\dx r-\int_{s}^{T}(Z^{1}_r)\dx W_r, $$
for almost all $s,\lambda,\omega$. We assume that this is clear. This defines the mapping $F:\mathbb{G}_{\hat{t}}\rightarrow \mathbb{G}_{\hat{t}}$. \\
In the sequel we will check that $F$ is a contraction w.r.t. $\|\cdot\|_w$ if $t$ is close enough to $T$, depending on the Lipschitz constant $L$ for $(\mu,\sigma,f,g)$, $L_{\sigma,z}$ and $L_{\xi,x}$.\\
Let $(X^0,Y^0,Z^0),(\tilde{X}^0,\tilde{Y}^0,\tilde{Z}^0)\in\mathbb{G}_{\hat{t}}$ and accordingly $(X^1,Y^1,Z^1)=F(X^0,Y^0,Z^0),(\tilde{X}^1,\tilde{Y}^1,\tilde{Z}^1)=F(\tilde{X}^0,\tilde{Y}^0,\tilde{Z}^0)\in\mathbb{G}_{\hat{t}}$.
We obviously have
$$ X^{1}_s-\tilde{X}^{1}_s=\int_{t}^s\mu(r,X^{0}_r,Y^{0}_r,Z^{0}_r)-\mu(r,\tilde{X}^0_r,\tilde{Y}^0_r,\tilde{Z}^0_r)\dx r+
\int_{t}^s\sigma(r,X^{0}_r,Y^{0}_r,Z^{0}_r)-\sigma(r,\tilde{X}^0_r,\tilde{Y}^0_r,\tilde{Z}^0_r)\dx W_r $$
and therefore
\begin{multline*}
\left(\mathbb{E}_{\hat{t}}\left[\left|X^{1}_s-\tilde{X}^{1}_s\right|^2\right]\right)^{\frac{1}{2}}\leq
\left(\mathbb{E}_{\hat{t}}\left[\left|\int_{t}^s\mu(r,X^{0}_r,Y^{0}_r,Z^{0}_r)-\mu(r,\tilde{X}^0_r,\tilde{Y}^0_r,\tilde{Z}^0_r)\dx r\right|^2\right]\right)^{\frac{1}{2}}+ \\
	 +\left(\mathbb{E}_{\hat{t}}\left[\left|\int_{t}^s\sigma(r,X^{0}_r,Y^{0}_r,Z^{0}_r)-\sigma(r,\tilde{X}^0_r,\tilde{Y}^0_r,\tilde{Z}^0_r)\dx W_r\right|^2\right]\right)^{\frac{1}{2}}\leq
\end{multline*}
\begin{multline*}
	\leq L\left(\mathbb{E}_{\hat{t}}\left[\left(\int_{t}^s|X^{0}_r-\tilde{X}^0_r|+|Y^{0}_r-\tilde{Y}^0_r|+|Z^{0}_r-\tilde{Z}^0_r|\dx r\right)^2\right]\right)^{\frac{1}{2}}+ \\
	 +\left(\mathbb{E}_{\hat{t}}\left[\int_{t}^s\left(L|X^{0}_r-\tilde{X}^0_r|+L|Y^{0}_r-\tilde{Y}^0_r|+L_{\sigma,z}|Z^{0}_r-\tilde{Z}^0_r|\right)^2\dx r\right]\right)^{\frac{1}{2}}\leq
\end{multline*}
\begin{multline*}
	\leq L\left(\mathbb{E}_{\hat{t}}\left[\left(\int_{t}^s|X^{0}_r-\tilde{X}^0_r|\dx r\right)^2\right]\right)^{\frac{1}{2}}+
L\left(\mathbb{E}_{\hat{t}}\left[\left(\int_{t}^s|Y^{0}_r-\tilde{Y}^0_r|\dx r\right)^2\right]\right)^{\frac{1}{2}}+ \\
	\shoveright{+L\left(\mathbb{E}_{\hat{t}}\left[\left(\int_{t}^s|Z^{0}_r-\tilde{Z}^0_r|\dx r\right)^2\right]\right)^{\frac{1}{2}}+ }\\
	\shoveleft{+L\left(\mathbb{E}_{\hat{t}}\left[\int_{t}^s|X^{0}_r-\tilde{X}^0_r|^2\dx r\right]\right)^{\frac{1}{2}}+
L\left(\mathbb{E}_{\hat{t}}\left[\int_{t}^s|Y^{0}_r-\tilde{Y}^0_r|^2\dx r\right]\right)^{\frac{1}{2}}+} \\
	+L_{\sigma,z}\left(\mathbb{E}_{\hat{t}}\left[\int_{t}^s|Z^{0}_r-\tilde{Z}^0_r|^2\dx r\right]\right)^{\frac{1}{2}}\leq
\end{multline*}
\begin{multline*}
	\leq L\sqrt{s-t}\left(\mathbb{E}_{\hat{t}}\left[\int_{t}^s|X^{0}_r-\tilde{X}^0_r|^2\dx r\right]\right)^{\frac{1}{2}}+
L\sqrt{s-t}\left(\mathbb{E}_{\hat{t}}\left[\int_{t}^s|Y^{0}_r-\tilde{Y}^0_r|^2\dx r\right]\right)^{\frac{1}{2}}+ \\
	\shoveright{+L\sqrt{s-t}\left(\mathbb{E}_{\hat{t}}\left[\int_{t}^s|Z^{0}_r-\tilde{Z}^0_r|^2\dx r\right]\right)^{\frac{1}{2}}+} \\
	 \shoveleft{+L\sqrt{s-t}\left(\sup_{r\in[t,T]}\mathbb{E}_{\hat{t},\infty}\left[|X^{0}_r-\tilde{X}^0_r|^2\right]\right)^{\frac{1}{2}}+
L\sqrt{s-t}\left(\sup_{r\in[t,T]}\mathbb{E}_{\hat{t},\infty}\left[|Y^{0}_r-\tilde{Y}^0_r|^2\right]\right)^{\frac{1}{2}}+} \\
	+L_{\sigma,z}\left(\mathbb{E}_{\hat{t},\infty}\left[\int_{t}^s|Z^{0}_r-\tilde{Z}^0_r|^2\dx r\right]\right)^{\frac{1}{2}}\leq
\end{multline*}
\begin{multline*}
	\leq L\left(T-t+\sqrt{T-t}\right)\left(\sup_{r\in[t,T]}\sqrt{\mathbb{E}_{\hat{t},\infty}\left[|X^{0}_r-\tilde{X}^0_r|^2\right]}+\sup_{r\in[t,T]}\sqrt{\mathbb{E}_{\hat{t},\infty}\left[|Y^{0}_r-\tilde{Y}^0_r|^2\right]}\right)+ \\
	 +\left(L_{\sigma,z}+L\sqrt{T-t}\right)\left(\mathbb{E}_{\hat{t},\infty}\left[\int_{t}^T|Z^{0}_r-\tilde{Z}^0_r|^2\dx r\right]\right)^{\frac{1}{2}}\leq
\end{multline*}
\begin{multline*}
	\leq L\left(T-t+\sqrt{T-t}\right)\sup_{r\in[t,T]}\sqrt{\mathbb{E}_{\hat{t},\infty}\left[|X^{0}_r-\tilde{X}^0_r|^2\right]}+ \\
	 +L\left(T-t+\sqrt{T-t}\right)(1+L_{\sigma,z})\sup_{r\in[t,T]}\sqrt{\mathbb{E}_{\hat{t},\infty}\left[|Y^{0}_r-\tilde{Y}^0_r|^2\right]}+ \\
	 \shoveright{+\frac{L_{\sigma,z}+L\sqrt{T-t}}{1+L_{\sigma,z}}(1+L_{\sigma,z})\left(\mathbb{E}_{\hat{t},\infty}\left[\int_{t}^T|Z^{0}_r-\tilde{Z}^0_r|^2\dx r\right]\right)^{\frac{1}{2}}\leq }\\
	\leq \left(2\cdot L\left(T-t+\sqrt{T-t}\right)+\frac{L_{\sigma,z}}{1+L_{\sigma,z}}+L\sqrt{T-t}\right)\left\|(X^{0}-\tilde{X}^0,Y^{0}-\tilde{Y}^0,Z^{0}-\tilde{Z}^0)\right\|_w.
\end{multline*}

Note that the constant in front of $\left\|(X^{0}-\tilde{X}^0,Y^{0}-\tilde{Y}^0,Z^{0}-\tilde{Z}^0)\right\|_w$ converges
to $\frac{L_{\sigma,z}}{1+L_{\sigma,z}}<1$ for $t\rightarrow T$.
Furthermore, we obviously have
$$ Y^{1}_s-\tilde{Y}^{1}_s+\int_{s}^{T}(Z^{1}_r-\tilde{Z}^{1}_r)\dx W_r=\xi(X^{1}_T)-\xi(\tilde{X}^{1}_T)-\int_{s}^{T}\left(f(r,X^{1}_r,Y^{0}_r,Z^{0}_r)-f(r,\tilde{X}^{1}_r,\tilde{Y}^{0}_r,\tilde{Z}^{0}_r)\right)\dx r $$
and therefore
\begin{multline*}
\left(\mathbb{E}_{\hat{t}}\left[|Y^{1}_s-\tilde{Y}^{1}_s|^2\right]+\mathbb{E}_{\hat{t}}\left[\int_{s}^{T}|Z^{1}_r-\tilde{Z}^{1}_r|^2\dx r\right]\right)^\frac{1}{2}= \\
=\left(\mathbb{E}_{\hat{t}}\left[|Y^{1}_s-\tilde{Y}^{1}_s|^2\right]+\mathbb{E}_{\hat{t}}\left[\left|\int_{s}^{T}(Z^{1}_r-\tilde{Z}^{1}_r)\dx W_r\right|^2\right]\right)^\frac{1}{2}= \\
=\left(\mathbb{E}_{\hat{t}}\left[\left|Y^{1}_s-\tilde{Y}^{1}_s+\int_{s}^{T}(Z^{1}_r-\tilde{Z}^{1}_r)\dx W_r\right|^2\right]\right)^\frac{1}{2}\leq
\end{multline*}
$$ \leq \left(\mathbb{E}_{\hat{t}}\left[|\xi(X^{1}_T)-\xi(\tilde{X}^{1}_T)|^2\right]\right)^{\frac{1}{2}}+
\left(\mathbb{E}_{\hat{t}}\left[\left|\int_{s}^{T}f(r,X^{1}_r,Y^{0}_r,Z^{0}_r)-f(r,\tilde{X}^{1}_r,\tilde{Y}^{0}_r,\tilde{Z}^{0}_r)\dx r\right|^2\right]\right)^{\frac{1}{2}}\leq $$
\begin{multline*}
\leq L_{\xi,x}\left(\mathbb{E}_{\hat{t}}\left[|X^{1}_T-\tilde{X}^{1}_T|^2\right]\right)^{\frac{1}{2}}+L\left(T-t\right)\sup_{r\in[t,T]}\sqrt{\mathbb{E}_{\hat{t},\infty}\left[|X^{1}_r-\tilde{X}^1_r|^2\right]}+ \\
+L\left(T-t\right)\sup_{r\in[t,T]}\sqrt{\mathbb{E}_{\hat{t},\infty}\left[|Y^{0}_r-\tilde{Y}^0_r|^2\right]}+L\sqrt{T-t}\left(\mathbb{E}_{\hat{t},\infty}\left[\int_{t}^T|Z^{0}_r-\tilde{Z}^0_r|^2\dx r\right]\right)^{\frac{1}{2}}\leq
\end{multline*}
\begin{multline*}
\leq (L_{\xi,x}+L(T-t))\sup_{r\in[t,T]}\sqrt{\mathbb{E}_{\hat{t},\infty}\left[|X^{1}_r-\tilde{X}^1_r|^2\right]}+
L\left(T-t\right)\sup_{r\in[t,T]}\sqrt{\mathbb{E}_{\hat{t},\infty}\left[|Y^{0}_r-\tilde{Y}^0_r|^2\right]}+\\
+L\sqrt{T-t}\left(\mathbb{E}_{\hat{t},\infty}\left[\int_{t}^T|Z^{0}_r-\tilde{Z}^0_r|^2\dx r\right]\right)^{\frac{1}{2}}\leq
\end{multline*}
$$\leq (L_{\xi,x}+L(T-t))L\left(T-t+\sqrt{T-t}\right)\sup_{r\in[t,T]}\sqrt{\mathbb{E}_{\hat{t},\infty}\left[|X^{0}_r-\tilde{X}^0_r|^2\right]}+ $$
$$+\left((L_{\xi,x}+L(T-t))L\left(T-t+\sqrt{T-t}\right)+L\left(T-t\right)\right)\sup_{r\in[t,T]}\sqrt{\mathbb{E}_{\hat{t},\infty}\left[|Y^{0}_r-\tilde{Y}^0_r|^2\right]}+ $$
$$+\left((L_{\xi,x}+L(T-t))\left(L_{\sigma,z}+L\sqrt{T-t}\right)+L\sqrt{T-t}\right)\left(\mathbb{E}_{\hat{t},\infty}\left[\int_{t}^T|Z^{0}_r-\tilde{Z}^0_r|^2\dx r\right]\right)^{\frac{1}{2}}. $$
Finally, we obtain
$$(1+L_{\sigma,z})\left(\sup_{s\in[t,T]}\mathbb{E}_{\hat{t},\infty}\left[|Y^{1}_s-\tilde{Y}^{1}_s|^2\right]+\mathbb{E}_{\hat{t},\infty}\left[\int_{t}^{T}|Z^{1}_r-\tilde{Z}^{1}_r|^2\dx r\right]\right)^\frac{1}{2}\leq $$
$$\leq (1+L_{\sigma,z})(L_{\xi,x}+L(T-t))L\left(T-t+\sqrt{T-t}\right)\sup_{r\in[t,T]}\sqrt{\mathbb{E}_{\hat{t},\infty}\left[|X^{0}_r-\tilde{X}^0_r|^2\right]}+ $$
$$+\left((L_{\xi,x}+L(T-t))L\left(T-t+\sqrt{T-t}\right)+L\left(T-t\right)\right)(1+L_{\sigma,z})\sup_{r\in[t,T]}\sqrt{\mathbb{E}_{\hat{t},\infty}\left[|Y^{0}_r-\tilde{Y}^0_r|^2\right]}+ $$
$$+\left((L_{\xi,x}+L(T-t))\left(L_{\sigma,z}+L\sqrt{T-t}\right)+L\sqrt{T-t}\right)(1+L_{\sigma,z})\left(\mathbb{E}_{\hat{t},\infty}\left[\int_{t}^T|Z^{0}_r-\tilde{Z}^0_r|^2\dx r\right]\right)^{\frac{1}{2}}\leq $$
\begin{multline*}
\leq \Bigg((1+L_{\sigma,z})(L_{\xi,x}+L(T-t))L\left(T-t+\sqrt{T-t}\right)+ \\
+\left((L_{\xi,x}+L(T-t))L\left(T-t+\sqrt{T-t}\right)+L\left(T-t\right)\right)+ \\
+\left((L_{\xi,x}+L(T-t))\left(L_{\sigma,z}+L\sqrt{T-t}\right)+L\sqrt{T-t}\right)\Bigg)\left\|(X^{0}-\tilde{X}^0,Y^{0}-\tilde{Y}^0,Z^{0}-\tilde{Z}^0)\right\|_w.
\end{multline*}
Note that the constant in front of $\left\|(X^{0}-\tilde{X}^0,Y^{0}-\tilde{Y}^0,Z^{0}-\tilde{Z}^0)\right\|_w$ converges from above to the value
$L_{\sigma,z}\cdot L_{\xi,x}<1$ for $t\rightarrow T$.

We have finally shown
$$\left\|(X^{1}-\tilde{X}^1,Y^{1}-\tilde{Y}^1,Z^{1}-\tilde{Z}^1)\right\|_w\leq \gamma_t
\left\|(X^{0}-\tilde{X}^0,Y^{0}-\tilde{Y}^0,Z^{0}-\tilde{Z}^0)\right\|_w,$$
where
\begin{multline*} \gamma_t:=\left(2\cdot L\left(T-t+\sqrt{T-t}\right)+\frac{L_{\sigma,z}}{1+L_{\sigma,z}}+L\sqrt{T-t}\right)\,\vee\,\\ \vee\, \Bigg((1+L_{\sigma,z})(L_{\xi,x}+L(T-t))L\left(T-t+\sqrt{T-t}\right)+ \\
+\left((L_{\xi,x}+L(T-t))L\left(T-t+\sqrt{T-t}\right)+L\left(T-t\right)\right)+ \\
+\left((L_{\xi,x}+L(T-t))\left(L_{\sigma,z}+L\sqrt{T-t}\right)+L\sqrt{T-t}\right)\Bigg)
\end{multline*}
Note that $\gamma_t<1$ for $t<T$ large enough. More precisely $\lim_{t\uparrow T}\gamma_t=\frac{L_{\sigma,z}}{1+L_{\sigma,z}}\vee \left(L_{\sigma,z}\cdot L_{\xi,x}\right)$. Also note that $\gamma_t$ is monotonically decreasing in $t$.

If $\gamma_t<1$ holds we set $(X^0,Y^0,Z^0):=(0,0,0)$ and define recursively $$(X^k,Y^k,Z^k):=F(X^{k-1},Y^{k-1},Z^{k-1}),$$ for $k\in\mathbb{N}$. According to Banach's fixed point theorem this sequence converges in $\mathbb{G}_{\hat{t}}$ to a fixed point of $F$, which is unique.
This already shows existence and uniqueness of a $\mathbb{G}_{\hat{t}}$ - solution $(X,Y,Z)$ of the considered coupled FBSDE for a small interval.
Additionally, due to a priori estimates of the Banach fixed point theorem the norm $\|(X,Y,Z)\|_w$ is bounded by $\frac{1}{1-\gamma_t}\left\|(X^{1}-X^0,Y^{1}-Y^0,Z^{1}-Z^0)\right\|_w$ which in turn can be controlled in $Z$ by a bound which depends on $\gamma_t$, $\|X_t(\lambda,\cdot)\|_{\infty}$, $\left\|\left(|\mu|+|f|+|\sigma|\right)(\cdot,\cdot,0,0,0)\right\|_{\infty}$, $\|\xi(\cdot,0)\|_{\infty}$ and is monotonically increasing in these values. \\

Furthermore, we can show that $(X,Y,Z)$ is a progressively measurable function of $\lambda,s,\omega$ and that $\left(\frac{\dx}{\dx\lambda}X,\frac{\dx}{\dx\lambda}Y,\frac{\dx}{\dx\lambda}Z\right)$ exists and is in $\mathbb{H}$.\\
For this purpose define $(X^0,Y^0,Z^0):=(0,0,0)$ and recursively $(X^k,Y^k,Z^k):=F(X^{k-1},Y^{k-1},Z^{k-1})$.
We claim that for all $k$
\begin{itemize}
\item $X^{k},Y^{k},Z^{k}$ are progressively measurable and weakly differentiable w.r.t. $\lambda$ and
\item $(X^{k},Y^{k},Z^{k})\in\mathbb{H}$.
\end{itemize}
Clearly this holds for the index $k=0$. In order to run an inductive argument, assume that it holds up to an index $k-1$. We need to show that it also holds for $k$. In order to do this we consider $(X^k,Y^k,Z^k)=F(X^{k-1},Y^{k-1},Z^{k-1})$, which is really a system of two equations according to the definition of $F$ and differentiate it w.r.t. the parameter $\lambda$.\\
Using Lemmas \ref{wd2}, \ref{wd3} and \ref{chainrule} we obtain for all $v\in\mathbb{R}^n$:
\begin{multline}\label{firstequation}
\frac{\dx}{\dx\lambda}X^{k}_sv=\frac{\dx}{\dx\lambda}X_tv+\int_{t}^s\Delta^{(\ldots)}_{x}\mu\frac{\dx}{\dx\lambda}X^{k-1}_rv+\Delta^{(\ldots)}_{y}\mu\frac{\dx}{\dx\lambda}Y^{k-1}_rv+\Delta^{(\ldots)}_{z}\mu\frac{\dx}{\dx\lambda}Z^{k-1}_rv\dx r+\\
+\int_{t}^s\Delta^{(\ldots)}_{x}\sigma\frac{\dx}{\dx\lambda}X^{k-1}_rv+\Delta^{(\ldots)}_{y}\sigma\frac{\dx}{\dx\lambda}Y^{k-1}_rv+\Delta^{(\ldots)}_{z}\sigma\frac{\dx}{\dx\lambda}Z^{k-1}_rv\dx W_r,
\end{multline}
where "$\ldots$" stands for $(r,X^{k-1}_r,Y^{k-1}_r,Z^{k-1}_r)$. Also using Lemmas \ref{wd2}, \ref{wd5} and \ref{chainrule} we get for all $v\in\mathbb{R}^n$:
\begin{multline}\label{secondequation}
\frac{\dx}{\dx\lambda}Y^{k}_sv+\int_{s}^{T}\frac{\dx}{\dx\lambda}(Z^{k}_rv)\dx W_r=\Delta^{X^{k}_T}_{x}\xi \frac{\dx}{\dx\lambda}X^{k}_Tv-\\
-\int_{s}^{T}\Delta^{(\ldots)}_{x}f\frac{\dx}{\dx\lambda}X^{k}_rv+\Delta^{(\ldots)}_{y}f\frac{\dx}{\dx\lambda}Y^{k-1}_rv+\Delta^{(\ldots)}_{z}f\frac{\dx}{\dx\lambda}Z^{k-1}_rv\dx r.
\end{multline}
Here "$\ldots$" stands for $(r,X^{k}_r,Y^{k-1}_r,Z^{k-1}_r)$. \\
From (\ref{firstequation}) we deduce using Cauchy-Schwarz' and Minkowski's inequalities, as well as It\^o's isometry
\begin{multline*}
\left(\mathbb{E}_{\hat{t}}\left[\left|\frac{\dx}{\dx\lambda}X^{k}_s\right|_{v}^2\right]\right)^{\frac{1}{2}}\leq \left\|\frac{\dx}{\dx\lambda}X_t\right\|+ L\left(T-t+\sqrt{T-t}\right)\sup_{r\in[t,T]}\sqrt{\mathbb{E}_{\hat{t},\infty}\left[\left|\frac{\dx}{\dx\lambda}X^{k-1}_r\right|_{v}^2\right]}+\\
+L\left(T-t+\sqrt{T-t}\right)\sup_{r\in[t,T]}\sqrt{\mathbb{E}_{\hat{t},\infty}\left[\left|\frac{\dx}{\dx\lambda}Y^{k-1}_r\right|_{v}^2\right]}+\\
+\left(L_{\sigma,z}+L\sqrt{T-t}\right)\left(\mathbb{E}_{\hat{t},\infty}\left[\int_{t}^T\left|\frac{\dx}{\dx\lambda}Z^{k-1}_r\right|_{v}^2\dx r\right]\right)^{\frac{1}{2}}\leq
\end{multline*}
\begin{multline*}
\leq \left\|\frac{\dx}{\dx\lambda}X_t\right\|+L\left(T-t+\sqrt{T-t}\right)\sup_{r\in[t,T]}\sqrt{\mathbb{E}_{\hat{t},\infty}\left[\left|\frac{\dx}{\dx\lambda}X^{k-1}_r\right|_{v}^2\right]}+ \\
+L\left(T-t+\sqrt{T-t}\right)(1+L_{\sigma,z})\sup_{r\in[t,T]}\sqrt{\mathbb{E}_{\hat{t},\infty}\left[\left|\frac{\dx}{\dx\lambda}Y^{k-1}_r\right|_{v}^2\right]}+ \\
+\frac{L_{\sigma,z}+L\sqrt{T-t}}{1+L_{\sigma,z}}(1+L_{\sigma,z})\left(\mathbb{E}_{\hat{t},\infty}\left[\int_{t}^T\left|\frac{\dx}{\dx\lambda}Z^{k-1}_r\right|_{v}^2\dx r\right]\right)^{\frac{1}{2}}\leq
\end{multline*}
$$\leq \left\|\frac{\dx}{\dx\lambda}X_t\right\|+\left(2L\left(T-t+\sqrt{T-t}\right)+
\frac{L_{\sigma,z}}{1+L_{\sigma,z}}+L\sqrt{T-t}\right)
\left\|\left(\frac{\dx}{\dx\lambda}X^{k-1},\frac{\dx}{\dx\lambda}Y^{k-1},\frac{\dx}{\dx\lambda}Z^{k-1}\right)\right\|_s, $$
where $\left\|\frac{\dx}{\dx\lambda}X_t\right\|:=\esssup_{\lambda\in\mathbb{R}^n}\sup_{v\in S^{n-1}}\mathbb{E}_{\hat{t},\infty}\left[\left|\frac{\dx}{\dx \lambda}X_t(\lambda,\cdot)\right|_{v}\right]$.

Similarly equation (\ref{secondequation}) implies
$$ \left(\mathbb{E}_{\hat{t}}\left[\left|\frac{\dx}{\dx\lambda}Y^{k}_s\right|_v^2\right]+
\mathbb{E}_{\hat{t}}\left[\int_{s}^{T}\left|\frac{\dx}{\dx\lambda}Z^{k}_r\right|_v^2\dx r\right]\right)^{\frac{1}{2}}\leq $$
$$\leq (L_{\xi,x}+L(T-t))\sup_{r\in[t,T]}\sqrt{\mathbb{E}_{\hat{t},\infty}\left[\left|\frac{\dx}{\dx\lambda}X^{k}_r\right|_v^2\right]}+
L\left(T-t\right)\sup_{r\in[t,T]}\sqrt{\mathbb{E}_{\hat{t},\infty}\left[\left|\frac{\dx}{\dx\lambda}Y^{k-1}_r\right|_v^2\right]}+$$
$$+L\sqrt{T-t}\left(\mathbb{E}_{\hat{t},\infty}\left[\int_{t}^T\left|\frac{\dx}{\dx\lambda}Z^{k-1}_r\right|_v^2\dx r\right]\right)^{\frac{1}{2}}\leq $$
\begin{multline*}
\leq (L_{\xi,x}+L(T-t))\cdot \left\|\frac{\dx}{\dx\lambda}X_t\right\|+ \\
(L_{\xi,x}+L(T-t))L\left(T-t+\sqrt{T-t}\right)\sup_{r\in[t,T]}\sqrt{\mathbb{E}_{\hat{t},\infty}\left[\left|\frac{\dx}{\dx\lambda}X^{k-1}_r\right|_v^2\right]}+ \\
+\left((L_{\xi,x}+L(T-t))L\left(T-t+\sqrt{T-t}\right)+L\left(T-t\right)\right)\sup_{r\in[t,T]}\sqrt{\mathbb{E}_{\hat{t},\infty}\left[\left|\frac{\dx}{\dx\lambda}Y^{k-1}_r\right|_v^2\right]}+ \\
+\left((L_{\xi,x}+L(T-t))\left(L_{\sigma,z}+L\sqrt{T-t}\right)+L\sqrt{T-t}\right)\left(\mathbb{E}_{\hat{t},\infty}\left[\int_{t}^T\left|\frac{\dx}{\dx\lambda}Z^{k-1}_r\right|_v^2\dx r\right]\right)^{\frac{1}{2}}\leq
\end{multline*}
$$\leq \frac{1}{1+L_{\sigma,z}}\Bigg((L_{\xi,x}+L(T-t))\cdot \left\|\frac{\dx}{\dx\lambda}X_t\right\|+(1+L_{\sigma,z})(L_{\xi,x}+L(T-t))L\left(T-t+\sqrt{T-t}\right)+ $$
$$+\left((L_{\xi,x}+L(T-t))L\left(T-t+\sqrt{T-t}\right)+L\left(T-t\right)\right)+ $$
$$+\left((L_{\xi,x}+L(T-t))\left(L_{\sigma,z}+L\sqrt{T-t}\right)+L\sqrt{T-t}\right)\Bigg)
\left\|\left(\frac{\dx}{\dx\lambda}X^{k-1},\frac{\dx}{\dx\lambda}Y^{k-1},\frac{\dx}{\dx\lambda}Z^{k-1}\right)\right\|_s.$$
This leads to
\begin{multline*}
\left\|\left(\frac{\dx}{\dx\lambda}X^{k},\frac{\dx}{\dx\lambda}Y^{k},\frac{\dx}{\dx\lambda}Z^{k}\right)\right\|_s\leq \\
\leq\left\|\frac{\dx}{\dx\lambda}X_t\right\|\vee\left((L_{\xi,x}+L(T-t))\left\|\frac{\dx}{\dx\lambda}X_t\right\|\right)+
\gamma_t\left\|\left(\frac{\dx}{\dx\lambda}X^{k-1},\frac{\dx}{\dx\lambda}Y^{k-1},\frac{\dx}{\dx\lambda}Z^{k-1}\right)\right\|_s.
\end{multline*}
As mentioned $\gamma_t\leq \gamma_{t'}<1$ for all $t\in[t',T)$, if $t'$ is large enough. For such $t$ we have
\begin{equation}\label{bb}\sup_{k\in\mathbb{N}_0}\left\|\left(\frac{\dx}{\dx\lambda}X^{k},\frac{\dx}{\dx\lambda}Y^{k},\frac{\dx}{\dx\lambda}Z^{k}\right)\right\|_s\leq \frac{\left\|\frac{\dx}{\dx\lambda}X_t\right\|\vee(L_{\xi,x}+L(T-t))\left\|\frac{\dx}{\dx\lambda}X_t\right\|}{1-\gamma_{t'}}\leq \left\|\frac{\dx}{\dx\lambda}X_t\right\|\cdot K,\end{equation}
where $$K:=\frac{1\vee(L_{\xi,x}+L(T-t))}{1-\frac{L_{\sigma,z}}{1+L_{\sigma,z}}\vee \left(L_{\sigma,z}\cdot L_{\xi,x}\right)}<\infty.$$

Knowing that $\left(X^{k},Y^{k},Z^{k}\right)$ converges to $\left(X,Y,Z\right)$ this already implies by Lemma \ref{weakderivexist} that the weak derivative $\left(\frac{\dx}{\dx\lambda}X,\frac{\dx}{\dx\lambda}Y,\frac{\dx}{\dx\lambda}Z\right)$ exists and satisfies
\begin{equation}\label{bbb}\left\|\left(\frac{\dx}{\dx\lambda}X,\frac{\dx}{\dx\lambda}Y,\frac{\dx}{\dx\lambda}Z\right)\right\|_s
\leq C K \left\|\frac{\dx}{\dx\lambda}X_t\right\|,\end{equation}
for all $t\in[t',T]$, where $C<\infty$ is some constant (not depending on $t$). Here Lemma \ref{weakderivexist} is applied to each component of $X,Y$ and $Z$ separately and in case of $X$ and $Y$ for each fixed time $s\in[t,T]$ separately. Furthermore, in order to apply Lemma \ref{weakderivexist}, which is formulated for integrals and not for conditional expecations, we need to decompose $\Omega$ into $\Omega_1\times\Omega_2$ such that the first component in $\omega=(\omega_1,\omega_2)$ represents all the information until time $\hat{t}$ and the second the remaining information. Then we can fix $\omega_1$ and write for instance $\mathbb{E}_{\hat{t}}[Y_s](\omega_1,\omega_2)=\mathbb{E}[Y_s|\mathcal{F}_{\hat{t}}](\omega_1,\omega_2)=\mathbb{E}[Y_s(\omega_1,\cdot)]$, etc. So Lemma \ref{weakderivexist} can be applied for each fixed $\omega_1$ separately. Also note that norms of the form $\sup_{v\in S^{n-1}}\sqrt{\mathbb{E}[|\cdot |^2_v]}$ are equivalent to norms $\sqrt{\mathbb{E}[|\cdot |^2]}$.

Moreover, we can deduce a more restrictive bound for $\frac{\dx}{\dx\lambda}Y$. Let us write $\tilde{K}:=\left\|\frac{\dx}{\dx\lambda}X_t\right\|\cdot K$ for short.
Using (\ref{secondequation}) and (\ref{bb}) we have
$$ \left(\mathbb{E}_{\hat{t}}\left[\left|\frac{\dx}{\dx\lambda}Y^k_s\right|_v^2\right]+
\mathbb{E}_{\hat{t}}\left[\int_{s}^{T}\left|\frac{\dx}{\dx\lambda}Z^k_r\right|_v^2\dx r\right]\right)^{\frac{1}{2}}
\leq L_{\xi,x}\sup_{r\in[t,T]}\sqrt{\mathbb{E}_{\hat{t}}\left[\left|\frac{\dx}{\dx\lambda}X^k_r\right|_v^2\right]}+
C\tilde{K}\sqrt{T-t},$$
where $C$ is some constant not depending on $t$ or $X_t$. In other words
$$ \mathbb{E}_{\hat{t}}\left[\left|\frac{\dx}{\dx\lambda}Y^k_s\right|_v^2\right]+
\mathbb{E}_{\hat{t}}\left[\int_{s}^{T}\left|\frac{\dx}{\dx\lambda}Z^k_r\right|_v^2\dx r\right]
\leq L_{\xi,x}^2\sup_{r\in[t,T]}\mathbb{E}_{\hat{t}}\left[\left|\frac{\dx}{\dx\lambda}X^k_r\right|_v^2\right]+
C\tilde{K}^2(\sqrt{T-t}+(T-t)).$$
Letting $k\rightarrow\infty$ we get
$$ \mathbb{E}_{\hat{t}}\left[\left|\frac{\dx}{\dx\lambda}Y_s\right|_v^2\right]+
\mathbb{E}_{\hat{t}}\left[\int_{s}^{T}\left|\frac{\dx}{\dx\lambda}Z_r\right|_v^2\dx r\right]
\leq L_{\xi,x}^2\sup_{r\in[t,T]}\mathbb{E}_{\hat{t}}\left[\left|\frac{\dx}{\dx\lambda}X_r\right|_v^2\right]+
C\tilde{K}^2(\sqrt{T-t}+(T-t)).$$
Furthermore, we observe
$$\mathbb{E}_{\hat{t}}\left[\left|\frac{\dx}{\dx\lambda}X^k_s\right|_v^2\right]\leq \left\|\frac{\dx}{\dx\lambda}X_t\right\|^2+\left\|\frac{\dx}{\dx\lambda}X_t\right\|C\tilde{K}\sqrt{T-t}+C\tilde{K}^2\sqrt{T-t}+
L_{\sigma,z}^2\mathbb{E}_{\hat{t}}\left[\int_{t}^T\left|\frac{\dx}{\dx\lambda}Z^{k-1}_r\right|_v^2\dx r\right],$$
for all $s\in[t,T]$.
This can be seen by taking to the squares both sides of (\ref{firstequation}), writing the right hand side as a sum of products, taking expectations and using Cauchy-Schwarz' inequality together with (\ref{bb}) several times. \\
Letting $k\rightarrow\infty$ leads to
$$\mathbb{E}_{\hat{t}}\left[\left|\frac{\dx}{\dx\lambda}X_s\right|_v^2\right]\leq \left\|\frac{\dx}{\dx\lambda}X_t\right\|^2+\left\|\frac{\dx}{\dx\lambda}X_t\right\|C\tilde{K}\sqrt{T-t}+C\tilde{K}^2\sqrt{T-t}+
L_{\sigma,z}^2\mathbb{E}_{\hat{t}}\left[\int_{t}^T\left|\frac{\dx}{\dx\lambda}Z_r\right|_v^2\dx r\right],$$
for all $\hat{t}\in[t,T]$.
By plugging this last inequality into the preceding inequality for $Y,Z$, we have
$$ \mathbb{E}_{\hat{t}}\left[\left|\frac{\dx}{\dx\lambda}Y_s\right|_v^2\right]+
\mathbb{E}_{\hat{t}}\left[\int_{s}^{T}\left|\frac{\dx}{\dx\lambda}Z_r\right|_v^2\dx r\right]\leq$$
$$\leq L_{\xi,x}^2\left\|\frac{\dx}{\dx\lambda}X_t\right\|^2+(L_{\xi,x}L_{\sigma,z})^2\mathbb{E}_{\hat{t}}\left[\int_{t}^T\left|\frac{\dx}{\dx\lambda}Z_r\right|_v^2\dx r\right]+
C\tilde{K}\left(\tilde{K}+\left\|\frac{\dx}{\dx\lambda}X_t\right\|\right)(\sqrt{T-t}+(T-t)).$$
Because of $L_{\xi,x}L_{\sigma,z}<1$ we have
\begin{multline}\label{bbbb}\left(\mathbb{E}_{\hat{t}}\left[\left|\frac{\dx}{\dx\lambda}Y_s\right|_v^2\right]\right)^\frac{1}{2}\leq L_{\xi,x}\left\|\frac{\dx}{\dx\lambda}X_t\right\|+\sqrt{C\tilde{K}\left(\tilde{K}+\left\|\frac{\dx}{\dx\lambda}X_t\right\|\right)\left(\sqrt{T-t}+(T-t)\right)}=\\
=\left\|\frac{\dx}{\dx\lambda}X_t\right\|\cdot\left(L_{\xi,x}+\sqrt{CK\left(K+1\right)\left(\sqrt{T-t}+(T-t)\right)}\right).\end{multline}

Now, for any $\lambda\in\mathbb{R}^n$ and any $t\in[t',T)$ set
$$u(t,\cdot,\lambda):=Y_t(\lambda,\cdot),$$
 where $(X,Y,Z)$ is the unique $\mathbb{G}_{t}$ - solution to the FBSDE considered above with $X_t(\lambda,\omega):=\lambda$. Note that $u(t,\cdot,\lambda)$ is $\mathcal{F}_t$-measurable. \\
Using inequality (\ref{bbbb}) and $\left\|\frac{\dx}{\dx\lambda}X_t\right\|=1$
we obtain that $u(t,\cdot)$ is Lipschitz continuous in $\lambda$ with a Lipschitz constant, which can be bounded away from $\frac{1}{L_{\sigma,z}}$ (a value strictly larger than $L_{\xi,x}$) by choosing $t'<T$ large enough. \\
Also note that $\|u(\cdot,\cdot,0)\|_{\infty}<\infty$ holds due to the aforementioned a priori bound on $\|(X,Y,Z)\|_w$.

Furthermore observe that for some random $\mathcal{F}_t$-measurable $X_t:\Omega\rightarrow\mathbb{R}^n$ with $\mathbb{E}_{\hat{t},\infty}\left[|X_t|^2\right]<\infty$ the corresponding $X,Y,Z$ would have to satisfy $Y_t=u(t,X_t)$ a.s.: This can be shown, by assuming without loss of generality that $\Omega=\Omega_1\times\Omega_2$ where the projections $\pi_1$, $\pi_2$ on the two components are independent such that $\mathcal{F}_t=\sigma(\pi_1)$ and $\sigma((W_r-W_t)_{r\in[T,t]})=\sigma(\pi_2)$ and so $X_t$ can be assumed to be a function of $\omega_1$. Now fix the first component $\omega_1$, so $X_t(\omega_1)$ becomes a constant and $X,Y,Z$ only depend on the second component $\omega_2$ and solve some Lipschitz FBSDE on $[t,T]$, which is the same FBSDE that is solved by processes $X^\lambda,Y^\lambda,Z^\lambda$ obtained from the above problem with initial value $\lambda\in\mathbb{R}^n$ if we fix the first component $\omega_1$ and choose $\lambda:=X_t(\omega_1)$. Remember $u(t,\omega_1,\lambda):=Y^\lambda_t(\omega_1)$. Note also that $(X,Y,Z)$ and $(X^\lambda,Y^\lambda,Z^\lambda)$ are both in $\mathbb{G}_{\hat{t}}$. If we fix $\omega_1$ they will be both in an analogous space, which, like $\mathbb{G}_{\hat{t}}$, will have the property that two solutions to the same Lipschitz FBSDE with the same Lipschitz constants as that of $\mu,\sigma,f,\xi$ (or smaller) must coincide if both solutions are in this $\mathbb{G}_{\hat{t}}$ - like space. This shows $Y_t(\omega_1)=Y^\lambda_t(\omega_1)=u(t,\omega_1,\lambda)=u(t,\omega_1,X_t(\omega_1))$.

If we start at some $\mathcal{F}_t$-measurable $X_t$ s.t. $\mathbb{E}_{\hat{t},\infty}\left[|X_t|^2\right]<\infty$, $t\in[t',T)$, and consider the corresponding $X_s$, $s\in[t,T]$, which is $\mathcal{F}_s$-measurable, we will have $Y_s=u(s,X_s)$, since the same argument as above can be applied on the interval $[s,T]$ starting with $X_s$ and decomposing $\Omega$ in an $\mathcal{F}_s$ - component and an independent $\sigma(W_r-W_s,r\in[s,T])$ - component. We again have used that all processes considered are in a sufficiently strongly regular ($\mathbb{G}_{\hat{t}}$ - like) space.

Let $\tilde{X}^{(t)},\tilde{Y}^{(t)},\tilde{Z}^{(t)}$ be processes on $\mathbb{R}^n\times [t,T]\times \Omega$ as constructed above (via Picard iteration) with initial condition $X_t(\lambda,\omega):=\lambda$, $(\lambda,\omega)\in\mathbb{R}^n\times\Omega$. For every  $\lambda\in\mathbb{R}^n$ and $s\in [t,T]$ we have $\tilde{Y}^{(t)}_s(\lambda,\cdot)=u(s,\cdot,\tilde{X}^{(t)}_s(\lambda,\cdot))$ a.s. as mentioned earlier.

Now we can show that $u:[t',T]\times\mathbb{R}^n\longrightarrow\mathbb{R}^m$ is a decoupling field. Choose any $t_1<t_2$ from $[t',T]$ and \emph{any} $\mathcal{F}_{t_1}$-measurable initial condition $X_{t_1}$. Define $X_s(\omega):=\tilde{X}^{(t_1)}_s(X_{t_1}(\omega),\omega)$, $Y_s(\omega):=\tilde{Y}^{(t_1)}_s(X_{t_1}(\omega),\omega)$, $Z_s(\omega):=\tilde{Z}^{(t_1)}_s(X_{t_1}(\omega),\omega)$ for all $s\in [t_1,t_2]$ and $\omega\in\Omega$. It is straightforward to check that $X,Y,Z$ are progressively measurable, satisfy the FBSDE and the decoupling condition (all these properties are inherited from $\tilde{X}^{(t_1)},\tilde{Y}^{(t_1)},\tilde{Z}^{(t_1)}$). The initial condition is satisfied via definition of $X$ and $\tilde{X}^{(t_1)}$. \\

We can also show a.e.-uniqueness of the processes $X,Y,Z$ on $[t,t_2]\times\Omega\times\mathbb{R}^n$ solving the FBSDE together with the decoupling condition via $u$ and intial condition $X_t=x\in\mathbb{R}^n$ , where $[t,t_2]\subseteq [t',T]$.

The triplets $(X,Y,Z)$ constructed so far are in $\mathbb{G}_{t}$. Assume that there is another triplet
$(\hat{X},\hat{Y},\hat{Z})$ with the above properties. If we can show that this triplet must be in $\mathbb{G}_{t}$, we are done. Otherwise observe that for every stopping time $\tau\in[t,t_2]$ the triplets $(X_{\cdot\wedge\tau},Y_{\cdot\wedge\tau},Z\mathbf{1}_{\{\cdot\leq\tau\}})$ and
$(\hat{X}_{\cdot\wedge\tau},\hat{Y}_{\cdot\wedge\tau},\hat{Z}\mathbf{1}_{\{\cdot\leq\tau\}})$ both solve the FBSDE
given by $\hat{\mu}=\mu\mathbf{1}_{\{\cdot\leq\tau\}}$, $\hat{\sigma}=\sigma\mathbf{1}_{\{\cdot\leq\tau\}}$,
$\hat{f}=f\mathbf{1}_{\{\cdot\leq\tau\}}$ and $\hat{\xi}=u(\tau,\cdot)$. Note that $L_{\hat{\xi},x}\leq L_{u,x}<L_{\sigma,z}^{-1}\leq L_{\hat{\sigma},z}^{-1}$. This new FBSDE on $[t,t_2]$ has the same properties as the initial one and we also have uniqueness of $\mathbb{G}_{t}$ - solutions. If $\tau$ is chosen such that
$(\hat{X}_{\cdot\wedge\tau},\hat{Y}_{\cdot\wedge\tau},\hat{Z}\mathbf{1}_{\{\cdot\leq\tau\}})$ is in $\mathbb{G}_{t}$,
$(\hat{X},\hat{Y},\hat{Z})$ and $(X,Y,Z)$ will have to coincide on $[t,\tau]$. Using localization, we see that
$(\hat{X},\hat{Y},\hat{Z})$ and $(X,Y,Z)$ must be a.e. equal.

Uniqueness of $u$ on $[t',T]$ follows easily from our knowledge, that processes $(X,Y,Z)$ associated with decoupling fields with the properties $L_{u,x}<L_{\sigma,z}^{-1}$ and $\|u(\cdot,\cdot,0)\|_{\infty}<\infty$ are always in $\mathbb{G}_{t}$, as we have seen (at least if $X_t=x\in\mathbb{R}^n$): Since the FBSDE on an interval $[t,T]$, $t\in[t',T]$ is the same for all decoupling fields, the $\mathbb{G}_{t}$ - solution $X,Y,Z$ is also the same for all decoupling fields. This uniquely determines $u(t,\cdot,x)=Y_t(x,\cdot)$.
\end{proof}

\begin{rem}\label{hchoice}
We observe from the proof that the supremum of all $h=T-t$, with $t$ satisfying the properties required in Theorem \ref{locexist} can be bounded away from $0$ by a bound, which only depends on
\begin{itemize}
\item the Lipschitz constants of $\mu,\sigma$ and $f$ w.r.t. to the last $3$ components,
\item $L_{\xi}$ and $L_{\xi}\cdot L_{\sigma,z}$,
\end{itemize}
and which is monotonically decreasing in these values.
\end{rem}

\begin{rem} \label{bbbbrem}
As we have seen (\ref{bbbb}) implies that our decoupling field $u$ on $[t,T]$ satisfies
$$ L_{u,x}\leq L_{\xi,x}+C(T-t)^{\frac{1}{4}},$$
where $C$ is some constant which does not depend on $t$.
\end{rem}

\begin{rem} \label{xyz}
If we do not care about decoupling fields but are only interested in processes $X,Y,Z$ solving the forward and the backward equations together with $Y_T=\xi(X_T)$ for given $t_1\in[t,T],t_2:=T,X_{t_1}$, the above construction does provide existence but not uniqueness.

In order to have uniqueness we need an additional restriction, e.g. $(X,Y,Z)\in \mathbb{G}_{0}$. Under this condition we would get not only uniqueness but also the decoupling condition $Y_s=u(s,X_s)$.

Conversely the two conditions $X_{t_1}=x\in\mathbb{R}^n$ and $Y_s=u(s,X_s)$ would also suffice for uniqueness as we have seen (in fact we saw that this implies $(X,Y,Z)\in \mathbb{G}_{t_1}\subseteq\mathbb{G}_{0}$).

\end{rem}

\section{Some examples} \label{examples}
We first demonstrate that the assumption $L_{\sigma,z}\cdot L_{\xi,x}<1$ cannot be dropped or weakened to a larger bound.
\begin{exa} For instance, consider the forward backward problem
$$ X_t=x_0+\int_0^t(\sigma_0+Z_s)\dx W_s, $$
$$ Y_t=X_T-\int_t^TZ_s\dx W_s. $$
This means that $\mu=f=0$ and $\sigma(s,x,y,z)=\sigma_0+z$, where $\sigma_0\in\mathbb{R}\backslash\{0\}$, $\xi=\mathrm{Id}_{\mathbb{R}}$. We also assume that $X,Y,Z,W$ must all be real-valued. Observe $L_{\xi,x}=1$, $L_{\sigma,z}=1$ and therefore $L_{\xi,x}L_{\sigma,z}=1$. We claim that this problem cannot have an adapted solution, no matter how small $T$ is.\\
In fact, the forward equation implies
$$ X_T-X_t=\int_t^T(\sigma_0+Z_s)\dx W_s=\sigma_0(W_T-W_t)+\int_t^TZ_s\dx W_s $$
or
$$ X_T-\int_t^TZ_s\dx W_s=X_t+\sigma_0(W_T-W_t). $$
Together with the backward equation we obtain
$$ Y_t=X_t+\sigma_0(W_T-W_t)$$
which would mean
$$ Y_0-x_0=\sigma_0 W_T.$$
This cannot be true since $Y_0-x_0$ is a.s. constant and $\sigma_0 W_T$ is a non-degenerate Gaussian random variable.
\end{exa}
The requirement to choose $T-t$ small enough cannot be omitted either.
\begin{exa}
For $t\in[0,T)$ consider the following FBSDE on the interval $[t,T]$:
$$ X_s=x+\int_t^s Y_r\dx r, $$
$$ Y_s=X_T-\int_s^T Z_r\dx W_r, \quad s\in[t,T].$$
For $1>T-t$ the problem has a decoupling field
$$ u(s,x)=\frac{x}{1-(T-s)},\quad s\in [t,T], $$
such that
$$ X_s=x+(s-t)\frac{x}{1-(T-t)}=x\frac{1-(T-s)}{1-(T-t)}, $$
$$ Y_s=\frac{x}{1-(T-t)}, $$
$$ Z_s=0. $$
We will see later that this $u$ is unique among all decoupling fields on $[t,T]$ with $L_{u,x}<\infty$ and $\|u(\cdot,\cdot,0)\|_{\infty}<\infty$. \\
Note that $u(t,x)$ tends to infinity for $t\downarrow (T-1)$ and $x\neq 0$. Thus there is no decoupling field on $[T-1,T]$ with $L_{u,x}<\infty$ and $\|u(\cdot,\cdot,0)\|_{\infty}<\infty$. Note that here $L_{\sigma,z} = 0$.
\end{exa}

\section{Regularity} \label{regularity}

\begin{defi}
Let $u:[t,T]\times\Omega\times\mathbb{R}^n\rightarrow\mathbb{R}^m$ be a decoupling field to $\fbsde(\xi,(\mu,\sigma,f))$. We call $u$ \emph{weakly regular}, if
$L_{u,x}<L_{\sigma,z}^{-1}$ and $\|u(\cdot,\cdot,0)\|_{\infty}<\infty$.
\end{defi}

The decoupling field constructed in Theorem \ref{locexist} is weakly regular as we have seen. In practice however it is important to have explicit knowledge about the regularity of $(X,Y,Z)$. For instance, it is important to know in which spaces the processes live, and how they react to changes in the initial value. Specifically it can be very useful to have differentiability of $X,Y,Z$ w.r.t. the initial value.

\begin{defi}
Let $u:[t,T]\times\Omega\times\mathbb{R}^n\rightarrow\mathbb{R}^m$ be a weakly regular decoupling field to $\fbsde(\xi,(\mu,\sigma,f))$. We call $u$ \emph{strongly regular} if for all fixed $t_1,t_2\in[t,T]$, $t_1\leq t_2,$ the processes $X,Y,Z$ arising in the defining property of a decoupling field
are a.e. unique for each \emph{constant} initial value $X_{t_1}=x\in\mathbb{R}^n$ and satisfy
$$\sup_{s\in [t_1,t_2]}\mathbb{E}_{t_1,\infty}[|X_s|^2]+\sup_{s\in [t_1,t_2]}\mathbb{E}_{t_1,\infty}[|Y_s|^2]
+\mathbb{E}_{t_1,\infty}\left[\int_{t_1}^{t_2}|Z_s|^2\dx s\right]<\infty\quad\forall x\in\mathbb{R}^n.$$
In addition they must be measurable as functions of $(x,s,\omega)$ and
even weakly differentiable w.r.t. $x$ such that
$$\esssup_{x\in\mathbb{R}^n}\sup_{v\in S^{n-1}}\sup_{s\in [t_1,t_2]}\mathbb{E}_{t_1,\infty}\left[\left|\frac{\dx}{\dx x}X_s\right|^2_v\right]<\infty,$$
$$\esssup_{x\in\mathbb{R}^n}\sup_{v\in S^{n-1}}\sup_{s\in [t_1,t_2]}\mathbb{E}_{t_1,\infty}\left[\left|\frac{\dx}{\dx x}Y_s\right|^2_v\right]<\infty,$$
$$\esssup_{x\in\mathbb{R}^n}\sup_{v\in S^{n-1}}\mathbb{E}_{t_1,\infty}\left[\int_{t_1}^{t_2}\left|\frac{\dx}{\dx x}Z_s\right|^2_v\dx s\right]<\infty.$$
We say that a decoupling field on $[t,T]$ is \emph{strongly regular} on a subinterval $[t_1,t_2]\subseteq[t,T]$ if $u$ restricted to $[t_1,t_2]$ is a strongly regular decoupling field for $\fbsde(u(t_2,\cdot),(\mu,\sigma,f))$.
\end{defi}

\begin{rem}\label{locexistext}
We have seen in the proof of Theorem \ref{locexist} that the constructed decoupling field $u$ is strongly regular (recall the definitions of $\|\cdot\|_w$ and $\|\cdot\|_s$ in the proof, and  replace $\lambda$ with $x$).
\end{rem}

\begin{lem}\label{gluestrongly regular}
Let $g,\mu,\sigma,f$ be as in Theorem \ref{locexist}, let $0\leq s<t<T$ and let $u$ be a weakly regular decoupling field for $\fbsde (\xi,(\mu,\sigma,f))$ on $[s,T]$.\\
If $u$ is strongly regular on $[s,t]$ and $T-t$ is small enough as required in Theorem \ref{locexist} resp. Remark \ref{hchoice}, then $u$ is strongly regular on $[s,T]$.
\end{lem}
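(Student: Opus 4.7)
The plan is to first upgrade $u|_{[t,T]}$ from weakly regular to strongly regular, then glue this with the existing strong regularity on $[s,t]$. For the upgrade, since $T-t$ satisfies the hypotheses of Theorem~\ref{locexist}, that theorem applied to the FBSDE on $[t,T]$ with terminal condition $\xi$ produces a weakly regular decoupling field $\bar u$ which, by Remark~\ref{locexistext}, is strongly regular on $[t,T]$. The uniqueness clause of Theorem~\ref{locexist} applies to \emph{weakly} regular decoupling fields, so $\bar u=u|_{[t,T]}$ and therefore $u$ is strongly regular on $[t,T]$.

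Next fix $t_1,t_2\in[s,T]$ with $t_1\le t_2$ and a constant $x\in\mathbb{R}^n$. If $[t_1,t_2]\subseteq[s,t]$ or $[t_1,t_2]\subseteq[t,T]$, strong regularity is immediate. Otherwise $t_1<t<t_2$, and I glue. On $[t_1,t]$, strong regularity of $u$ on $[s,t]$ produces unique processes $(X^{(1)},Y^{(1)},Z^{(1)})$, measurable in $(x,r,\omega)$ and weakly differentiable in $x$, with the prescribed essential-supremum bounds; in particular $X^{(1)}_t$ is $\mathcal{F}_t$-measurable, $L^2$-integrable conditionally on $\mathcal{F}_{t_1}$ uniformly in $x$, and weakly differentiable in $x$ with the corresponding bound on $\frac{\dx}{\dx x}X^{(1)}_t$. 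On $[t,t_2]$ let $\tilde X,\tilde Y,\tilde Z$ denote the random fields on $\mathbb{R}^n\times[t,T]\times\Omega$ constructed by the Picard iteration in the proof of Theorem~\ref{locexist} for the FBSDE on $[t,T]$, where the parameter $\lambda\in\mathbb{R}^n$ plays the role of the initial value at time $t$. Define
\[
X_r:=\tilde X_r(X^{(1)}_t(x,\cdot),\cdot),\quad Y_r:=\tilde Y_r(X^{(1)}_t(x,\cdot),\cdot),\quad Z_r:=\tilde Z_r(X^{(1)}_t(x,\cdot),\cdot),\qquad r\in[t,t_2],
\]
and concatenate with $(X^{(1)},Y^{(1)},Z^{(1)})$ on $[t_1,t]$.

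Measurability in $(x,r,\omega)$ is inherited from that of the building blocks, and the forward, backward and decoupling equations on $[t,t_2]$ hold for the glued triplet because they hold for $\tilde X,\tilde Y,\tilde Z$ in the parameter, exactly as in the passage at the end of the proof of Theorem~\ref{locexist} that establishes $u$ is a decoupling field. Uniqueness of $(X,Y,Z)$ for the constant initial value $X_{t_1}=x$ follows from strong-regularity uniqueness on $[t_1,t]$, combined on $[t,t_2]$ with the $\Omega_1\times\Omega_2$-decomposition argument from the proof of Theorem~\ref{locexist}, which yields uniqueness under the decoupling condition once the initial value is in the relevant conditional $L^2$ class. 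The conditional-$L^2$ bounds on $|X|^2,|Y|^2,\int|Z|^2\dx r$ over $[t,t_2]$ transfer from the $\mathbb{E}_{t,\infty}$-bounds supplied by strong regularity on $[t,T]$ to $\mathbb{E}_{t_1,\infty}$-bounds via the tower property $\mathbb{E}_{t_1}[\mathbb{E}_t[\,\cdot\,]]=\mathbb{E}_{t_1}[\,\cdot\,]$ and the fact that freezing $\omega_1$ leaves $X^{(1)}_t(x,\cdot)$ constant in the second component.

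The main obstacle is weak differentiability in $x$ on $[t,t_2]$. I apply Lemma~\ref{chainrule} to the compositions $\tilde X_r(X^{(1)}_t(x,\cdot),\cdot)$, and similarly for $\tilde Y,\tilde Z$, using weak differentiability of $\tilde X,\tilde Y,\tilde Z$ in $\lambda$ (from strong regularity of $u$ on $[t,T]$) and of $X^{(1)}_t$ in $x$ (from strong regularity on $[s,t]$). This produces identities of the schematic form
\[
\tfrac{\dx}{\dx x}X_r=\bigl(\Delta^{X^{(1)}_t}_{\lambda}\tilde X_r\bigr)\tfrac{\dx}{\dx x}X^{(1)}_t,
\]
together with analogous identities for $Y$ and $Z$. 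Taking essential suprema over $x\in\mathbb{R}^n$ and $v\in S^{n-1}$, and again using the tower property to pass from $\mathbb{E}_{t,\infty}$ to $\mathbb{E}_{t_1,\infty}$, combines the $\esssup_\lambda$-bounds on the weak $\lambda$-derivatives of $\tilde X,\tilde Y,\tilde Z$ with the $\esssup_x$-bound on $\frac{\dx}{\dx x}X^{(1)}_t$ to give the three essential-supremum inequalities required by strong regularity. The one delicate point is that Lemma~\ref{chainrule} is phrased for Lipschitz $g$; for $\tilde X,\tilde Y,\tilde Z$ this is arranged by first passing to Lipschitz-continuous modifications in $\lambda$, which exist by the same modification construction used after Lemma~\ref{glue}. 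Once these ingredients are in place the glued decoupling field is strongly regular on $[s,T]$.
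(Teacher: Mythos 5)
Your first step (identifying $u|_{[t,T]}$ with the strongly regular field produced by Theorem~\ref{locexist} and Remark~\ref{locexistext}) is fine, and your reduction to the case $t_1<t<t_2$ with gluing at $t$ matches the paper's strategy. The gap is in how you obtain weak differentiability in $x$ on $[t,t_2]$. You compose the fields $\tilde X,\tilde Y,\tilde Z$, constructed with the \emph{deterministic} parameter $\lambda$ as initial value, with the random initial value $X^{(1)}_t(x,\cdot)$, and then invoke Lemma~\ref{chainrule}. That lemma requires the outer function $g$ to satisfy $L_{g,x}<\infty$, i.e.\ Lipschitz continuity in the parameter with a constant uniform in $\omega$. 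For $\tilde X_r$ and $\tilde Z_r$ at intermediate times no such bound is available: the construction in Theorem~\ref{locexist} only controls conditional second moments of the weak $\lambda$-derivatives ($\esssup_\lambda\sup_v\mathbb{E}_{\hat t,\infty}[\,\cdot\,]$), not an $\omega$-uniform Lipschitz constant, and the "Lipschitz modification" device used after Lemma~\ref{glue} presupposes an a.s.\ Lipschitz bound that is uniform over $\omega$ — it cannot manufacture one. Worse, even the schematic identity $\frac{\dx}{\dx x}X_r=(\Delta^{X^{(1)}_t}_{\lambda}\tilde X_r)\frac{\dx}{\dx x}X^{(1)}_t$ is problematic at a more basic level: the weak $\lambda$-derivative of $\tilde X_r$ is defined only up to Lebesgue-null sets in $\lambda$, while the law of $X^{(1)}_t(x,\cdot)$ need not be absolutely continuous, so evaluating such an object along $X^{(1)}_t$ is not well defined — this is exactly the obstruction the paper itself points out in the Markovian section when it refuses to write $Z_s=u_x(s,X_s)\sigma(\ldots)$. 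So the chain-rule step, as you set it up, does not go through.

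The paper's proof avoids the composition entirely, and this is why Theorem~\ref{locexist} was formulated the way it is: its Picard construction accepts a \emph{random}, $x$-parametrized initial condition $X_t(\lambda,\omega)$ that is $\mathcal{B}(\mathbb{R}^n)\otimes\mathcal{F}_t$-measurable, weakly differentiable in the parameter, with $\esssup_\lambda\sup_v\mathbb{E}_{\hat t,\infty}[|\frac{\dx}{\dx\lambda}X_t|_v^2]<\infty$ for a conditioning time $\hat t\le t$. One therefore feeds the terminal-time-$t$ value $X^{(1)}_t$ of the solution on $[t_1,t]$ (which has exactly these properties by strong regularity on $[s,t]$, with $\hat t=t_1$) directly into that construction on $[t,T]$; the resulting $\check X,\check Y,\check Z$ come with weak differentiability in $x$ and the $\mathbb{E}_{t_1,\infty}$-bounds already attached, and are then identified with the original $X,Y,Z$ on $[t,T]$ by the uniqueness argument inside the proof of Theorem~\ref{locexist}. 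To repair your argument you should replace the composition/chain-rule step by this direct use of the random-initial-condition version of the construction (or else prove a conditional-moment chain rule and absolute-continuity statement that you currently do not have).
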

\begin{proof}
We only need to demonstrate the strong regularity of $u$ for the case $s\leq t_1\leq t\leq t_2=T$.
The processes $X,Y,Z$ corresponding to the interval $[t_1,T]$ with initial value $X_{t_1}=x\in\mathbb{R}^n$ will obviously be
unique at least on $[t_1,t]$ and satisfy
$$\sup_{s\in [t_1,t]}\mathbb{E}_{t_1,\infty}[|X_s|^2]+\sup_{s\in [t_1,t]}\mathbb{E}_{t_1,\infty}[|Y_s|^2]
+\mathbb{E}_{t_1,\infty}\left[\int_{t_1}^{t}|Z_s|^2\dx s\right]<\infty\quad\forall x\in\mathbb{R}^n,$$
$$\esssup_{x\in\mathbb{R}^n}\sup_{v\in S^{n-1}}\sup_{r\in [t_1,t]}\mathbb{E}_{t_1,\infty}\left[\left|\frac{\dx}{\dx x}X_r\right|^2_v\right]<\infty,$$
$$\esssup_{x\in\mathbb{R}^n}\sup_{v\in S^{n-1}}\sup_{r\in [t_1,t]}\mathbb{E}_{t_1,\infty}\left[\left|\frac{\dx}{\dx x}Y_r\right|^2_v\right]<\infty,$$
$$\esssup_{x\in\mathbb{R}^n}\sup_{v\in S^{n-1}}\mathbb{E}_{t_1,\infty}\left[\int_{t_1}^{t}\left|\frac{\dx}{\dx x}Z_r\right|^2_v\dx r\right]<\infty.$$
In particular $\esssup_{x\in\mathbb{R}^n}\sup_{v\in S^{n-1}}\mathbb{E}_{t_1,\infty}\left[\left|\frac{\dx}{\dx x}X_t\right|^2_v\right]<\infty$ and also $\mathbb{E}_{t_1,\infty}[|X_t|^2]<\infty$ for all $x\in\mathbb{R}^n$.
The map $(x,\omega)\mapsto X_t(x,\omega)$ must be measurable since $(x,s,\omega)\mapsto X_s(x,\omega)$ is measurable and $X$ is continuous in time. \\
Now according to the proof of Theorem \ref{locexist} construct for this $X_t:\mathbb{R}^n\times\Omega\rightarrow\mathbb{R}^n$ progressive $\check{X},\check{Y},\check{Z}$ on $\mathbb{R}^n\times [t,T]\times\Omega$ such that
$$\sup_{s\in [t,T]}\mathbb{E}_{t_1,\infty}[|\check{X}_s|^2]+\sup_{s\in [t,T]}\mathbb{E}_{t_1,\infty}[|\check{Y}_s|^2]
+\mathbb{E}_{t_1,\infty}\left[\int_{t}^{T}|\check{Z}_s|^2\dx s\right]<\infty\quad\forall x\in\mathbb{R}^n,$$
$$\esssup_{x\in\mathbb{R}^n}\sup_{v\in S^{n-1}}\sup_{r\in [t,T]}\mathbb{E}_{t_1,\infty}\left[\left|\frac{\dx}{\dx x}\check{X}_r\right|^2_v\right]<\infty,$$
$$\esssup_{x\in\mathbb{R}^n}\sup_{v\in S^{n-1}}\sup_{r\in [t,T]}\mathbb{E}_{t_1,\infty}\left[\left|\frac{\dx}{\dx x}\check{Y}_r\right|^2_v\right]<\infty,$$
$$\esssup_{x\in\mathbb{R}^n}\sup_{v\in S^{n-1}}\mathbb{E}_{t_1,\infty}\left[\int_{t}^{T}\left|\frac{\dx}{\dx x}\check{Z}_r\right|^2_v\dx r\right]<\infty,$$
and such that $\check{X},\check{Y},\check{Z}$ solve $\fbsde (\xi,(\mu,\sigma,f))$ on $[t,T]$ with $\check{Y}_r=u(r,\check{X}_r)$ and $\check{X}_t=X_t$ a.s.. We have seen in the proof of Theorem \ref{locexist} that such $\check{X},\check{Y},\check{Z}$ are unique. \\
As just mentioned $\check{X},\check{Y},\check{Z}$ are progressive, solve $\fbsde (\xi,(\mu,\sigma,f))$ on $[t,T]$ with $\check{Y}_r=u(r,\check{X}_r)$ and $\check{X}_t=X_t$ a.s.. These properties are also satisfied by $X,Y,Z$ on $[t,T]$. In the proof of Theorem \ref{locexist} we have seen that this implies $(\check{X},\check{Y},\check{Z})=(X,Y,Z)$ on $[t,T]$. This proves strong regularity of $u$.
\end{proof}

\section{Extension to large intervals} \label{global}

In the following we employ local results from the previous sections to obtain global existence and uniqueness in a sense specified later. We will extensively use a simple basic argument which we will refer to as \emph{small interval induction}.

\begin{lem}[Small interval induction, backward]
\label{sii}
Let $T_1<T_2$ be real numbers and let $S\subseteq [T_1,T_2]$ s.t.
\begin{itemize}
\item $T_2\in S$,
\item there exists an $h>0$ s.t. $[s-h,s]\cap [T_1,T_2]\subseteq S$ for all $s\in S$.
\end{itemize}
Then $S=[T_1,T_2]$. In particular $T_1\in S$.
\end{lem}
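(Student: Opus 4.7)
The plan is a finite downward induction that peels an interval of length $h$ off the right end of $[T_1,T_2]$ at each stage. Concretely, I would prove by induction on $k\in\mathbb{N}_0$ the statement
$$[\max(T_1,\,T_2-kh),\,T_2]\subseteq S.$$

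The base case $k=0$ is precisely the hypothesis $T_2\in S$. For the inductive step, assume the claim holds for $k$, and set $s_k:=\max(T_1,\,T_2-kh)$. By the induction hypothesis $s_k\in S$, so the second hypothesis applied at $s_k$ yields $[s_k-h,\,s_k]\cap[T_1,T_2]\subseteq S$. Since $[s_k-h,s_k]\cap[T_1,T_2]=[\max(T_1,\,T_2-(k+1)h),\,s_k]$, taking the union with $[s_k,T_2]$ (which is already in $S$) gives $[\max(T_1,\,T_2-(k+1)h),\,T_2]\subseteq S$, completing the step.

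Choosing $k$ large enough so that $T_2-kh\leq T_1$, for instance $k:=\lceil (T_2-T_1)/h\rceil$, forces $\max(T_1,\,T_2-kh)=T_1$ and therefore $[T_1,T_2]\subseteq S$. Combined with the inclusion $S\subseteq[T_1,T_2]$ from the hypotheses, this gives $S=[T_1,T_2]$, and in particular $T_1\in S$.

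There is no real obstacle in this argument; the only slight care needed is to cap the left endpoint of the induction interval at $T_1$ (via the $\max$) so that the second property of $S$, which is stated only for points in $[T_1,T_2]$, is applicable at each step and the induction does not overshoot the left endpoint of the interval.
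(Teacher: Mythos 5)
Your argument is correct, and it takes a different route from the paper. You run a finite downward induction in explicit steps of length $h$, showing $[\max(T_1,\,T_2-kh),\,T_2]\subseteq S$ for every $k$ and then choosing $k\geq (T_2-T_1)/h$ to reach $T_1$; the capping by $\max(T_1,\cdot)$ is indeed exactly the point that needs care, and you handle it correctly, since in the degenerate case $T_2-kh<T_1$ the intersection $[s_k-h,s_k]\cap[T_1,T_2]$ collapses to $\{T_1\}$ and the identity you use still holds. The paper instead argues by a completeness-type ("continuous induction") argument: it sets $s_{\mathrm{min}}:=\inf\{s\in S:\,[s,T_2]\subseteq S\}$, notes $(s_{\mathrm{min}},T_2]\subseteq S$, and derives a contradiction from $s_{\mathrm{min}}>T_1$ by applying the $h$-step property at a point slightly above $s_{\mathrm{min}}$, finally using one more application to get $T_1\in S$. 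Both proofs rest on the uniformity of $h$; your version is more elementary and constructive (it even yields the explicit bound $\lceil (T_2-T_1)/h\rceil$ on the number of steps and avoids any infimum or limit argument), while the paper's infimum formulation is the pattern one would need if the step size were allowed to degenerate locally, which is not the case here. Either proof is acceptable.
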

\begin{proof}
Let $s_{\mathrm{min}}$ be the infimum of all $s\in S$ such that $[s,T_2]\subseteq S$. Obviously $(s_{\mathrm{min}},T_2]\subseteq S$. We claim that $s_{\mathrm{min}}=T_1$. Assume otherwise. Then $(s_{\mathrm{min}}+h/2)\wedge T_2\in S$ implies $[T_1\vee(s_{\mathrm{min}}-h/2),(s_{\mathrm{min}}+h/2)\wedge T_2]\subseteq S$, which in turn leads to
$[T_1\vee(s_{\mathrm{min}}-h/2),T_2]\subseteq S$ contradicting the definition of $s_{\mathrm{min}}$. Thus $s_{\mathrm{min}}=T_1$. In particular $(T_1+h)\wedge T_2\in S$, which implies $T_1\in S$.
\end{proof}

Similarly we can show:

\begin{lem}[Small interval induction, forward]
\label{siif}
Let $T_1<T_2$ be real numbers and let $S\subseteq [T_1,T_2]$ s.t.
\begin{itemize}
\item $T_1\in S$,
\item there exists an $h>0$ s.t. $[s,s+h]\cap [T_1,T_2]\subseteq S$ for all $s\in S$.
\end{itemize}
Then $S=[T_1,T_2]$. In particular $T_2\in S$.
\end{lem}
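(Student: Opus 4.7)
The plan is to mirror the argument used for Lemma \ref{sii}, reversing the direction of the induction. I would define
$$s_{\mathrm{max}}:=\sup\{s\in S\,|\,[T_1,s]\subseteq S\},$$
which is well-defined since $T_1\in S$ guarantees the set in braces is nonempty. By construction $[T_1,s_{\mathrm{max}})\subseteq S$, and the goal is to prove $s_{\mathrm{max}}=T_2$.

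The core step is a contradiction argument. Suppose $s_{\mathrm{max}}<T_2$. Then I would pick the point $s^{\ast}:=T_1\vee(s_{\mathrm{max}}-h/2)$, which lies in $[T_1,s_{\mathrm{max}})$ and therefore in $S$. Applying the hypothesis to $s^{\ast}$ yields
$$[s^{\ast},s^{\ast}+h]\cap[T_1,T_2]\subseteq S,$$
and since $s^{\ast}+h\geq s_{\mathrm{max}}+h/2>s_{\mathrm{max}}$, this produces a point of $S$ strictly larger than $s_{\mathrm{max}}$ with $[T_1,\,\cdot\,]\subseteq S$, contradicting the definition of $s_{\mathrm{max}}$. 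Hence $s_{\mathrm{max}}=T_2$.

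Finally, to conclude $T_2\in S$ itself (not merely that $[T_1,T_2)\subseteq S$), I would apply the hypothesis once more to the point $T_1\vee(T_2-h/2)\in S$, which gives $T_2\in[T_1\vee(T_2-h/2),T_1\vee(T_2-h/2)+h]\cap[T_1,T_2]\subseteq S$. There is no real obstacle here; the only point requiring mild care is handling the possibility that $T_2-h/2<T_1$, which is why I take the max with $T_1$ throughout. The proof is essentially symmetric to the backward version, so I expect no substantive difficulty.
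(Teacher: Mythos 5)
Your proposal is correct and is essentially the paper's intended argument: the paper proves only the backward version (Lemma \ref{sii}) via an infimum-plus-contradiction with step $h/2$ and states the forward version with ``similarly,'' and your supremum $s_{\mathrm{max}}$ argument is exactly that mirror image, including the endpoint step and the $T_1\vee(\cdot)$ truncation. No gaps of substance.
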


Here is a first application of this technique.

\begin{cor}[Global uniqueness]\label{uniq}
Let $\mu,\sigma,f,\xi$ be as in Theorem \ref{locexist}. Assume that there are two weakly regular decoupling fields $u^{(1)},u^{(2)}$ to the corresponding problem on some interval $[t,T]$. Then $u^{(1)}=u^{(2)}$ up to modifications.
\end{cor}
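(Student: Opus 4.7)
The plan is to apply the backward small interval induction of Lemma~\ref{sii} to the set
\[
S := \{s \in [t,T] \mid u^{(1)} = u^{(2)} \text{ up to modification on } [s,T]\}.
\]
Since $u^{(1)}(T,\cdot) = \xi = u^{(2)}(T,\cdot)$ by definition of decoupling field, we have $T \in S$ immediately. The only thing to verify is the existence of a single $h>0$, independent of $s$, such that $[s-h,s] \cap [t,T] \subseteq S$ whenever $s \in S$; Lemma~\ref{sii} will then yield $S = [t,T]$, in particular $t \in S$.

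Because both $u^{(i)}$ are weakly regular, set
\[
L := \max\bigl(L_{u^{(1)},x},\,L_{u^{(2)},x}\bigr) < L_{\sigma,z}^{-1}.
\]
According to Remark~\ref{hchoice}, one can choose $h>0$ depending only on $L$, $L_{\sigma,z}$ and the Lipschitz constants of $\mu,\sigma,f$, such that Theorem~\ref{locexist} applies on any interval of length at most $h$ to any FBSDE with the same driver data and any terminal condition $\xi'$ satisfying $L_{\xi',x} \leq L$ and $\|\xi'(\cdot,0)\|_{\infty} < \infty$; the resulting weakly regular decoupling field is unique up to modification on that interval.

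Now fix $s \in S$ and set $s' := (s-h) \vee t$. The common value $\xi' := u^{(1)}(s,\cdot) = u^{(2)}(s,\cdot)$ has $L_{\xi',x} \leq L$ and, by weak regularity of either $u^{(i)}$, $\|\xi'(\cdot,0)\|_{\infty} < \infty$. For each $i \in \{1,2\}$, the restriction of $u^{(i)}$ to $[s',s]$ is a decoupling field for $\fbsde(\xi',(\mu,\sigma,f))$ on $[s',s]$: the existence of triples $(X,Y,Z)$ for any $t_1 \leq t_2$ in $[s',s]$ is directly inherited from the decoupling property of $u^{(i)}$ on $[t,T]$, and the required terminal value at the right endpoint matches $\xi'$ via the decoupling condition $Y_{s} = u^{(i)}(s, X_s)$. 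Weak regularity of the restriction is also inherited. Theorem~\ref{locexist} applied on $[s',s]$ then gives $u^{(1)} = u^{(2)}$ up to modification on $[s',s]$, and combined with $s \in S$ this yields $[s',s] \subseteq S$, hence $[s-h,s]\cap[t,T] \subseteq S$.

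The only subtle point is that the step size $h$ must not depend on $s$; this is secured by bounding $L_{u^{(i)}(s,\cdot),x}$ by the single constant $L < L_{\sigma,z}^{-1}$ coming from the global weak regularity hypothesis, so Remark~\ref{hchoice} yields a uniform $h$. Everything else reduces to unpacking the definition of a decoupling field on sub-intervals and invoking the uniqueness already built into Theorem~\ref{locexist}.
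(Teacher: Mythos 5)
Your proof is correct and follows essentially the same route as the paper: backward small interval induction on the set where the two fields agree, with the induction step supplied by the local uniqueness of Theorem \ref{locexist} applied to the sub-problem with terminal condition $u^{(1)}(s,\cdot)=u^{(2)}(s,\cdot)$, and with the step size $h$ made uniform via Remark \ref{hchoice} using the bound $\max(L_{u^{(1)},x},L_{u^{(2)},x})<L_{\sigma,z}^{-1}$. The only (immaterial) difference is that you let $S$ consist of times $s$ with agreement on all of $[s,T]$ rather than just at $s$.
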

\begin{proof}
Let $S\subseteq [t,T]$ be the set of all times $s$, s.t. $u^{(1)}(s,\cdot)=u^{(2)}(s,\cdot)$ a.e.
\begin{itemize}
\item Obviously $T\in S$.
\item Let $s\in S$ be arbitrary. According to Theorem \ref{locexist} there exists an $h>0$ such that there is a unique decoupling field $\tilde{u}$ to $\fbsde (u^{(1)}(s,\cdot),(\mu,\sigma,f))=\fbsde (u^{(2)}(s,\cdot),(\mu,\sigma,f))$ on the interval $[(s-h)\vee t,s]$ s.t. $L_{\tilde{u},x}<L_{\sigma,z}^{-1}$, $\|\tilde{u}(\cdot,\cdot,0)\|_{\infty}<\infty$, and where $h$ can be chosen independent of $s$. This means that $\tilde{u},u^{(1)},u^{(2)}$ coincide on $[(s-h)\vee t,s]$, hence $[(s-h)\vee t,s]\subseteq S$.
\end{itemize}
This shows $S=[t,T]$ by small interval induction.
\end{proof}

After having shown uniqueness of $u$ we show its strong regularity.

\begin{cor}[Global regularity]\label{regul}
Let $\mu,\sigma,f,\xi$ be as in Theorem \ref{locexist}. Assume that there exists a weakly regular decoupling field $u$ to this problem on some interval $[t,T]$.Then $u$ is strongly regular.
\end{cor}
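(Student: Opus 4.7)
The plan is to proceed by backward small interval induction (Lemma \ref{sii}) applied to the set
$$S = \{s \in [t,T] \,:\, u \text{ is strongly regular on the subinterval } [s,T]\}.$$
Trivially $T \in S$, and the goal is to show $t \in S$, which is the statement of the corollary. The overall strategy is classical for this paper: use Theorem \ref{locexist} plus Corollary \ref{uniq} to produce strong regularity on very short intervals, then patch.

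For the inductive step, fix $s \in S$. By Remark \ref{hchoice} there is a threshold $h > 0$ depending only on the Lipschitz constants of $\mu,\sigma,f$ and on $L_{u,x}$, hence uniform in $s$, which controls the length of interval on which Theorem \ref{locexist} can be applied to $\fbsde(u(s,\cdot),(\mu,\sigma,f))$. Its hypotheses are satisfied at every intermediate terminal of the form $u(r,\cdot)$ because $L_{u(r,\cdot),x} \le L_{u,x} < L_{\sigma,z}^{-1}$ and $\|u(r,\cdot,0)\|_\infty \le \|u(\cdot,\cdot,0)\|_\infty < \infty$. For any $s' \in [(s-h)\vee t,\, s]$, Theorem \ref{locexist} together with Remark \ref{locexistext} produces a strongly regular decoupling field on $[s',s]$ for the problem with terminal $u(s,\cdot)$, and Corollary \ref{uniq} identifies it with $u|_{[s',s]}$. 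Hence $u$ is strongly regular on the subinterval $[s',s]$.

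It remains to upgrade strong regularity on $[s',s]$ and on $[s,T]$ (the latter by inductive assumption) to strong regularity on $[s',T]$. For this I iterate Lemma \ref{gluestrongly regular} along a partition $s = r_0 < r_1 < \cdots < r_N = T$ with $r_{j+1}-r_j \le h$ for every $j$. At stage $j$, view $u$ restricted to $[s',r_{j+1}]$ as a weakly regular decoupling field for the FBSDE with terminal $u(r_{j+1},\cdot)$; the already-obtained strong regularity on the sub-subinterval $[s',r_j]$ together with the smallness $r_{j+1}-r_j \le h$ lets Lemma \ref{gluestrongly regular} conclude strong regularity on $[s',r_{j+1}]$. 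After $N$ such telescoping applications we reach strong regularity on $[s',T]$, so $s' \in S$. Lemma \ref{sii} then closes the argument.

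The main obstacle is that Lemma \ref{gluestrongly regular} as stated only extends strong regularity by a single ``small'' increment on the right, from $[s,t]$ to $[s,T]$ with $T-t$ small; in our induction the already-regular interval $[s,T]$ has length that eventually exceeds any fixed $h$, so a single application of the gluing lemma cannot bridge the two pieces. The remedy is precisely the inner cascade above, which works because Remark \ref{hchoice} provides one threshold $h$ valid for every intermediate terminal $u(r_j,\cdot)$ simultaneously, since all such terminals share the uniform bounds $L_{u(r_j,\cdot),x} \le L_{u,x}$ and $\|u(r_j,\cdot,0)\|_\infty \le \|u(\cdot,\cdot,0)\|_\infty$. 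Thus the outer backward induction shifts the left endpoint leftward by $h$ while the inner rightward telescoping re-establishes strong regularity all the way up to $T$; iterating both yields strong regularity on the whole of $[t,T]$.
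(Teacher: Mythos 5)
Your argument is correct, and its working part is in essence the paper's own proof: the paper establishes the corollary by a single \emph{forward} small interval induction (Lemma \ref{siif}) on the set of $s$ with $u$ strongly regular on $[t,s]$, starting from the trivial base case $[t,t]$ (take $X=x$, $Y=u(t,\cdot)$, $Z=0$) and advancing by a uniform step $h$ via Lemma \ref{gluestrongly regular} and Remark \ref{hchoice}, using exactly the uniform bounds $L_{u(r,\cdot),x}\leq L_{u,x}<L_{\sigma,z}^{-1}$ and $\|u(r,\cdot,0)\|_\infty\leq\|u(\cdot,\cdot,0)\|_\infty$ that you invoke. Your ``inner cascade'' along the partition $s=r_0<\dots<r_N=T$ is precisely this forward induction, and each of its steps is a legitimate application of Lemma \ref{gluestrongly regular} (the restriction of $u$ to $[s',r_{j+1}]$ is indeed a weakly regular decoupling field for the terminal condition $u(r_{j+1},\cdot)$, which satisfies the hypotheses of Theorem \ref{locexist}). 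What differs is only the packaging, and it is redundant: the outer backward induction never actually uses its inductive hypothesis (strong regularity on $[s,T]$ is re-derived from scratch by the cascade), and the preliminary step producing strong regularity on $[s',s]$ via Theorem \ref{locexist}, Remark \ref{locexistext} and Corollary \ref{uniq} can be dropped by starting the cascade at the degenerate interval $[s',s']$, where strong regularity is trivial. Stripping these two layers away, your proof collapses to the paper's one-line forward induction; so the extra structure buys nothing, but it also introduces no gap.
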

\begin{proof}
Let $S\subseteq [t,T]$ be the set of all times $s\in[t,T]$ s.t. $u$ is strongly regular on $[t,s]$.
\begin{itemize}
\item Obviously $t\in S$, since we can choose $Z=0$, $X=x$, $Y=u(t,\cdot)$.
\item Let $s\in S$ be arbitrary. According to Lemma \ref{gluestrongly regular} there exists an $h>0$ s.t. $u$ is strongly regular on $[t,(s+h)\wedge T]$ since $L_{u((s+h)\wedge T,\cdot)}<\infty$ and $L_{\sigma,z}L_{u((s+h)\wedge T,\cdot)}<1$. Recalling Remark \ref{hchoice} and the requirements $L_{u,x}<\infty$, $L_{\sigma,z}L_{u,x}<1$, we can choose $h$ independent of $s$.
\end{itemize}
This shows $S=[t,T]$ by small interval induction.
\end{proof}

Notice that Corollary \ref{uniq} only provides uniqueness of weakly regular decoupling fields, not uniqueness of processes $(X,Y,Z)$ solving the FBSDE in the classical sense. However, we can show:

\begin{cor}\label{uniqXYZ}
Let $\mu,\sigma,f,\xi$ be as in Theorem \ref{locexist}. Assume that there exists a weakly regular decoupling field $u$ of the corresponding FBSDE on some interval $[t,T]$.\\ Then for any initial condition $X_t=x\in\mathbb{R}^n$ there is a unique $\mathbb{G}_0$ - solution $X,Y,Z$ of the FBSDE on $[t,T]$.
\end{cor}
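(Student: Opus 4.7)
The plan is to bootstrap the local results of Theorem \ref{locexist} to the whole interval $[t,T]$ via the small interval induction lemmas, exploiting the strong regularity of $u$ guaranteed by Corollary \ref{regul}. The key observation is that, by weak regularity, $L_{u(s,\cdot),x}$ is uniformly bounded by $L_{u,x} < L_{\sigma,z}^{-1}$ for every $s \in [t,T]$, so Remark \ref{hchoice} yields a uniform step size $h > 0$ that works for every subproblem with terminal condition $u(s,\cdot)$.

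For existence, I would use forward small interval induction (Lemma \ref{siif}). Partition $[t,T]$ as $t = t_0 < t_1 < \cdots < t_N = T$ with $t_{k+1} - t_k \leq h$. Starting from the deterministic $X_t = x$, on each subinterval $[t_k, t_{k+1}]$ apply the construction of Theorem \ref{locexist} to the problem with terminal condition $u(t_{k+1},\cdot)$ and $\mathcal{F}_{t_k}$-measurable initial value $X_{t_k}$; strong regularity of $u$ on $[t_k, t_{k+1}]$ produces processes $(X^{(k)}, Y^{(k)}, Z^{(k)})$ solving the local FBSDE with $Y^{(k)}_s = u(s, X^{(k)}_s)$ and with the finite $\mathbb{G}_{t_k}$-type bounds that in particular ensure $X_{t_{k+1}}$ is square integrable and can feed the next step. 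Concatenating the pieces via Lemma \ref{glue} yields $(X,Y,Z)$ on $[t,T]$ solving the FBSDE with $X_t = x$ and $Y_s = u(s,X_s)$ throughout; that this triplet actually lies in $\mathbb{G}_0$ follows by iterating standard Lipschitz SDE/BSDE a priori estimates across the finitely many subintervals, using $\|u(\cdot,\cdot,0)\|_\infty < \infty$ to control $Y$ and $Z$ at each step.

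For uniqueness, let $(X,Y,Z) \in \mathbb{G}_0$ be any solution with $X_t = x$, and let $S := \{s \in [t,T] : Y_s = u(s, X_s) \text{ a.s.}\}$. Trivially $T \in S$ since $Y_T = \xi(X_T) = u(T,X_T)$. For $s \in S$, the restriction of $(X,Y,Z)$ to $[(s-h)\vee t,s]$ solves an FBSDE with terminal condition $u(s,\cdot)$ and, being in $\mathbb{G}_0$, belongs to the localized $\mathbb{G}_\tau$-type space used in Theorem \ref{locexist}; the argument there establishing $Y_\tau = u(\tau, X_\tau)$ for arbitrary square-integrable $\mathcal{F}_\tau$-measurable initial data then yields $[(s-h)\vee t,s] \subseteq S$. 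Backward small interval induction (Lemma \ref{sii}) forces $S=[t,T]$, so $Y_s = u(s,X_s)$ everywhere; on each short subinterval the uniqueness of local $\mathbb{G}$-solutions from Theorem \ref{locexist} then forces $(X,Y,Z)$ to coincide with the triplet built in the existence part. The main obstacle I foresee is verifying that a global $\mathbb{G}_0$-solution genuinely restricts to a $\mathbb{G}_\tau$-solution on each small subinterval in the precise sense needed to invoke the local uniqueness, which ultimately rests on conditional versions of the standard It\^o estimates and on the uniformity of the step size $h$ from Remark \ref{hchoice}.
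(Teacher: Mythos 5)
Your proposal is correct and follows essentially the same route as the paper: uniqueness is obtained by showing the decoupling condition $Y_s=u(s,X_s)$ via backward small interval induction with a uniform step $h$ from Remark \ref{hchoice} (this is exactly the paper's argument, resting on Remark \ref{xyz}), and then identifying the solution through the local $\mathbb{G}$-uniqueness. The only difference is in the existence half, where your patching construction re-derives what Corollary \ref{regul} already provides: the paper simply cites strong regularity of $u$, which immediately yields a solution in $\mathbb{G}_t\subseteq\mathbb{G}_0$ for the constant initial value $X_t=x$.
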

\begin{proof}
The existence of $(X,Y,Z)$ follows directly from strong regularity of $u$ (Corollary \ref{regul}). In fact we even have $(X,Y,Z)\in \mathbb{G}_{t}$. Let us show uniqueness. Assume we have some $\mathbb{G}_0$ - solution $(X,Y,Z)$. Due to strong regularity, we only need to show the decoupling condition $Y_s=u(s,X_s)$. Choose an appropriate $t'\in[t,T]$ according to Remark \ref{hchoice}. Then we have $Y_s=u(s,X_s)$ for $s\in[t',T]$ according to Remark \ref{xyz}. In particular we have $Y_{t'}=u(t',X_{t'})$, which serves as a new terminal condition. We can now repeat the argument going to the left and conclude the proof using small interval induction (backward).
\end{proof}

Now we want to explore how large the interval $[t,T]$ can be chosen, such that we still have (weakly regular) decoupling fields on this interval. It is natural to work with the following definition.

\begin{defi}
We define the \emph{maximal interval} $I_{\mathrm{max}}\subseteq[0,T]$ for $\fbsde(\xi,(\mu,\sigma,f))$ as the union of all intervals $[t,T]\subseteq[0,T]$, such that there exists a weakly regular decoupling field $u$ on $[t,T]$.
\end{defi}

Unfortunately the maximal interval might very well be open to the left. Therefore we need to make our notions more precise in the following definitions.
\begin{defi}
Let $t<T$. We call a function $u:(t,T]\times\mathbb{R}^n\rightarrow\mathbb{R}^m$ a decoupling field for $\fbsde(\xi,(\mu,\sigma,f))$ on $(t,T]$ if $u$ restricted to $[t',T]$ is a decoupling field for all $t'\in(t,T]$.
\end{defi}

\begin{defi}
Let $t<T$. We call a decoupling field $u$ on $(t,T]$ \emph{weakly regular} if $u$ restricted to $[t',T]$ is weakly regular for all $t'\in(t,T]$.
\end{defi}

\begin{defi}
Let $t<T$. We call a decoupling field $u$ on $(t,T]$ \emph{strongly regular} if $u$ restricted to $[t',T]$ is strongly regular for all $t'\in(t,T]$.
\end{defi}


Now we can show the main result of this paper.

\begin{thm}[Global existence in weak form]\label{globalexist}
Let $\mu,\sigma,f,\xi$ be as in Theorem \ref{locexist}. Then there exists a unique weakly regular decoupling field $u$ on $I_{\mathrm{max}}$. This $u$ is even strongly regular. \\
Furthermore either $I_{\mathrm{max}}=[0,T]$ or $I_{\mathrm{max}}=(t_{\mathrm{min}},T]$ where $0\leq t_{\mathrm{min}}<T$.
\end{thm}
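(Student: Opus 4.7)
The plan is to set $t_{\min}:=\inf I_{\max}$, assemble a global decoupling field on $I_{\max}$ by consistent gluing using global uniqueness, deduce strong regularity from Corollary \ref{regul}, and finally rule out the case that $t_{\min}>0$ with $t_{\min}\in I_{\max}$ by extending one more step to the left via Theorem \ref{locexist} and Lemma \ref{glue}.

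First, Theorem \ref{locexist} applies (since $L_{\xi,x}<L_{\sigma,z}^{-1}$ and $\|\xi(\cdot,0)\|_\infty<\infty$), so $I_{\max}$ is nonempty and hence $t_{\min}\in[0,T)$. By definition, $I_{\max}$ is the union of right-closed intervals ending at $T$, so it is itself an interval with right endpoint $T$; thus $I_{\max}$ is either $[t_{\min},T]$ or $(t_{\min},T]$.

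Next, for each $s\in I_{\max}$ choose some $t\le s$ with $[t,T]\subseteq I_{\max}$ (possible by definition), pick the weakly regular decoupling field $u^{(t)}$ that exists on $[t,T]$, and set $u(s,\cdot):=u^{(t)}(s,\cdot)$. Corollary \ref{uniq} implies that any two such choices $u^{(t_1)},u^{(t_2)}$ coincide on $[t_1\vee t_2,T]$, so $u$ is well-defined up to modifications, and its restriction to every $[t,T]\subseteq I_{\max}$ is exactly the unique weakly regular decoupling field there. In particular $u$ is a weakly regular decoupling field on $I_{\max}$ in the sense of the half-open interval definitions above, and uniqueness among weakly regular decoupling fields on $I_{\max}$ is immediate from Corollary \ref{uniq}. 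Strong regularity then follows by applying Corollary \ref{regul} on each $[t,T]\subseteq I_{\max}$, which directly matches the definition of strong regularity on a possibly half-open interval.

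The only nonroutine step is the trichotomy at the left endpoint. Suppose for contradiction that $t_{\min}>0$ and $t_{\min}\in I_{\max}$. Then the restriction of $u$ to $[t_{\min},T]$ is weakly regular, so the $\mathcal{F}_{t_{\min}}$-measurable map $\eta:=u(t_{\min},\cdot)$ satisfies $L_{\eta,x}<L_{\sigma,z}^{-1}$ and $\|\eta(\cdot,0)\|_\infty<\infty$. Applying Theorem \ref{locexist} with $\eta$ as terminal condition on a sufficiently small interval $[t_{\min}-\varepsilon,t_{\min}]$ (with $\varepsilon>0$ chosen via Remark \ref{hchoice} and small enough that $t_{\min}-\varepsilon\ge 0$) produces a weakly regular decoupling field $\tilde u$ on that interval. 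Gluing via Lemma \ref{glue} yields $\hat u:=\tilde u\mathbf{1}_{[t_{\min}-\varepsilon,t_{\min})}+u\mathbf{1}_{[t_{\min},T]}$, a decoupling field on $[t_{\min}-\varepsilon,T]$. Its Lipschitz constant in $x$ equals the maximum of those of $\tilde u$ and $u$, which is still strictly below $L_{\sigma,z}^{-1}$, and $\|\hat u(\cdot,\cdot,0)\|_\infty$ is bounded by the maximum of the two corresponding bounds; hence $\hat u$ is weakly regular. This places $[t_{\min}-\varepsilon,T]\subseteq I_{\max}$, contradicting the definition of $t_{\min}$. Therefore, if $t_{\min}>0$ then $t_{\min}\notin I_{\max}$, i.e.\ $I_{\max}=(t_{\min},T]$; the remaining possibility $t_{\min}=0$ is compatible with either $I_{\max}=[0,T]$ or $I_{\max}=(0,T]$, which is precisely the claimed dichotomy.
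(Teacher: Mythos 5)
Your proof is correct and follows essentially the same route as the paper: assemble $u$ on $I_{\mathrm{max}}$ by gluing the unique weakly regular fields via Corollary \ref{uniq}, obtain strong regularity from Corollary \ref{regul}, and exclude a closed left endpoint $t_{\mathrm{min}}>0$ by extending one step to the left with Theorem \ref{locexist} and Lemma \ref{glue}. Your version merely spells out the extension-and-gluing contradiction (weak regularity of $\hat u$, choice of $\varepsilon$ via Remark \ref{hchoice}) a bit more explicitly than the paper does.
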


\begin{proof}
Let $t\in I_{\mathrm{max}}$. Obviously there exists a decoupling field $\check{u}^{(t)}$ on $[t,T]$ satisfying $L_{\check{u}^{(t)},x}<L_{\sigma,z}^{-1}$ and $\|\check{u}(\cdot,\cdot,0)\|_{\infty}<\infty$. There is only one such $\check{u}^{(t)}$ by Corollary \ref{uniq}. Furthermore, for $t,t'\in I_{\mathrm{max}}$ the functions $\check{u}^{(t)}$ and $\check{u}^{(t')}$ coincide on $[t\vee t',T]$ according to Corollary \ref{uniq}. \\
Define $u(t,\cdot):=\check{u}^{(t)}(t,\cdot)$ for all $t\in I_{\mathrm{max}}$. This function $u$ is a decoupling field on $[t,T]$ since it coincides with $\check{u}^{(t)}$ on $[t,T]$. Therefore $u$ is a decoupling field on the whole interval $I_{\mathrm{max}}$ and satisfies $L_{u|_{[t,T]},x}<L_{\sigma,z}^{-1}$ for all $t\in I_{\mathrm{max}}$. \\
Uniqueness of $u$ follows directly from Corollary \ref{uniq} applied to every interval $[t,T]\subseteq I_{\mathrm{max}}$. \\
Furthermore, $u$ is strongly regular on $[t,T]$ for all $t\in I_{\mathrm{max}}$ because of Corollary \ref{regul}. \\
To prove the claim on the form of $I_{\mathrm{max}}$, note that $I_{\mathrm{max}}=[t,T]$ with $t\in(0,T]$ is not possible. Assume otherwise. According to Theorem \ref{globalexist} there is a decoupling field $u$ on $[t,T]$ s.t. $L_{u,x}<L_{\sigma,z}^{-1}$ and $\|u(\cdot,\cdot,0)\|_{\infty}<\infty$. However, then we can extend $u$ a little bit to the left using Theorem \ref{locexist} and Lemma \ref{glue}.
\end{proof}

By \emph{global existence in strong form} we mean the above weak global existence together with $I_{\mathrm{max}}=[0,T]$. Unfortunately the "bad" case $I_{\mathrm{max}}=(t_{\mathrm{min}},T]$ is possible and is even more common. The following result basically says that this case can only occur if there is an "explosion" in the spatial derivative of $u$ as we approach the lower boundary $t_{\mathrm{min}}$. By "explosion" we mean reaching the "forbidden" value $L_{\sigma,z}^{-1}$ which is just $\infty$ in many applications.

\begin{lem}\label{explosion}
Let $\mu,\sigma,f,\xi$ be as in Theorem \ref{locexist}. If $I_{\mathrm{max}}=(t_{\mathrm{min}},T]$, then
$$\lim_{t\downarrow t_{\mathrm{min}}}L_{u(t,\cdot),x}=L_{\sigma,z}^{-1},$$
where $u$ is the unique decoupling field on $I_{\mathrm{max}}$.
\end{lem}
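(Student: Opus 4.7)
The plan is to argue by contradiction. Since $u(t,\cdot)$ is weakly regular on each $[t',T]\subseteq I_{\mathrm{max}}$, we have $L_{u(t,\cdot),x}<L_{\sigma,z}^{-1}$ for every $t\in(t_{\mathrm{min}},T]$, so in particular $\limsup_{t\downarrow t_{\mathrm{min}}}L_{u(t,\cdot),x}\leq L_{\sigma,z}^{-1}$. It therefore suffices to rule out $\liminf_{t\downarrow t_{\mathrm{min}}}L_{u(t,\cdot),x}<L_{\sigma,z}^{-1}$. Assume the latter. Then there exist $\ell\in[0,L_{\sigma,z}^{-1})$ and a sequence $t_n\downarrow t_{\mathrm{min}}$ with $L_{u(t_n,\cdot),x}\leq \ell$ for all $n$.

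The idea is to use this uniform Lipschitz bound to extend $u$ strictly past $t_{\mathrm{min}}$, contradicting the definition of $I_{\mathrm{max}}$. Fix $n$. The terminal condition $\xi_n:=u(t_n,\cdot)$ is measurable, Lipschitz in $x$ with constant at most $\ell$, and $\|\xi_n(\cdot,0)\|_\infty<\infty$ (this last bound follows from the weak regularity of $u$ on $[t_n,T]$ but is not needed to be uniform in $n$). Since $\ell\cdot L_{\sigma,z}<1$, Theorem \ref{locexist} produces a weakly regular decoupling field $\tilde u_n$ on some interval $[(t_n-h_n)\vee 0,\,t_n]$ for $\fbsde(\xi_n,(\mu,\sigma,f))$. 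The crucial point is Remark \ref{hchoice}: the admissible step size $h_n$ depends only on the Lipschitz constants of $\mu,\sigma,f$ in the last three components and on $L_{\xi_n,x}$ and $L_{\xi_n,x}\cdot L_{\sigma,z}$, monotonically in the latter two. Since these are bounded uniformly in $n$ by $\ell$ and $\ell\cdot L_{\sigma,z}<1$, we obtain a uniform $h>0$ with $h_n\geq h$ for all $n$.

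Now pick $n$ large enough that $t_n<t_{\mathrm{min}}+h$. Then $(t_n-h)\vee 0<t_{\mathrm{min}}$. Applying Lemma \ref{glue} to $\tilde u_n$ on $[(t_n-h)\vee 0,\,t_n]$ and $u$ on $[t_n,T]$ yields a decoupling field $\hat u$ for $\fbsde(\xi,(\mu,\sigma,f))$ on $[(t_n-h)\vee 0,T]$. Both pieces have spatial Lipschitz constant strictly below $L_{\sigma,z}^{-1}$ and satisfy $\|\cdot(\cdot,0)\|_\infty<\infty$; taking maxima over the two pieces, $\hat u$ inherits both properties and is therefore weakly regular. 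This contradicts $I_{\mathrm{max}}=(t_{\mathrm{min}},T]$, since $\hat u$ exhibits a weakly regular decoupling field on an interval whose left endpoint lies strictly below $t_{\mathrm{min}}$.

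The only genuinely delicate point is the uniformity of $h$ in $n$, which is exactly the content of Remark \ref{hchoice}. Everything else (the pointwise upper bound $L_{u(t,\cdot),x}<L_{\sigma,z}^{-1}$, the extraction of a sequence realizing the liminf, verification that $\hat u$ remains weakly regular after gluing, and the ensuing contradiction with the maximality of $I_{\mathrm{max}}$) is routine bookkeeping once the uniform step size is in place.
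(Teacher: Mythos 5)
Your proof is correct and follows essentially the same route as the paper: argue by contradiction, extract a sequence $t_n\downarrow t_{\mathrm{min}}$ with spatial Lipschitz constants uniformly bounded away from $L_{\sigma,z}^{-1}$, invoke Theorem \ref{locexist} together with the uniform step size from Remark \ref{hchoice}, and glue via Lemma \ref{glue} to extend $u$ past $t_{\mathrm{min}}$, contradicting the maximality of $I_{\mathrm{max}}$. Your explicit treatment of the $\limsup$/$\liminf$ reduction and the weak regularity of the glued field just spells out steps the paper leaves implicit.
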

\begin{proof}
This can be shown by contradiction. Assume in fact that we can select times $t_n\downarrow t_{\mathrm{min}}$ as $n\rightarrow\infty$ such that $$\sup_{n\in\mathbb{N}}L_{u(t_n,\cdot),x}<L_{\sigma,z}^{-1}.$$
Then we can choose $h>0$ according to Remark \ref{hchoice}, which does not depend on $n$ and then choose $n$ large enough to have $t_n-t_{\mathrm{min}}<h$. Hence, $u$ can be extended to the left (using Lemma \ref{glue}) to a larger interval $[(t_n-h)\vee 0,T]$ contradicting the definition of $I_{\mathrm{max}}$.
\end{proof}

Lemma \ref{explosion} serves as a blueprint to show strong global existence in those cases in which it is suspected to hold. Let us describe the different steps.

\begin{enumerate}
\item Assume indirectly that $I_{\mathrm{max}}=[0,T]$ does not hold, which implies $I_{\mathrm{max}}=(t_{\mathrm{min}},T]$. Choose arbitrary $t\in (t_{\mathrm{min}},T]$, $x\in\mathbb{R}^n$ and consider the corresponding FBSDE.
\item Differentiate the FBSDE w.r.t. $x$. This is possible because of strong regularity of $u$ (Theorem \ref{globalexist}). We obtain joint dynamics of $\frac{\dx }{\dx x}X,\frac{\dx }{\dx x}Y,\frac{\dx }{\dx x}Z$.
\item Using It\^o's formula deduce the dynamics of $\frac{\dx }{\dx x}Y_s(\frac{\dx }{\dx x}X_s)^{-1}$. Note that this process is equal to $u_x(s,X_s)$, as a consequence of the decoupling condition $Y_s=u(s,X_s)$.
\item Using the dynamics of $u_x(s,X_s)$ show that its modulus can be bounded away from $L_{\sigma,z}^{-1}$ independently of $t,x,s,\omega$. This contradicts Lemma \ref{explosion} and therefore $I_{\mathrm{max}}=[0,T]$ must hold.
\end{enumerate}

This blueprint can be referred to as the \emph{method of decoupling fields} to show global existence of solutions to FBSDEs (note Corollary \ref{uniqXYZ} at this point). We have succesfully applied this method to a number of strongly coupled FBSDEs ranging from problems appearing in utility maximization to Skorohod embedding. This will be discussed in detail in forthcoming publications.

\subsection{The Markovian case}
A problem given by $\mu,\sigma,f,\xi$ is said to be \emph{Markovian}, if these four functions are deterministic, i.e. depend on $t,x,y,z$ only. We will call these functions deterministic and the case Markovian also if the dependence on $\omega$ is trivial, i.e. the values for $(\omega,t,x,y,z)$ and $(\omega',t,x,y,z)$ are the same for all $\omega,\omega'\in\tilde{\Omega}$ and all $t,x,y,z$ where $\tilde{\Omega}\subseteq\Omega$ is measurable and has probability $1$.\\
In the Markovian case we can somewhat relax the Lipschitz continuity assumption and still obtain local existence together with uniqueness. What makes the Markovian case so special is the property
$$"Z_s=u_x(s,X_s)\cdot\sigma(s,X_s,Y_s,Z_s)"$$
which comes from the fact that $u$ will also be deterministic. This property allows us to bound $Z$ by a constant if we assume that $\sigma$ is bounded. This is a common trick (e.g. \cite{richou}, \cite{richou2}). \\
This relationship can be seen as a consequence of the It\^o formula, applied to $u(s,X_s)$, assuming that $u$ is smooth enough. However, under our assumptions $u$ will not have sufficient smoothness.\\
In the literature sometimes Malliavin's calculus is used to deduce such a relationship (e.g. \cite{dreis}). However we will follow simpler arguments. Note that under our assumptions $u$ is only weakly differentiable in $x$, such that $u_x(s,\cdot)$ is only unique up to null sets. At the same time the distribution of $X_s$ does not have to be absolutely continuous w.r.t. the Lebesgue measure on $\mathbb{R}^n$. Therefore, $u_x(s,X_s)$ is not properly defined and so we will not actually try to show $Z_s=u_x(s,X_s)\cdot\sigma(s,X_s,Y_s,Z_s)$. We are only interested in bounding $Z$. \\
Finally let us remark that in the Markovian case the FBSDEs we consider are closely connected with a major class of quasilinear parabolic partial differential equations (via Feynman-Kac). A decoupling field $u$ can be seen as a type of solution to such a PDE. Notice that we are able to develop an existence and uniqueness theory under very mild assumptions. Basically we only need Lipschitz or even just local Lipschitz continuity (as we will see later) of the parameters involved. Under our assumptions $u$ will only be Lipschitz continuous in space and continuous as a function of time and space, but in order to write down the classical PDE $u$ has to be differentiable in time (at least weakly) and twice differentiable in space, which requires much more restrictive assumptions. Thus we have a very weak form of solvability allowing us to have existence and uniqueness for a very general class of problems. \\
Also note that with this stochastic interpretation of second order PDEs we are able to implement a rather explicit construction. So far we have either used Picard iterations and Banach's fixed point theorem or a technique of "gluing together" decoupling fields on adjacent intervals. This gives us more control over the objects constructed and can potentially serve as basis for numerical methods. \\

First let us prove the following statement.

\begin{lem}\label{deter}
 Let $\mu,\sigma,f,\xi$ be deterministic. Assume we have a unique strongly regular decoupling field $u$ on an interval $[t,T]$. Then $u$ is also deterministic.
\end{lem}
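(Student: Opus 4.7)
The plan is to use backward small interval induction (Lemma \ref{sii}) on the set $S \subseteq [t,T]$ of times $s$ for which $u(s,\cdot,x)$ is a.s.\ constant (as a function of $\omega$) for every $x \in \mathbb{R}^n$. The base case $T \in S$ is immediate since $u(T,\cdot,x)=\xi(x)$ and $\xi$ is deterministic. Because $L_{u(s,\cdot),x}\leq L_{u,x}<L_{\sigma,z}^{-1}$ uniformly in $s$, Remark \ref{hchoice} supplies a single $h>0$ such that Theorem \ref{locexist} applies on any interval of the form $[(s-h)\vee t,s]$ with terminal condition $u(s,\cdot)$.

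For the inductive step I would fix $s \in S$, pick $t_1 \in [(s-h)\vee t,s]$ and $x \in \mathbb{R}^n$. By strong regularity of $u$ there exists a unique $(X,Y,Z)\in\mathbb{G}_{t_1}$ on $[t_1,s]$ with $X_{t_1}=x$, satisfying $Y_r = u(r,X_r)$ a.s.\ on $[t_1,s]$ and in particular $u(t_1,\cdot,x)=Y_{t_1}$ a.s. I would identify $(X,Y,Z)$ with the Picard limit from the proof of Theorem \ref{locexist}, started from $(X^0,Y^0,Z^0)=(0,0,0)$. Since $\mu,\sigma,f$ are deterministic, the initial value $x$ is constant, and the terminal function $u(s,\cdot)$ is deterministic by the inductive hypothesis, each iterate $(X^k,Y^k,Z^k)$ should be adapted to the sub-filtration $\mathcal{G}_r:=\sigma(W_u-W_{t_1},\,u\in[t_1,r])\vee\mathcal{N}$. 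The forward step preserves $\mathcal{G}$-adaptedness directly; for the BSDE step one observes that the BSDE admits a unique $\mathcal{G}$-adapted solution, obtained via martingale representation with respect to the Brownian motion $W-W_{t_1}$, which by uniqueness of BSDE solutions in the larger filtration $(\mathcal{F}_r)$ must coincide with the one produced in the iteration of Theorem \ref{locexist}.

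Passing to the $\mathbb{G}_{t_1}$-limit shows that $(X,Y,Z)$ is itself $\mathcal{G}$-adapted, so $Y_{t_1}$ is $\mathcal{G}_{t_1}$-measurable. Since $\mathcal{G}_{t_1}$ contains only null sets, $Y_{t_1}$ is a.s.\ constant, and hence $t_1 \in S$. This gives $[(s-h)\vee t,s]\subseteq S$, and Lemma \ref{sii} yields $S=[t,T]$, proving that $u$ is deterministic.

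The main technical obstacle is the BSDE step of the Picard iteration, where $\mathcal{G}$-adaptedness must be transferred from the data to the solution. This is where the determinism of $\mu,\sigma,f,\xi$ is genuinely used, via martingale representation in the smaller filtration combined with BSDE uniqueness in the larger one. The remainder of the argument is routine: it combines the uniform choice of step size from Remark \ref{hchoice} with the small interval induction already developed in this section.
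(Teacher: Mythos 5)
Your argument is correct, but it is genuinely different from the paper's. You prove determinism by backward small interval induction (Lemma \ref{sii}): assuming $u(s,\cdot)$ is deterministic, you rerun the Picard scheme of Theorem \ref{locexist} on $[(s-h)\vee t,s]$ with constant initial value $x$ and deterministic data, show each iterate is adapted to the augmented filtration $\mathcal{G}$ generated by the increments $W_\cdot-W_{t_1}$ (the BSDE step via martingale representation in $\mathcal{G}$ plus uniqueness of the It\^o representation in the larger filtration), pass to the limit, identify the limit with the processes furnished by strong regularity through uniqueness of the fixed point in $\mathbb{G}_{t_1}$, and conclude $u(t_1,\cdot,x)=Y_{t_1}$ is $\mathcal{G}_{t_1}$-measurable, hence a.s.\ constant. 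The paper instead argues globally in one stroke: it decomposes $\Omega=\Omega_{[0,t]}\times\Omega_{(t,T]}$, freezes the past coordinate $\omega_1$, observes that $\tilde u:=u((\omega_1,\cdot),\cdot,\cdot)$ is again a strongly regular decoupling field because $\mu,\sigma,f,\xi$ are deterministic and the increments after $t$ do not depend on $\omega_1$, invokes the assumed uniqueness to get $\tilde u=u$ a.e., and then notes that $u(t,x)=Y^x_t$ is $\mathcal{F}_t$-measurable while $\tilde u(t,x)$ depends only on the future increments, so it is a.s.\ constant. What the two routes buy: the paper's proof is a short exploitation of the uniqueness hypothesis and needs no induction; yours is longer but essentially bypasses uniqueness of the decoupling field, using only strong regularity and the local contraction, and is more constructive in spirit. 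Two small points you should make explicit if you write it out: from ``$u(s,\cdot,x)$ a.s.\ constant for each $x$'' you need a deterministic version of $u(s,\cdot)$ that is jointly measurable and Lipschitz in $x$ to serve as terminal condition (use the uniform Lipschitz bound and a countable dense set of $x$, plus the fact that decoupling fields are stable under modification), and the identification of the Picard limit with the strong-regularity triple should be phrased as: both are fixed points of the contraction $F$ lying in $\mathbb{G}_{t_1}$, hence coincide. Neither is a gap, just bookkeeping.
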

\begin{proof}
We can decompose $\Omega=\Omega_{[0,t]}\times\Omega_{(t,T]}$, where the first component contains only the information about $\mathcal{F}_0$ and the noise on $[0,t]$, while the second component only contains the information about the noise on $[t,T]$, i.e. is generated by $(W_{t+h}-W_t)_{h\in [0,T-t]}$. We fix a suitable $\omega_1\in \Omega_{[0,t]}$ and consider $\tilde{u}:=u((\omega_1,\cdot),\cdot,\cdot)$. It is straightforward to see that this function will also be a strongly regular decoupling field since $\mu,\sigma,f,\xi$ are deterministic and $(W_{t+h}-W_t)_{h\in [0,T-t]}$ does not depend on $\omega_1$. Due to uniqueness $\tilde{u}$ must coincide a.e. with $u$. Since $\tilde{u}(t,x)=Y^{x}_t$ is $\mathcal{F}_t$-measurable, it is independent of $(W_{t+h}-W_t)_{h\in [0,T-t]}$. Thus $\tilde{u}(t,x)$ and therefore $u(t,x)$ can be assumed deterministic.
\end{proof}

As an application we show the following very fundamental result.

\begin{lem}\label{Zcontrol}
Let $\mu,\sigma,f,\xi$ be as in Theorem \ref{locexist} and assume in addition that they are deterministic. Let $u$ be a strongly regular decoupling field on an interval $[t,T]$. Choose $t_1<t_2$ from $[t,T]$ and an initial condition $X_{t_1}$. Then the corresponding $Z$ satisfies $\|Z\|_\infty\leq L_{u,x}\cdot\|\sigma\|_\infty$.
\end{lem}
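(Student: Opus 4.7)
The heuristic is that in the Markovian case $Z_s = u_x(s,X_s)\,\sigma(s,X_s,Y_s,Z_s)$, whence $|Z_s|\leq L_{u,x}\|\sigma\|_\infty$. My plan is to justify this representation rigorously despite $u$ being only Lipschitz (not $C^{1,2}$) in space.

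First, the reductions. By Lemma \ref{deter}, $u$ is deterministic. Imitating the $\Omega$-splitting from the proof of Theorem \ref{locexist}, write $\Omega=\Omega_{[0,t_1]}\times\Omega_{(t_1,t_2]}$ and fix a generic $\omega_1\in\Omega_{[0,t_1]}$; since $\mu,\sigma,f,\xi,u$ do not depend on $\omega_1$ and $X_{t_1}(\omega_1)$ is constant in $\omega_2$, it suffices to establish the bound for $Z^x$ uniformly in the deterministic initial value $X_{t_1}=x\in\mathbb{R}^n$. Furthermore, by restricting to the FBSDE on $[t_1,t_2]$ with new terminal $u(t_2,\cdot)$, which is weakly regular since $L_{u(t_2,\cdot),x}\leq L_{u,x}<L_{\sigma,z}^{-1}$, I may take $t_2=T$ throughout.

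The key step is the pointwise identification $Z^x_s = u_x(s,X^x_s)\,\sigma(s,X^x_s,Y^x_s,Z^x_s)$ a.e.\ in $(x,s,\omega)$. I would proceed by smoothing: let $u^\delta(s,x):=(u(s,\cdot)\ast\rho_\delta)(x)$ be a spatial mollification, smooth in $x$ with $|u^\delta_x|\leq L_{u,x}$ and $u^\delta\to u$ locally uniformly. Applying It\^o's formula to $u^\delta(s,X^x_s)$ (with the time-dependence handled by an additional time-mollification, using continuity of $u$ in $s$ available in the Markovian case) yields a semimartingale decomposition with martingale integrand $u^\delta_x(s,X^x_s)\,\sigma(s,X^x_s,Y^x_s,Z^x_s)$. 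Passing $\delta\downarrow 0$, the convergence $u^\delta(s,X^x_s)\to u(s,X^x_s)=Y^x_s$ in $L^2$ combined with the uniqueness of the martingale integrand in the semimartingale decomposition of $Y^x$ identifies $Z^x_s$ as the corresponding limit, which is bounded in modulus by $L_{u,x}\|\sigma\|_\infty$.

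The main technical obstacle is the treatment of the time-dependence of $u^\delta$ when invoking It\^o, since time regularity of $u$ is not assumed a priori. A cleaner alternative that avoids It\^o is to use variational differentiability directly: by strong regularity and Lemmas \ref{wd2}--\ref{wd5}, together with the chain rule (Lemma \ref{chainrule}) applied to the decoupling identity $Y^x_s=u(s,X^x_s)$, one has $(v\cdot\nabla_x)Y^x_s=\Delta u(s,X^x_s;v)\,(v\cdot\nabla_x)X^x_s$ with $|\Delta u|_v\leq L_{u,x}$. Matching martingale integrands in the linear BSDE satisfied by $(v\cdot\nabla_x)Y^x$ against the It\^o decomposition of the right-hand side then identifies $(v\cdot\nabla_x)Z^x$ in terms of $\Delta u$ composed with the diffusion coefficient of $(v\cdot\nabla_x)X^x$; specialising to $s=t_1$ with $V^v_{t_1}=v$ ranging over $S^{n-1}$ the diffusion reduces to an expression in $\sigma(t_1,x,u(t_1,x),Z^x_{t_1})\cdot v$, and one reads off the uniform bound $|Z^x_{t_1}|\leq L_{u,x}\|\sigma\|_\infty$.
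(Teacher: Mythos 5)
Both of your routes stop short at exactly the point the paper itself flags as the real difficulty, and neither sketch closes it. In the mollification route, the passage $\delta\downarrow 0$ is not justified: locally uniform convergence $u^\delta\to u$ gives $u^\delta(s,X_s)\to Y_s$ in $L^2$, but convergence of semimartingales in this sense does not identify the limit of their martingale integrands unless the finite-variation parts are controlled, and here the drift of $u^\delta(\cdot,X_\cdot)$ contains $\partial_s u^\delta$ and $\tfrac12\mathrm{tr}\left(u^\delta_{xx}\,\sigma\sigma^\top\right)$, neither of which is uniformly bounded as $\delta\downarrow 0$ when $u$ is merely Lipschitz in $x$ and continuous in $t$ (time-mollifying only hides this in the $\partial_s$ term). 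Moreover, the paper points out that $u_x(s,X_s)$ is not even well defined, since $u_x(s,\cdot)$ is unique only up to Lebesgue-null sets while the law of $X_s$ may be singular; any limit identification has to circumvent this, and your sketch does not. In the variational route the gap is more structural: the chain rule gives, for each fixed $s$, $\frac{\dx}{\dx x}Y_s=\Delta^{X_s}_{x}u\,\frac{\dx}{\dx x}X_s$ with $|\Delta^{X_s}_{x}u|_v\le L_{u,x}$, but $\Delta^{X_s}_{x}u$ is defined $s$-wise by a limsup of difference quotients and carries no semimartingale structure in $s$, so there is no ``It\^o decomposition of the right-hand side'' to match against. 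Even granting one, matching martingale integrands in the variational BSDE identifies $\frac{\dx}{\dx x}Z_s$, not $Z_s$; and at $s=t_1$ the diffusion coefficient of $\frac{\dx}{\dx x}X$ is $\Delta_x\sigma+\Delta_y\sigma\,\frac{\dx}{\dx x}Y+\Delta_z\sigma\,\frac{\dx}{\dx x}Z$, not $\sigma$ itself, so the bound $|Z^x_{t_1}|\le L_{u,x}\|\sigma\|_\infty$ cannot be ``read off'': you would be reproving the heuristic identity you started from. A bound on $Z$ itself simply does not follow from bounds on the $x$-derivatives.

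The paper avoids identifying $Z$ with $u_x\sigma$ altogether. For a constant initial value it fixes a Lebesgue point $s'$ of $r\mapsto Z_r$ (and of $f(r,X_r,Y_r,Z_r)$ and $W_rf(r,X_r,Y_r,Z_r)$), applies the product rule to $Y_\cdot(W_\cdot-W_{s'})$ up to a localizing stopping time $\tau>s'$, and shows
$$\lim_{s\downarrow s'}\mathbb{E}\left[\frac{1}{s-s'}\,Y_{\tau\wedge s}\left(W_{\tau\wedge s}-W_{s'}\right)\Big|\mathcal{F}_{s'}\right]=Z_{s'}\qquad\textrm{a.s.}$$
It then bounds the same conditional expectation directly, using $Y_s=u(s,X_s)$, the Lipschitz bound $|u(s,X_s)-u(s,X_{s'})|\le L_{u,x}|X_s-X_{s'}|$, Cauchy--Schwarz, It\^o's isometry and $\|\sigma\|_\infty$, obtaining $|Z_{s'}|\le L_{u,x}\|\sigma\|_\infty$ in the limit; the drift contributions vanish at rate $\sqrt{s-s'}$. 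Random initial conditions are then handled exactly by the $\Omega=\Omega_{[0,t_1]}\times\Omega_{(t_1,T]}$ splitting you set up, which is the one part of your reduction that matches the paper. If you want to salvage your proposal, you need an argument of this covariation type (or a genuinely quantitative replacement for the It\^o step); as written, both sketches leave the decisive identification unproved.
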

\begin{proof}\textsc{Case 1:} Assume $X_{t_1}=x\in\mathbb{R}^n$.\\ $u$ is unique according to Corollary \ref{uniq}. It is also deterministic according to Lemma \ref{deter}.\\
Notice $\lim_{s\downarrow s'}\frac{1}{s-s'}\int_{s'}^s Z_r(\omega)\dx r=Z_{s'}(\omega)$ for almost all $(\omega,s')\in\Omega\times[t,T]$ due to the fundamental Theorem of Lebesgue integral calculus. The same holds for the expressions $\frac{1}{s-s'}\int_{s'}^s W_r f(r,X_r,Y_r,Z_r)\dx r$ and $\frac{1}{s-s'}\int_{s'}^s f(r,X_r,Y_r,Z_r)\dx r$.\\
Also $\mathbb{E}[|Z_{s'}|^2]<\infty$ for almost all $s'\in[t,T]$ due to $\mathbb{E}\left[\int_{t}^T |Z_r|^2\dx r\right]<\infty$. The same holds for the expression $\mathbb{E}[|f(s',X_{s'},Y_{s'},Z_{s'})|^2]$. \\
Choose an $s'\in [t,T]$ with
\begin{itemize}
\item $\lim_{s\downarrow s'}\frac{1}{s-s'}\int_{s'}^s Z_r\dx r=Z_{s'}$ a.s.,
\item $\lim_{s\downarrow s'}\frac{1}{s-s'}\int_{s'}^s W_r f(r,X_r,Y_r,Z_r)\dx r$ a.s.,
\item $\lim_{s\downarrow s'}\frac{1}{s-s'}\int_{s'}^s f(r,X_r,Y_r,Z_r)\dx r$ a.s.,
\item $\mathbb{E}[|Z_{s'}|^2],\mathbb{E}[|f(s',X_{s'},Y_{s'},Z_{s'})|^2]<\infty$.
\end{itemize}
For every $(\mathcal{F}_s)_{s\in [s',T]}$ - stopping time $\tau:\Omega\rightarrow (s',T]$ we have according to the product rule
\begin{multline*}
 Y_\tau(W_\tau-W_{s'})=\\
=\int_{s'}^\tau Y_r\dx W_r+\int_{s'}^\tau (W_r-W_{s'})f(r,X_r,Y_r,Z_r)\dx r+\int_{s'}^\tau Z_r\dx W_r  (W_r-W_{s'})+\int_{s'}^\tau Z_r\dx r.
\end{multline*}
$\tau$ can be chosen in such a way that
\begin{itemize}
\item $\left(\int_{s'}^{\tau\wedge s} Y_r\dx W_r\right)_{s\in[s',T]}$ and $\left(\int_{s'}^{\tau\wedge s} Z_r \dx W_r (W_r-W_{s'})\right)_{s\in[s',T]}$ are uniformly integrable martingales,
\item $\left|\frac{1}{s-s'}\int_{s'}^{\tau\wedge s} Z_r\dx r\right|\leq |Z_{s'}|+1$ a.s.,
\item $\left|\frac{1}{s-s'}\int_{s'}^{\tau\wedge s} f(r,X_r,Y_r,Z_r)\dx r\right|\leq |f(s',X_{s'},Y_{s'},Z_{s'})|+1$ a.s.,
\item $\left|\frac{1}{s-s'}\int_{s'}^{\tau\wedge s} W_r f(r,X_r,Y_r,Z_r)\dx r\right|\leq |W_{s'}f(s',X_{s'},Y_{s'},Z_{s'})|+1$ a.s., for all $s\in[s',T]$.
\end{itemize}
We only discuss the second statement: Define $U_s:=\frac{1}{s-s'}\int_{s'}^{s} Z_r\dx r$ for $s\in(s',T]$ and set $U_{s'}:=Z_{s'}$. $U$ is a continuous and adapted process starting at $Z_{s'}$. If we choose $\tau$ such that the stopping occurs when $|U|$ reaches $|Z_{s'}|+1$, then $\tau>s'$ will hold and also $\left|\frac{1}{s-s'}\int_{s'}^{\tau\wedge s} Z_r\dx r\right|=\left|\frac{\tau\wedge s-s'}{s-s'}U_{\tau\wedge s}\right|\leq \left|U_{\tau\wedge s}\right|\leq |Z_{s'}|+1$, for $s\in(s',T]$. If we stop earlier, i.e. choose a smaller stopping time, the bound still holds as long as $\tau>s'$.

Now we take conditional expectations and pass to the limit applying dominated convergence along the way. This leads to
\begin{multline*}
 \lim_{s\downarrow s'}\mathbb{E}\left[\frac{1}{s-s'}Y_{\tau\wedge s}(W_{\tau\wedge s}-W_{s'})\bigg|\mathcal{F}_{s'}\right]=\\
=\lim_{s\downarrow s'}\mathbb{E}\left[\frac{1}{s-s'}\int_{s'}^{\tau\wedge s} (W_r-W_{s'})f(r,X_r,Y_r,Z_r)\dx r\bigg|\mathcal{F}_{s'} \right]+
\lim_{s\downarrow s'}\mathbb{E}\left[\frac{1}{s-s'}\int_{s'}^{\tau\wedge s} Z_r\dx r\bigg|\mathcal{F}_{s'} \right]=\\
=\mathbb{E}\left[\lim_{s\downarrow s'}\frac{1}{s-s'}\int_{s'}^{\tau\wedge s} W_rf(r,X_r,Y_r,Z_r)\dx r\bigg|\mathcal{F}_{s'} \right]- \\
-\mathbb{E}\left[\lim_{s\downarrow s'}\frac{1}{s-s'}\int_{s'}^{\tau\wedge s} W_{s'}f(r,X_r,Y_r,Z_r)\dx r\bigg|\mathcal{F}_{s'} \right]
+\mathbb{E}\left[\lim_{s\downarrow s'}\frac{1}{s-s'}\int_{s'}^{\tau\wedge s} Z_r\dx r\bigg|\mathcal{F}_{s'} \right]=\\
=W_{s'}f({s'},X_{s'},Y_{s'},Z_{s'})-W_{s'}f({s'},X_{s'},Y_{s'},Z_{s'})+Z_{s'}=Z_{s'},
\end{multline*}
where we used $\tau(\omega)\wedge s=s$ for $s\in(s',T]$ small enough. \\
This of course implies
$$  \lim_{s\downarrow s'}\left|\mathbb{E}\left[\frac{1}{s-s'}Y_{\tau\wedge s}(W_{\tau\wedge s}-W_{s'})\bigg|\mathcal{F}_{s'}\right]\right|=|Z_{s'}|. $$
At the same time
\begin{multline*}
 \left|\mathbb{E}\left[\frac{1}{s-s'}Y_{\tau\wedge s}(W_{\tau\wedge s}-W_{s'})\bigg|\mathcal{F}_{s'}\right]\right|\leq \\
\leq\left|\mathbb{E}\left[\frac{1}{s-s'}Y_{s}(W_{\tau\wedge s}-W_{s'})\bigg|\mathcal{F}_{s'}\right]\right|+\left|\mathbb{E}\left[\frac{1}{s-s'}\left(Y_{s}-Y_{\tau\wedge s}\right)(W_{\tau\wedge s}-W_{s'})\bigg|\mathcal{F}_{s'}\right]\right|=\\
=\left|\mathbb{E}\left[\frac{1}{s-s'}u(s,X_s)(W_{\tau\wedge s}-W_{s'})\bigg|\mathcal{F}_{s'}\right]\right|+\left|\mathbb{E}\left[\frac{1}{s-s'}\mathbb{E}\left[Y_{s}-Y_{\tau\wedge s}|\mathcal{F}_{\tau\wedge s}\right](W_{\tau\wedge s}-W_{s'})\bigg|\mathcal{F}_{s'}\right]\right|=
\end{multline*}
\begin{multline*}
=\left|\mathbb{E}\left[\frac{1}{s-s'}\left(u(s,X_s)-u(s,X_{s'})\right)(W_{\tau\wedge s}-W_{s'})\bigg|\mathcal{F}_{s'}\right]\right|+\\
\shoveright{+\left|\mathbb{E}\left[\frac{1}{s-s'}\mathbb{E}\left[\int_{\tau\wedge s}^s f(r,X_r,Y_r,Z_r)\dx r \bigg|\mathcal{F}_{\tau\wedge s}\right](W_{\tau\wedge s}-W_{s'})\bigg|\mathcal{F}_{s'}\right]\right|\leq} \\
\shoveleft{\leq\frac{1}{s-s'}\left(\mathbb{E}\left[\left|u(s,X_s)-u(s,X_{s'})\right|^2\big|\mathcal{F}_{s'}\right]\right)^{\frac{1}{2}}\left(\mathbb{E}\left[(W_{\tau\wedge s}-W_{s'})^2\big|\mathcal{F}_{s'}\right]\right)^{\frac{1}{2}}+}\\
+\frac{1}{s-s'}\left(\mathbb{E}\left[\left|\int_{\tau\wedge s}^s f(r,X_r,Y_r,Z_r)\dx r\right|^2 \bigg|\mathcal{F}_{s'}\right]\right)^{\frac{1}{2}}\left(\mathbb{E}\left[(W_{\tau\wedge s}-W_{s'})^2\big|\mathcal{F}_{s'}\right]\right)^{\frac{1}{2}}\leq
\end{multline*}
\begin{multline*}
\leq\frac{1}{s-s'}L_{u,x}\left(\mathbb{E}\left[\left|X_s-X_{s'}\right|^2\big|\mathcal{F}_{s'}\right]\right)^{\frac{1}{2}}\sqrt{s-s'}+\\
+\frac{1}{s-s'}\left(\mathbb{E}\left[(s-s')\int_{s'}^s |f(r,X_r,Y_r,Z_r)|^2\dx r \bigg|\mathcal{F}_{s'}\right]\right)^{\frac{1}{2}}\sqrt{s-s'}\leq
\end{multline*}
\begin{multline*}
\leq\frac{1}{\sqrt{s-s'}}L_{u,x}\left(\left(\mathbb{E}\left[\left|\int_{s'}^s \mu(r,X_r,Y_r,Z_r)\dx r\right|^2\bigg|\mathcal{F}_{s'}\right]\right)^{\frac{1}{2}}+
\left(\mathbb{E}\left[\int_{s'}^s |\sigma(r,X_r,Y_r,Z_r)|^2\dx r\bigg|\mathcal{F}_{s'}\right]\right)^{\frac{1}{2}}\right)+\\
\shoveright{+\left(\mathbb{E}\left[\int_{s'}^s |f(r,X_r,Y_r,Z_r)|^2\dx r \bigg|\mathcal{F}_{s'}\right]\right)^{\frac{1}{2}}\leq}\\
\leq L_{u,x}\left(\mathbb{E}\left[\int_{s'}^s |\mu(r,X_r,Y_r,Z_r)|^2\dx r\bigg|\mathcal{F}_{s'}\right]\right)^{\frac{1}{2}}+
L_{u,x}\|\sigma\|_{\infty}+
\left(\mathbb{E}\left[\int_{s'}^s |f(r,X_r,Y_r,Z_r)|^2\dx r \bigg|\mathcal{F}_{s'}\right]\right)^{\frac{1}{2}},
\end{multline*}
which a.s. converges to $L_{u,x}\|\sigma\|_{\infty}$ as $s\rightarrow s'$. \\
We have therefore demonstrated $|Z_{s'}|\leq L_{u,x}\|\sigma\|_{\infty}$ a.s.\\
Note that this argument works for almost all $s'\in[t,T]$, as mentioned in the beginning.

\textsc{Case 2:} We can decompose $\Omega=\Omega_{[0,t]}\times\Omega_{(t,T]}$, where the first component contains only the information about $\mathcal{F}_0$ and the noise on $[0,t]$, while the second component is generated by $(W_{t+h}-W_t)_{h\in [0,T-t]}$. If we fix the first component $\omega_1\in \Omega_{[0,t]}$, $X_{t_1}(\omega_1,\cdot)$ becomes a constant from $\mathbb{R}^n$ and $X(\omega_1,\cdot),Y(\omega_1,\cdot),Z(\omega_1,\cdot)$ still solve the FBSDE together with the decoupling condition. This implies $\|Z(\omega_1,\cdot)\|_\infty\leq L_{u,x}\cdot\|\sigma\|_\infty$. Since $\omega_1$ is arbitrary, we have $\|Z\|_\infty\leq L_{u,x}\cdot\|\sigma\|_\infty$.
\end{proof}

Next we investigate the continuity of $u$ as a function of time and space.

\begin{lem}\label{contin}
Assume that $\mu,\sigma,f$ satisfy
$$(|\mu|^2+|\sigma|^2+|f|^2)(t,x,y,z)\leq C_1+C_2|x|^2$$
with some fixed constants $C_1,C_2>0$ (and arbitrary $t,x,y,z$). \\
If we have a strongly regular and deterministic decoupling field $u$ to $\fbsde(\xi,(\mu,\sigma,f))$ on an interval $[t,T]$, then $u$ is continuous.
\end{lem}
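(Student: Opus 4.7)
The plan is to exploit that weak regularity already gives Lipschitz continuity of $u$ in $x$ with a single constant $L_{u,x}$ valid for all times, so joint continuity reduces to continuity in $s$ at each fixed $x_0 \in \mathbb{R}^n$. The central tool is the decoupling identity $Y_r = u(r, X_r)$, which, combined with path continuity of the associated forward and backward processes, transfers continuity of the random trajectories into continuity of the deterministic map $u$ (deterministic by Lemma \ref{deter}).

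For right-continuity at $t_0 \in [t, T)$ with fixed $x_0$, I would use strong regularity (Corollary \ref{regul}) to run the FBSDE on $[t_0, T]$ with constant initial condition $X_{t_0} = x_0$. Both $X$ and $Y$ admit continuous modifications and satisfy $Y_s = u(s, X_s)$ throughout $[t_0, T]$. The triangle inequality
$$|u(s, x_0) - u(t_0, x_0)| \leq L_{u,x}|X_s - x_0| + |Y_s - Y_{t_0}|$$
drives the left-hand side a.s.\ to $0$ as $s \downarrow t_0$, and since it is deterministic the convergence holds in the ordinary sense.

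For left-continuity at $t_0 \in (t, T]$, choose $t_n \uparrow t_0$ and solve the FBSDE on each $[t_n, T]$ with initial condition $X^{(n)}_{t_n} = x_0$, obtaining $(X^{(n)}, Y^{(n)}, Z^{(n)})$. Since $Y^{(n)}_{t_n} = u(t_n, x_0)$ is deterministic and $Y^{(n)}_{t_0} = u(t_0, X^{(n)}_{t_0})$, taking expectations in the BSDE (the stochastic integral has vanishing mean because of the $L^2$-estimate on $Z^{(n)}$ provided by strong regularity) gives
$$u(t_n, x_0) - u(t_0, x_0) = \mathbb{E}\bigl[u(t_0, X^{(n)}_{t_0}) - u(t_0, x_0)\bigr] - \mathbb{E}\biggl[\int_{t_n}^{t_0} f(r, X^{(n)}_r, Y^{(n)}_r, Z^{(n)}_r)\dx r\biggr].$$
Lipschitz continuity of $u$ in $x$ combined with Cauchy-Schwarz yields
$$|u(t_n, x_0) - u(t_0, x_0)| \leq L_{u,x}\bigl(\mathbb{E}[|X^{(n)}_{t_0} - x_0|^2]\bigr)^{1/2} + \sqrt{t_0 - t_n}\,\biggl(\mathbb{E}\int_{t_n}^{t_0}|f|^2\dx r\biggr)^{1/2}.$$
The linear growth hypothesis together with a standard Gronwall argument on the forward SDE bounds $\sup_n \sup_{r \in [t_n, T]}\mathbb{E}[|X^{(n)}_r|^2]$ uniformly in $n$, so both right-hand terms vanish as $t_n \uparrow t_0$.

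Joint continuity then follows from $|u(s_n, x_n) - u(s_0, x_0)| \leq L_{u,x}|x_n - x_0| + |u(s_n, x_0) - u(s_0, x_0)|$ for $(s_n, x_n) \to (s_0, x_0)$. The main obstacle is the left-continuity step: the processes $(X^{(n)}, Y^{(n)}, Z^{(n)})$ are defined on shifting intervals, so one cannot appeal to pathwise continuity of a single solution and must instead compare values at distinct starting times through the BSDE itself, relying only on the a priori $L^2$-estimates granted by strong regularity and on the linear growth condition to close the Gronwall loop.
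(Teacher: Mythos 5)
Your proposal is correct and its core is the paper's own argument: take expectations in the backward equation, use the $L_{u,x}$-Lipschitz bound together with the forward equation to control $\mathbb{E}[|X^{(n)}_{t_0}-x_0|]$, and use the linear-growth condition with Gronwall for the uniform second-moment bound. The paper runs exactly this estimate for arbitrary $t_1<t_2$, obtaining a local H\"older-$\frac{1}{2}$ modulus in time uniformly on compacts in $x$, so your separate pathwise right-continuity step is redundant --- the same expectation argument started at $(t_0,x_0)$ also covers $s\downarrow t_0$.
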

\begin{proof}
Choose any $t_1<t_2$ from the interval $[t,T]$ and some $x\in\mathbb{R}^n$ as initial value. Then
$$ X_s=x+\int_{t_1}^s\mu(r,X_r,Y_r,Z_r)\dx r+\int_{t_1}^s\sigma(r,X_r,Y_r,Z_r)\dx W_r, $$
$$ Y_s=Y_{t_2}-\int_{s}^{t_2}f(r,X_r,Y_r,Z_r)\dx r-\int_{s}^{t_2}Z_r^\top\dx W_r, $$
$$ Y_s=u(s,X_s). $$
First of all note that the required inequality allows us to control $\mathbb{E}[|X_r|^2]$ independently of the interval $[t_1,t_2]$ or $r\in[t_1,t_2]$. This is because
$$ |X_s|^2=x^2+2\int_{t_1}^sX_r^\top\mu(r,X_r,Y_r,Z_r)\dx r+2\int_{t_1}^sX_r^\top\sigma(r,X_r,Y_r,Z_r)\dx W_r+\int_{t_1}^s|\sigma|^2(r,X_r,Y_r,Z_r)\dx r, $$
due to It\^o's formula. So
$$ \mathbb{E}[|X_s|^2]\leq x^2+\int_{t_1}^s\left( C+C\mathbb{E}[|X_r|^2]\right)\dx r $$
with some constant $C>0$, depending on $C_1,C_2$. Using Gronwall's lemma we obtain
\begin{equation}\label{Xcontrol}
\mathbb{E}[|X_s|^2]\leq (x^2+(T-t)C)e^{C(T-t)}.
\end{equation}

We already know that $u$ is Lipschitz continuous in $x$. It remains to show the following H\"older continuity property:
\begin{equation}\label{ucontrol}
|u(t_1,x)-u(t_2,x)|\leq (C+C|x|)|t_1-t_2|^{\frac{1}{2}}
\end{equation}
 for some constant $C>0$ and all $t_1, t_2\in[t,T]$, $x\in\mathbb{R}^n$.

We use the triangle inequality together with the decoupling condition $Y_s=u(s,X_s)$, to get
\begin{multline*}
|u(t_2,x)-u(t_1,x)|\leq |u(t_2,x)-\mathbb{E}[u(t_2,X_{t_2})]|+|\mathbb{E}[u(t_2,X_{t_2})]-u(t_1,x)|= \\
=|\mathbb{E}[u(t_2,x)-u(t_2,X_{t_2})]|+|\mathbb{E}[Y_{t_2}-Y_{t_1}]|\leq L\mathbb{E}[|X_{t_2}-x|]+|\mathbb{E}[Y_{t_2}-Y_{t_1}]|.
\end{multline*}
The Cauchy-Schwarz and Minkowski inequalities lead to
$$ |u(t_2,x)-u(t_1,x)|\leq  L\sqrt{\mathbb{E}[|X_{t_2}-x|^2]}+|\mathbb{E}[Y_{t_2}-Y_{t_1}]| \leq $$
$$ \leq L\left(\mathbb{E}\left[\left|\int_{t_1}^{t_2}\mu(r,X_r,Y_r,Z_r)\dx r\right|^2\right]\right)^{\frac{1}{2}}+L\left(\mathbb{E}\left[\left|\int_{t_1}^{t_2}\sigma(r,X_r,Y_r,Z_r)\dx W_r\right|^2\right]\right)^{\frac{1}{2}}+ $$
$$+\left|\int_{t_1}^{t_2}\mathbb{E}[f(r,X_r,Y_r,Z_r)]\dx r\right|\leq $$
$$ \leq L\sqrt{t_2-t_1}\left(\mathbb{E}\left[\int_{t_1}^{t_2}|\mu(r,X_r,Y_r,Z_r)|^2\dx r\right]\right)^{\frac{1}{2}}+L\left(\mathbb{E}\left[\int_{t_1}^{t_2}|\sigma|^2(r,X_r,Y_r,Z_r)\dx r\right]\right)^{\frac{1}{2}}+ $$
$$+\int_{t_1}^{t_2}\mathbb{E}[\left|f(r,X_r,Y_r,Z_r)\right|]\dx r\leq $$
$$ \leq L\sqrt{t_2-t_1}\left(\mathbb{E}\left[\int_{t_1}^{t_2}C_1+C_2|X_r|^2\dx r\right]\right)^{\frac{1}{2}}+L\left(\mathbb{E}\left[\int_{t_1}^{t_2}C_1+C_2|X_r|^2\dx r\right]\right)^{\frac{1}{2}}+ $$
$$+\int_{t_1}^{t_2}\sqrt{\mathbb{E}[C_1+C_2|X_r|^2]}\dx r\leq $$
$$ \leq L\left(C+C\sqrt{\sup_{r\in[t_1,t_2]}\mathbb{E}[|X_r|^2]}\right)\sqrt{t_2-t_1} \sqrt{t_2-t_1}+L\left(C+C\sqrt{\sup_{r\in[t_1,t_2]}\mathbb{E}[|X_r|^2]}\right)\sqrt{t_2-t_1}+$$
$$+\left(C+C\sqrt{\sup_{r\in[t_1,t_2]}\mathbb{E}[|X_r|^2]}\right)(t_2-t_1), $$
with some constant $C$ depending on $C_1,C_2$. Now using (\ref{Xcontrol}) we obtain inequality (\ref{ucontrol}).
\end{proof}

Now we come to a local existence result.

\begin{thm}\label{LocLip}
Let
\begin{itemize}
\item $\mu,\sigma,f$ be
\begin{itemize}
\item deterministic,
\item Lipschitz continuous in $x,y,z$ on sets of the form $[0,T]\times\mathbb{R}^n\times B_1 \times B_2$, where $B_1\subset \mathbb{R}^{m}$ and $B_2\subset \mathbb{R}^{m\times d}$ are arbitrary bounded sets
\item and such that $\|\mu(\cdot,0,0,0)\|_\infty,\|f(\cdot,\cdot,0,0)\|_{\infty},\|\sigma\|_{\infty},\|\sigma_z\|_{\infty}<\infty$,
\end{itemize}
\item $\xi: \mathbb{R}^n\rightarrow \mathbb{R}^m$ be bounded and such that $L_{\xi,x}<L_{\sigma,z}^{-1}$.
\end{itemize}
Then there exists a time $t\in[0,T)$ such that $\fbsde (\xi,(\mu,\sigma,f))$ has a unique bounded and weakly regular decoupling field $u$ on $[t,T]$. This $u$ is also
\begin{itemize}
\item strongly regular,
\item deterministic,
\item continuous and
\item satisfies $\sup_{t_1,t_2,X_{t_1}}\left(\|Y\|_\infty+\|Z\|_\infty\right)<\infty$, where $t_1<t_2$ are from $[t,T]$ and $X_{t_1}$ is an initial value (see the definition of a decoupling field for the meaning of these variables).
\end{itemize}
\end{thm}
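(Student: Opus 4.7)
The plan is to reduce the locally Lipschitz, deterministic FBSDE to one satisfying the hypotheses of Theorem \ref{locexist} via truncation of the non-Lipschitz variables $y,z$, and then invoke the Markovian tools developed earlier in the section to show that the truncation is inactive when $T-t$ is small. Fix $R>0$ and a $1$-Lipschitz projection $\pi_R$ onto the closed ball of radius $R$ in $\mathbb{R}^m$ resp.\ $\mathbb{R}^{m\times d}$, and set
\[
\tilde\mu(t,x,y,z):=\mu(t,x,\pi_R(y),\pi_R(z)),\quad \tilde\sigma(t,x,y,z):=\sigma(t,x,\pi_R(y),\pi_R(z)),\quad \tilde f(t,x,y,z):=f(t,x,\pi_R(y),\pi_R(z)).
\]
Because $\pi_R$ is a contraction and $\mu,\sigma,f$ are Lipschitz on bounded $(y,z)$-sets, the triple $(\tilde\mu,\tilde\sigma,\tilde f)$ is globally Lipschitz in $(x,y,z)$ (with a constant $L_R$ depending on $R$), $\|\tilde\sigma\|_\infty\le\|\sigma\|_\infty$, $L_{\tilde\sigma,z}\le L_{\sigma,z}$, and $\|(|\tilde\mu|+|\tilde f|+|\tilde\sigma|)(\cdot,\cdot,0,0,0)\|_\infty<\infty$. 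Since $L_{\xi,x}<L_{\sigma,z}^{-1}\le L_{\tilde\sigma,z}^{-1}$, Theorem \ref{locexist} then provides a unique weakly regular decoupling field $\tilde u$ for $\fbsde(\xi,(\tilde\mu,\tilde\sigma,\tilde f))$ on some $[t,T]$, with associated processes $(\tilde X,\tilde Y,\tilde Z)$ for every $\mathcal{F}_{t_1}$-measurable initial condition.

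The additional regularity properties come for free from the Markovian toolkit: Corollary \ref{regul} upgrades $\tilde u$ to strongly regular, Lemma \ref{deter} makes it deterministic (the truncated coefficients remain deterministic), and Lemma \ref{contin} gives continuity --- its quadratic growth hypothesis $(|\tilde\mu|^2+|\tilde\sigma|^2+|\tilde f|^2)(t,x,y,z)\le C_1+C_2|x|^2$ holds because $\tilde\sigma,\tilde f$ are uniformly bounded (using $\|f(\cdot,\cdot,0,0)\|_\infty<\infty$ and the fact that $\pi_R$ takes values in $\overline{B_R}$), while $\mu(t,x,0,0)$ is globally Lipschitz in $x$ with $\|\mu(\cdot,0,0,0)\|_\infty<\infty$. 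Lemma \ref{Zcontrol} then bounds $\|\tilde Z\|_\infty\le L_{\tilde u,x}\|\sigma\|_\infty$, Remark \ref{bbbbrem} gives $L_{\tilde u,x}\le L_{\xi,x}+C_R(T-t)^{1/4}$, and taking conditional expectations in the backward equation yields the $R$-dependent but $X_{t_1}$-independent bound
\[
|\tilde Y_s|\le\|\xi\|_\infty+(T-s)\,M(R),\qquad M(R):=\|f(\cdot,\cdot,0,0)\|_\infty+2R\,L_f(R),
\]
where $L_f(R)$ is the Lipschitz constant of $f$ in $(y,z)$ on $\overline{B_R}\times\overline{B_R}$.

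Now pick $R$ strictly larger than both $\|\xi\|_\infty$ and $L_{\xi,x}\|\sigma\|_\infty$, and then pick $t<T$ close enough to $T$ (depending on $R$) so that $C_R(T-t)^{1/4}\|\sigma\|_\infty<R-L_{\xi,x}\|\sigma\|_\infty$ and $(T-t)M(R)<R-\|\xi\|_\infty$. This keeps $\|\tilde Y\|_\infty,\|\tilde Z\|_\infty<R$ almost surely, uniformly in the initial condition, so $\pi_R$ acts as the identity along the trajectory and $(\tilde\mu,\tilde\sigma,\tilde f)$ coincides with $(\mu,\sigma,f)$ there; hence $\tilde u$ is in fact a decoupling field for the original $\fbsde(\xi,(\mu,\sigma,f))$ with all the stated properties, including the uniform $Y,Z$-bounds. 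For uniqueness, any candidate bounded weakly regular $u$ produces via Corollary \ref{regul} and Lemma \ref{Zcontrol} a triple $(X,Y,Z)$ with finite $\|Y\|_\infty$ and $\|Z\|_\infty$; enlarging $R$ above these bounds makes $(X,Y,Z)$ a solution of the truncated FBSDE as well, so Corollary \ref{uniq} forces $u=\tilde u$. The hard part will be the interdependence of $R$ and $T-t$: enlarging $R$ enlarges the Lipschitz constants of the truncated coefficients, which shrinks the interval allowed by Theorem \ref{locexist} and inflates $C_R$; the resolution is that the two $R$-thresholds to be beaten --- $\|\xi\|_\infty$ and $L_{\xi,x}\|\sigma\|_\infty$ --- are themselves independent of $R$, so $R$ can be fixed once, and only $T-t$ subsequently has to be shrunk.
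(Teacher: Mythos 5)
Your proposal is correct and follows essentially the same route as the paper's own proof: an inner cutoff of $(y,z)$ to obtain a globally Lipschitz problem, Theorem \ref{locexist} for local existence and uniqueness, Lemma \ref{deter}, Lemma \ref{contin}, Remark \ref{bbbbrem} and Lemma \ref{Zcontrol} for the deterministic/continuity/$u_x$/$Z$ statements, a conditional-expectation bound on $Y$ from the backward equation, and the same resolution of the $R$ versus $T-t$ interdependence (the thresholds $\|\xi\|_\infty$ and $L_{\xi,x}\|\sigma\|_\infty$ are cutoff-independent, so fix $R$ first, then shrink $T-t$). The uniqueness step also matches the paper's passive-cutoff argument; the only cosmetic caveat is that Corollary \ref{regul} and Lemma \ref{Zcontrol} are stated for globally Lipschitz coefficients, so formally they should be invoked for the truncated problem with a cutoff passive for both candidates (which is precisely the implicit step in the paper's own wording), rather than for the original locally Lipschitz problem directly.
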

\begin{proof}
For any constant $H>0$ let $\chi_{H}:\mathbb{R}^m\times\mathbb{R}^{m\times d}\rightarrow \mathbb{R}^m\times\mathbb{R}^{m\times d}$ be the projection onto the ball of radius $H$ with center $0\in \mathbb{R}^m\times\mathbb{R}^{m\times d}$. Note that $\chi_{H}$ is Lipschitz continuous with Liptschitz constant $L_{\chi_{H}}=1$ and bounded such that $\|\chi_{H}\|_{\infty}=H$.

We implement an "inner cutoff" by defining $\mu_H,\sigma_H,f_H$ via $\mu_H(t,x,y,z):=\mu(t,x,\chi_H(y,z))$, etc.

This makes $\mu_H,\sigma_H,f_H$ Lipschitz continuous with some Lipschitz constant $L_H$. Furthermore $L_{\sigma_H,z}\leq L_{\sigma,z}$. According to Theorem \ref{locexist} we know that the problem given by $\fbsde (\xi,(\mu_H,\sigma_H,f_H))$ has a unique solution $u$ with $L_{u,x}<L_{\sigma,z}^{-1}$ and $\|u(\cdot,\cdot,0)\|_{\infty}<\infty$ on some small interval. We also know that this $u$ is strongly regular. Furthermore, $u$ is deterministic (Lemma \ref{deter}) and continuous (Lemma \ref{contin}).

We will show that for sufficiently large $t<T$ it will also be a solution to the problem $\fbsde (\xi,(\mu,\sigma,f))$. \\

Using Remark \ref{bbbbrem}, we can bound the operator norm $\sup_{v\in S^{n-1}}|u_x|_v$ of $u_x$ by $L_{\xi,x}+C_H(T-t)^{\frac{1}{4}},$
where $C_H<\infty$ is some constant, which does not depend on $t\in[t',T]$, where $t'<T$ is fixed, but does depend on $H$. Now using Lemma \ref{Zcontrol} we have
$$ \|Z\|_\infty\leq C_\sigma L_{\xi,x}+C_\sigma C_H(T-t)^{\frac{1}{4}}, $$
where $C_\sigma$ is proportional to $\|\sigma\|_\infty$. \\

We can bound $u$ itself as well. In fact, using the boundedness of $\xi$ and
$$Y_s+\int_{s}^{T}Z_r\dx W_r=\xi(X_T)-\int_{s}^{T}f_H(r,X_r,Y_r,Z_r)\dx r,\qquad s\in [t,T],$$
we conclude that $Y$ must be bounded, due to Lipschitz continuity of $f$ on compact sets and the use of a cutoff (the boundedness of $f(r,X_r,0,0)$ is also used):
$$ \left(|Y_s|^2+\mathbb{E}\left[\int_{s}^{T}|Z_r|^2\dx r\bigg|\mathcal{F}_s\right]\right)^{\frac{1}{2}}
\leq \|\xi\|_\infty+\sqrt{T-s}\left(\mathbb{E}\left[\int_{s}^{T}|f(r,X_r,0,0)|^2\dx r\bigg|\mathcal{F}_s\right]\right)^{\frac{1}{2}}+$$
$$ +\sqrt{T-s}\left(
\left(\mathbb{E}\left[\int_{s}^{T}|f(r,X_r,\chi_H(Y_r,Z_r))-f(r,X_r,0,0)|^2\dx r\bigg|\mathcal{F}_s\right]\right)^{\frac{1}{2}}\right).$$
This leads to the bound $$\|Y\|_\infty\leq \check{C}+C_H\sqrt{T-t},$$ where only the second constant $C_H$ depends on the cutoff. $\check{C}$ is a function of $\|\xi\|_\infty$ and $\|f(\cdot,\cdot,0,0)\|_\infty$. \\

Now we only need to
\begin{itemize}
\item choose $H$ large enough such that $C_\sigma\cdot L_{\xi,x}$ and $\check{C}$ are both smaller $\frac{H}{2}$,
\item and then in the second step choose $t$ close enough to $T$, such that
$$ C_\sigma C_H(T-t)^{\frac{1}{4}}\textrm{ and }C_H\sqrt{T-t} $$
become smaller than $\frac{H}{2}$.
\end{itemize}
This means that $Y_s$ and $Z_s$, $s\in[t,T]$, a.e. do not leave the region in which the cutoff is "passive". \\
Therefore, $u$ is a decoupling field to $\fbsde (\xi,(\mu,\sigma,f))$, not just to $\fbsde (\xi,(\mu_H,\sigma_H,f_H))$.\\
Note furthermore that $u$ is unique since we only consider decoupling fields $\tilde{u}$ that are bounded and Lipschitz in $x$ (with $L_{\tilde{u},x}<L_{\sigma,z}^{-1}$), which means that we can always choose a cuttoff which is passive for $\tilde{u}$ \emph{and} $u$, allowing us to apply uniqueness from Corollary \ref{uniq}. \\
Strong regularity also follows from Theorem \ref{locexist}. \\
Similarly $u(s,x)$ must also be a.s. constant for all fixed $s\in[t,T]$, $x\in \mathbb{R}^n$.
\end{proof}

\begin{rem}\label{hchoiceM}
We observe from the proof that the supremum of all $h=T-t$ with $t$ satisfying the hypotheses of Theorem \ref{LocLip} can be bounded away from $0$ by a bound, which only depends on
\begin{itemize}
\item $L_{\xi,x}$, $L_{\xi,x}\cdot \|\sigma\|_{\infty}$ and $L_{\xi,x}\cdot L_{\sigma,z}$,
\item $\|\xi\|_{\infty}$ and $\|f(\cdot,\cdot,0,0)\|_\infty$,
\item the values $(L_H)_{H\in[0,\infty)}$ where $L_H$ is the Lipschitz constant of $\mu,\sigma$ and $f$ on $[0,T]\times\mathbb{R}^n\times B^1_H \times B^2_H$ w.r.t. to the last $3$ components, where $B^1_H\subset \mathbb{R}^{m}$ and $B^2_H\subset \mathbb{R}^{m\times d}$ are balls of radius $H$ with center $0$
\end{itemize}
and which is monotonically decreasing in these values.
\end{rem}

The following natural concept introduces a class of decoupling fields for non-Lipschitz problems (non-Lipschitz in $z$), to which nevertheless standard Lipschitz results can be applied.

\begin{defi}
Let $u$ be a decoupling field for $\fbsde(\xi,(\mu,\sigma,f))$. We call $u$ \emph{controlled in $z$} if there exists a constant $C>0$ such that for all $t_1,t_2\in[t,T]$, $t_1\leq t_2$, and all initial values $X_{t_1}=x\in\mathbb{R}^n$, the corresponding processes $X,Y,Z$ from the definition of a decoupling field satisfy $|Z_s(\omega)|\leq C$,
for almost all $(s,\omega)\in[t,T]\times\Omega$. If for a fixed triplet $(t_1,t_2,X_{t_1})$ there are different choices for $X,Y,Z$, then all of them are supposed to satisfy the above control.

We say that a decoupling field on $[t,T]$ is \emph{controlled in $z$} on a subinterval $[t_1,t_2]\subseteq[t,T]$ if $u$ restricted to $[t_1,t_2]$ is a decoupling field for $\fbsde(u(t_2,\cdot),(\mu,\sigma,f))$ that is controlled in $z$.

Furthermore we call a decoupling field on an interval $(s,T]$ \emph{controlled in $z$} if it is controlled in $z$ on every compact subinterval $[t,T]\subseteq (s,T]$ (with $C$ possibly depending on $t$).
\end{defi}

\begin{rem}
Our decoupling field from Theorem \ref{LocLip} is obviously controlled in $z$.
\end{rem}

\begin{rem}
Let $\mu,\sigma,f,\xi$ be as in Theorem \ref{LocLip} and assume that we have a decoupling field $u$ on some interval $[t,T]$, which is strongly regular, bounded and controlled in $z$. Then $u$ is also a solution to a Lipschitz problem obtained through a cutoff as in Theorem \ref{LocLip}. Furthermore, Lemma \ref{deter} is applicable since $u$ is a \emph{unique} strongly regular decoupling field (to a Lipschitz problem) according to Corollary \ref{uniq}. So $u$ is \underline{deterministic}. But now Lemma \ref{contin} is also applicable since due to the use of a cutoff we can assume Lipschitz continuity and thereby linear growth. So $u$ will also be \underline{continuous}.
\end{rem}

\begin{lem}\label{gluecontrolM}
Let $g,\mu,\sigma,f$ be as in Theorem \ref{LocLip}. For $0\leq s<t<T$ let $u$ be a bounded and weakly regular decoupling field for $\fbsde (\xi,(\mu,\sigma,f))$ on $[s,T]$.\\
If $u$ is controlled in $z$ on $[s,t]$ and $T-t$ is small enough as required in Theorem \ref{LocLip} resp. Remark \ref{hchoiceM} then $u$ is controlled in $z$ on $[s,T]$.
\end{lem}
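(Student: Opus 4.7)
The plan is to apply Theorem \ref{LocLip} on the interval $[t,T]$ to obtain a $z$-bound there, and then to patch this with the hypothesised $z$-bound on $[s,t]$ by restricting any admissible triple to the two sub-intervals separated by $t$.

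First I would invoke Theorem \ref{LocLip} on $[t,T]$: since $T-t$ is small enough in the sense of Remark \ref{hchoiceM}, there exists a bounded weakly regular decoupling field $\tilde u$ on $[t,T]$, and this $\tilde u$ is controlled in $z$ with some constant $C_2<\infty$ (as stated in the theorem and reflected in the bound on $\|Z\|_\infty$). The restriction $u|_{[t,T]}$ is itself a bounded weakly regular decoupling field on $[t,T]$, and uniqueness within this class -- obtained via the cutoff trick from the proof of Theorem \ref{LocLip}, which is passive for both $u|_{[t,T]}$ and $\tilde u$ and then appeals to Corollary \ref{uniq} -- forces $u|_{[t,T]}=\tilde u$ up to modification. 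Consequently $u|_{[t,T]}$ is controlled in $z$ with the same constant $C_2$. Let $C_1$ denote the $z$-bound on $[s,t]$ provided by the hypothesis.

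Next I would glue. Fix $t_1,t_2\in[s,T]$ with $t_1\le t_2$, a deterministic initial value $X_{t_1}=x\in\mathbb{R}^n$, and any associated processes $(X,Y,Z)$ on $[t_1,t_2]$ from the definition of a decoupling field. The cases $t_2\le t$ and $t_1\ge t$ are immediate from the respective control on $[s,t]$ or $[t,T]$. In the remaining case $t_1<t<t_2$, the restriction $(X,Y,Z)|_{[t_1,t]}$ trivially satisfies the forward equation, the backward equation with terminal value $Y_t$, and the decoupling relation $Y_r=u(r,X_r)$ on $[t_1,t]$; hence it is an admissible triple for $u|_{[s,t]}$ on $[t_1,t]$ with initial value $x$, and the hypothesised control gives $|Z|\le C_1$ almost everywhere on $[t_1,t]\times\Omega$. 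For the restriction on $[t,t_2]$ the initial value $X_t$ is $\mathcal{F}_t$-measurable rather than constant, so I would decompose $\Omega=\Omega_{[0,t]}\times\Omega_{(t,T]}$ as in the proof of Lemma \ref{Zcontrol}: fixing $\omega_1\in\Omega_{[0,t]}$ makes $X_t(\omega_1,\cdot)$ a constant in $\mathbb{R}^n$, so $(X,Y,Z)(\omega_1,\cdot)$ is an admissible triple for $u|_{[t,T]}$ with deterministic initial value and therefore satisfies $|Z(\omega_1,\cdot)|\le C_2$ a.e. Since this holds for a.e.\ $\omega_1$ one obtains $|Z|\le C_2$ a.e.\ on $[t,t_2]\times\Omega$. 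Combining, $|Z|\le C_1\vee C_2$ on the whole of $[t_1,t_2]\times\Omega$, which is the desired uniform control.

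The main obstacle I expect is the first step: one must know that the $z$-bound established by Theorem \ref{LocLip} really transfers to every admissible triple of $u|_{[t,T]}$, not merely to the distinguished triple produced by the Picard fixed-point construction with cutoff. This is handled by exactly the cutoff/uniqueness mechanism already used inside the proof of Theorem \ref{LocLip}: $u|_{[t,T]}$ being bounded and weakly regular lets one choose an inner cutoff in $(y,z)$ of radius large enough to be passive both for $\tilde u$ and for any a priori admissible triple of $u|_{[t,T]}$ (the required universal bound on $\|Z\|_\infty$ in the cutoff Lipschitz problem coming from Lemma \ref{Zcontrol}, which yields $\|Z\|_\infty\le L_{u,x}\|\sigma\|_\infty$ independently of the triple). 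Once the equality $u|_{[t,T]}=\tilde u$ is established at the level of decoupling fields, the sets of admissible triples coincide and the uniform $z$-bound transfers. The gluing step itself is then just bookkeeping together with the product-space decomposition of $\Omega$ used elsewhere in the paper.
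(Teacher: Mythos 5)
Your proof is correct and follows essentially the same route as the paper: obtain the $z$-control on $[t,T]$ by identifying $u|_{[t,T]}$, via the uniqueness in Theorem \ref{LocLip}, with the decoupling field constructed there (which is controlled in $z$), take the hypothesised control on $[s,t]$, and glue by restricting any admissible triple to $[t_1,t]$ and $[t,t_2]$. Your additional product-space decomposition of $\Omega$ at the junction merely makes explicit how the $\mathcal{F}_t$-measurable initial value $X_t$ is reduced to constant initial values, a detail the paper's shorter argument leaves implicit, while your closing re-derivation of the Theorem \ref{LocLip} uniqueness via a passive cutoff is unnecessary since that uniqueness is already part of the cited theorem.
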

\begin{proof}
Clearly $u$ is controlled in $z$ on $[s,t]$ and also on $[t,T]$ (with some constants). Define $C$ as the maximum of these constants. \\
We only need to control $Z$ for the case $s\leq t_1\leq t\leq t_2\leq T$, the other two cases being trivial. \\
The processes $X,Y,Z$ corresponding to the interval $[t_1,t_2]$ with an initial value $X_{t_1}$ will satisfy $|Z_r|\leq C$ for $r\in[t_1,t]$. At the same time, if we restrict $X,Y,Z$ to $[t,t_2]$, we observe that these restrictions satisfy the forward equation, backward equation and the decoupling condition for the interval $[t,t_2]$ with $X_t$ as initial value. Therefore $|Z_r|\leq C$ also holds for $r\in [t,t_2]$.
\end{proof}

As a consequence we can inductively show that sufficiently regular decoupling fields must be controlled in $z$.

\begin{cor}\label{controllM}
Let $\mu,\sigma,f,\xi$ be as in Theorem \ref{LocLip}. Assume that there exists a bounded and weakly regular decoupling field $u$ to this problem on some interval $[t,T]$. Then $u$ is controlled in $z$.
\end{cor}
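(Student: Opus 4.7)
The plan is to use the forward small interval induction (Lemma \ref{siif}) together with the local result of Theorem \ref{LocLip} and the gluing lemma Lemma \ref{gluecontrolM}. Define
$$ S := \{s \in [t,T] \,:\, u \text{ is controlled in } z \text{ on } [t,s]\}. $$
Clearly $t \in S$ (the control is vacuous on a single point, or alternatively, $Z \equiv 0$ suffices there). To close the induction I need an $h>0$, independent of $s$, with $[s,(s+h)\wedge T]\subseteq S$ for every $s\in S$.

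Fix $s \in S$ and some $r \in (s,T]$ with $r-s$ small, to be determined. Consider the FBSDE on $[s,r]$ with terminal condition $u(r,\cdot)$. Since $u$ is weakly regular and bounded on $[t,T]$, we have $L_{u(r,\cdot),x}\leq L_{u,x}<L_{\sigma,z}^{-1}$ and $\|u(r,\cdot)\|_\infty\leq\|u\|_\infty<\infty$, both uniformly in $r\in[t,T]$. The remaining data ($\|\sigma\|_\infty$, $L_{\sigma,z}$, $\|f(\cdot,\cdot,0,0)\|_\infty$, and the local Lipschitz constants $L_H$) are fixed. Hence by Remark \ref{hchoiceM} there is a single $h>0$, depending only on these uniform bounds and not on $s$ or $r$, such that Theorem \ref{LocLip} applies on any interval of length at most $h$. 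Choose $r:=(s+h)\wedge T$, so that $r-s\leq h$.

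By Theorem \ref{LocLip} (applied with terminal time $r$ and terminal condition $u(r,\cdot)$) there exists on $[s,r]$ a bounded, weakly regular, and in particular controlled-in-$z$ decoupling field $\tilde u$ for $\fbsde(u(r,\cdot),(\mu,\sigma,f))$. By uniqueness among bounded weakly regular decoupling fields (Corollary \ref{uniq}) one must have $\tilde u = u|_{[s,r]}$ up to modification. Therefore $u$ is controlled in $z$ on $[s,r]$. Combining this with the induction hypothesis that $u$ is controlled in $z$ on $[t,s]$, Lemma \ref{gluecontrolM} (applied with left endpoint $t$, middle point $s$, and right endpoint $r$; the smallness condition on $r-s$ is satisfied by our choice of $h$) yields that $u$ is controlled in $z$ on $[t,r]$, i.e.\ $r \in S$. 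Hence $[s,(s+h)\wedge T]\subseteq S$.

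Since $h$ is independent of $s$, Lemma \ref{siif} gives $S=[t,T]$, so $T\in S$, meaning $u$ is controlled in $z$ on all of $[t,T]$. The only point that deserves some care is the uniform choice of $h$, which relies crucially on the fact that weak regularity and boundedness of $u$ on $[t,T]$ provide bounds on $L_{u(r,\cdot),x}$ and $\|u(r,\cdot)\|_\infty$ that are independent of $r$; once this is noted, everything else reduces to quoting Theorem \ref{LocLip}, Corollary \ref{uniq} and Lemma \ref{gluecontrolM}.
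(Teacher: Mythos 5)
Your argument is essentially the paper's own proof: forward small interval induction starting from the trivial base point $t$, a uniform $h>0$ obtained from Remark \ref{hchoiceM} (using that boundedness and weak regularity of $u$ give bounds on $\|u(r,\cdot)\|_\infty$ and $L_{u(r,\cdot),x}$ independent of $r$), and Lemma \ref{gluecontrolM} for the induction step. The only blemish is the intermediate identification $\tilde u = u|_{[s,r]}$ via Corollary \ref{uniq}, which is stated for globally Lipschitz coefficients and hence not directly citable here (the uniqueness assertion contained in Theorem \ref{LocLip} is the correct reference, while Corollary \ref{uniqM} would be circular); but this step is redundant anyway, since Lemma \ref{gluecontrolM} already produces the control in $z$ on $[s,r]$ internally, so the proof stands.
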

\begin{proof}
Let $S\subseteq [t,T]$ be the set of all times $s\in[t,T]$, s.t. $u$ is controlled in $z$ on $[t,s]$.
\begin{itemize}
\item If $t\in S$, then $Z=0$, $Y=u(t,X_t)$.
\item Let $s\in S$ be arbitrary. According to Lemma \ref{gluecontrolM} there exists an $h>0$ s.t. $u$ is controlled in $z$ on $[t,(s+h)\wedge T]$ since  $\|u((s+h)\wedge T,\cdot)\|_\infty<\infty$, $L_{u((s+h)\wedge T,\cdot)}<\infty$ and $L_{\sigma,z}L_{u((s+h)\wedge T,\cdot)}<1$. Considering Remark \ref{hchoiceM} and the requirements  $\|u\|_\infty<\infty$, $L_{u,x}<\infty$ and $L_{\sigma,z}L_{u,x}<1$, we can choose $h$ independently of $s$.
\end{itemize}
This shows $S=[t,T]$ using small interval induction.
\end{proof}

The property of a decoupling field to be bounded and controlled in $z$ allows us to show the following two results as simple consequences of the theory developed in the Lipschitz case.

\begin{cor}\label{uniqM}
Let $\mu,\sigma,f,\xi$ be as in Theorem \ref{LocLip}. Assume that there are two bounded and weakly regular decoupling fields $u^{(1)},u^{(2)}$ to this problem on some interval $[t,T]$.
Then $u^{(1)}=u^{(2)}$.
\end{cor}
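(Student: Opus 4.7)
The strategy is to reduce the problem to the globally Lipschitz setting covered by Corollary \ref{uniq} by means of the same inner cutoff technique used in the proof of Theorem \ref{LocLip}.

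First, by Corollary \ref{controllM}, each of the decoupling fields $u^{(1)}, u^{(2)}$ is controlled in $z$: there exist constants $C^{(1)}, C^{(2)} < \infty$ such that for any $t_1 \le t_2$ in $[t,T]$ and any admissible initial condition $X_{t_1}$, the associated processes $(X^{(i)}, Y^{(i)}, Z^{(i)})$ satisfy $|Z^{(i)}_s| \le C^{(i)}$ almost everywhere. Combined with the boundedness of $u^{(i)}$ (which gives $|Y^{(i)}_s| = |u^{(i)}(s, X^{(i)}_s)| \le \|u^{(i)}\|_\infty$), I obtain a uniform constant
$$H := \max\bigl(\|u^{(1)}\|_\infty, \|u^{(2)}\|_\infty, C^{(1)}, C^{(2)}\bigr)+1$$
such that the processes $(Y^{(i)}, Z^{(i)})$ arising from either decoupling field take values inside the ball of radius $H$ around $0$ in $\mathbb{R}^m \times \mathbb{R}^{m\times d}$, almost everywhere.

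Now apply the inner cutoff from the proof of Theorem \ref{LocLip}: let $\chi_H$ denote the projection on that ball and define $\mu_H(s,x,y,z) := \mu(s,x,\chi_H(y,z))$, and similarly $\sigma_H, f_H$. These are globally Lipschitz in $(x,y,z)$, satisfy $L_{\sigma_H, z} \le L_{\sigma,z}$, and coincide with $\mu, \sigma, f$ whenever $(y,z)$ lies in the ball of radius $H$. Because the $(X^{(i)}, Y^{(i)}, Z^{(i)})$ from either decoupling field have $(Y^{(i)}, Z^{(i)})$ in that ball, they also solve the forward and backward equations with the Lipschitz coefficients $(\mu_H, \sigma_H, f_H)$, and the decoupling conditions $Y^{(i)}_s = u^{(i)}(s, X^{(i)}_s)$ are unchanged. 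Hence $u^{(1)}$ and $u^{(2)}$ are both weakly regular decoupling fields for $\fbsde(\xi, (\mu_H, \sigma_H, f_H))$ on $[t,T]$, since $L_{u^{(i)}, x} < L_{\sigma,z}^{-1} \le L_{\sigma_H, z}^{-1}$ and $\|u^{(i)}(\cdot,\cdot,0)\|_\infty < \infty$ follow from their weak regularity and boundedness.

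Applying Corollary \ref{uniq} to the globally Lipschitz problem $\fbsde(\xi, (\mu_H, \sigma_H, f_H))$ yields $u^{(1)} = u^{(2)}$ up to modification, which is the desired conclusion. The only substantive step is the selection of $H$: it rests entirely on Corollary \ref{controllM} providing an a priori bound on $Z$ for any bounded weakly regular decoupling field, which is precisely what converts a non-Lipschitz uniqueness question into a Lipschitz one.
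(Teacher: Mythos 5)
Your proof is correct and follows exactly the paper's argument: the paper likewise invokes Corollary \ref{controllM} (control in $z$), chooses a cutoff that is passive for both decoupling fields as in the proof of Theorem \ref{LocLip}, and then applies the Lipschitz uniqueness result Corollary \ref{uniq}. Your version merely spells out the choice of the cutoff radius $H$ and the verification that $u^{(1)},u^{(2)}$ remain weakly regular decoupling fields for the truncated problem, which the paper leaves implicit.
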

\begin{proof}
We know that $u^{(1)}$ and $u^{(2)}$ are controlled in $z$. Choose a passive cutoff (see proof of Theorem \ref{LocLip}) and apply Corollary \ref{uniq}.
\end{proof}

\begin{cor}\label{regulM}
Let $\mu,\sigma,f,\xi$ be as in Theorem \ref{LocLip}. Assume that there exists a bounded and weakly regular decoupling field $u$ to the corresponding FBSDE on some interval $[t,T]$. Then $u$ is strongly regular.
\end{cor}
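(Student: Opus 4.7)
The plan is to reduce to the Lipschitz case handled by Corollary \ref{regul} via the same passive cutoff device used in the proof of Theorem \ref{LocLip}. First I would invoke Corollary \ref{controllM} on $u$ to obtain a constant $C>0$ such that for every $t_1\leq t_2$ in $[t,T]$ and every initial value $X_{t_1}$, the corresponding $Z$ satisfies $|Z|\leq C$ a.e. Since $u$ itself is bounded, the associated $Y=u(\cdot,X_\cdot)$ is uniformly bounded by $\|u\|_\infty$ as well. Hence if I choose $H>0$ so large that $H\geq\|u\|_\infty\vee C$, then along every admissible triplet $(X,Y,Z)$ the projection $\chi_H(Y,Z)=(Y,Z)$ a.e., i.e.\ the cutoff is passive.

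Next, define $\mu_H,\sigma_H,f_H$ by $\mu_H(s,x,y,z):=\mu(s,x,\chi_H(y,z))$ and analogously for $\sigma,f$. These functions are globally Lipschitz in $(x,y,z)$, satisfy $L_{\sigma_H,z}\leq L_{\sigma,z}$, and $\left\|\left(|\mu_H|+|f_H|+|\sigma_H|\right)(\cdot,\cdot,0,0,0)\right\|_{\infty}<\infty$; together with the hypothesis $L_{\xi,x}<L_{\sigma,z}^{-1}\leq L_{\sigma_H,z}^{-1}$ this places the cutoff FBSDE $\fbsde(\xi,(\mu_H,\sigma_H,f_H))$ in the setting of Theorem \ref{locexist}. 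I would then observe that $u$ is itself a weakly regular decoupling field for the cutoff problem on $[t,T]$: any $(X,Y,Z)$ arising from the defining property of $u$ for the original FBSDE also solves the cutoff FBSDE by passivity of $\chi_H$, and the regularity $L_{u,x}<L_{\sigma,z}^{-1}\leq L_{\sigma_H,z}^{-1}$ and $\|u(\cdot,\cdot,0)\|_\infty<\infty$ is inherited.

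Now I apply Corollary \ref{regul} to the cutoff problem: $u$ is strongly regular for $\fbsde(\xi,(\mu_H,\sigma_H,f_H))$ on $[t,T]$. This gives, for every $t_1\leq t_2$ in $[t,T]$ and every constant initial value $X_{t_1}=x\in\mathbb{R}^n$, uniqueness of $(X,Y,Z)$ together with the integrability and weak differentiability estimates required in the definition of strong regularity. To finish, I must transfer these properties back to the original problem. Uniqueness: any $(X,Y,Z)$ admissible for $u$ in the original FBSDE lies in the passive region by the choice of $H$, hence it is an admissible triplet for the cutoff FBSDE with decoupling field $u$, and the uniqueness already proved applies. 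The integrability and weak-differentiability estimates transfer verbatim since the processes coincide.

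The only non-routine point is ensuring that the passive cutoff constant $H$ can be taken uniformly in $(t_1,t_2,X_{t_1})$; this is exactly what the definition of "controlled in $z$" provides (one universal constant $C$ works for every admissible triplet), and it is what Corollary \ref{controllM} delivers. Once that uniformity is in hand, the reduction to Corollary \ref{regul} is immediate and the rest is bookkeeping.
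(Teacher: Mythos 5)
Your proposal is correct and follows exactly the paper's route: invoke Corollary \ref{controllM} to get the uniform bound on $Z$, use boundedness of $u$ to bound $Y$, choose a passive cutoff as in the proof of Theorem \ref{LocLip}, and apply Corollary \ref{regul} to the resulting Lipschitz problem, transferring the conclusions back by passivity. The paper's proof is just a terser statement of this same argument.
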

\begin{proof}
$u$ is controlled in $z$. Choose a passive cutoff (see proof of Theorem \ref{LocLip}) and apply Corollary \ref{regul}.
\end{proof}

Remember the definition of the maximal interval. We aim at working with bounded decoupling fields. But $\mu,\sigma,f$ may depend in a super-linear way on $y$, such that singularities may very well occur because of exploding $u$ rather than exploding $u_x$. We therefore need to define a new type of maximal interval.

\begin{defi}
Let $I^b_{\mathrm{max}}\subseteq[0,T]$ for $\fbsde(\xi,(\mu,\sigma,f))$ be the union of all intervals $[t,T]\subseteq[0,T]$ such that there exists a bounded and weakly regular decoupling field $u$ on $[t,T]$.
\end{defi}

Note that this definition only makes sense if $\xi$ is bounded.

According to the following theorem, we have existence and uniqueness on $I^b_{\mathrm{max}}$.

\begin{thm}\label{globalexistM}
Let $\mu,\sigma,f,\xi$ be as in Theorem \ref{LocLip}. Then there exists a unique locally bounded and weakly regular decoupling field $u$ on $I^b_{\mathrm{max}}$. This $u$ is also controlled in $z$ and strongly regular. \\
Furthermore either $I^b_{\mathrm{max}}=[0,T]$ or $I^b_{\mathrm{max}}=(t^b_{\mathrm{min}},T]$, where $0\leq t^b_{\mathrm{min}}<T$.
\end{thm}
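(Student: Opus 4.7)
The plan closely parallels the proof of Theorem \ref{globalexist} in the non-Markovian Lipschitz case, replacing the uniqueness and regularity corollaries by their Markovian counterparts \ref{uniqM}, \ref{regulM} and \ref{controllM}. For each $t\in I^b_{\mathrm{max}}$, by definition there exists a bounded and weakly regular decoupling field $\check{u}^{(t)}$ on $[t,T]$, and by Corollary \ref{uniqM} it is unique. Whenever $t,t'\in I^b_{\mathrm{max}}$ with $t\leq t'$, the restriction of $\check{u}^{(t)}$ to $[t',T]$ is again bounded and weakly regular, hence coincides with $\check{u}^{(t')}$ by uniqueness. This consistency lets me define $u$ on $I^b_{\mathrm{max}}$ by $u(t,\cdot):=\check{u}^{(t)}(t,\cdot)$; its restriction to any $[t,T]\subseteq I^b_{\mathrm{max}}$ equals $\check{u}^{(t)}$, so $u$ is a decoupling field on all of $I^b_{\mathrm{max}}$ which is locally bounded and weakly regular. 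Uniqueness follows by applying Corollary \ref{uniqM} on each interval $[t,T]\subseteq I^b_{\mathrm{max}}$.

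Strong regularity on every $[t,T]\subseteq I^b_{\mathrm{max}}$ is a direct consequence of Corollary \ref{regulM}, and Corollary \ref{controllM} shows that $u$ is controlled in $z$ on every such interval.

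It remains to establish the shape of $I^b_{\mathrm{max}}$. Since $I^b_{\mathrm{max}}$ is by construction a (possibly empty) subinterval of $[0,T]$ that contains $T$, we only need to exclude the case $I^b_{\mathrm{max}}=[t^b_{\mathrm{min}},T]$ with $t^b_{\mathrm{min}}>0$. Assume for contradiction that this holds. Then $u$ is a bounded and weakly regular decoupling field on $[t^b_{\mathrm{min}},T]$, in particular strongly regular by Corollary \ref{regulM}, and hence $u(t^b_{\mathrm{min}},\cdot)$ is deterministic by Lemma \ref{deter}. Since $u(t^b_{\mathrm{min}},\cdot)$ is bounded with $L_{u(t^b_{\mathrm{min}},\cdot),x}<L_{\sigma,z}^{-1}$, it satisfies the hypotheses on $\xi$ in Theorem \ref{LocLip}. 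Therefore Theorem \ref{LocLip} provides, for some $h>0$, a bounded and weakly regular decoupling field $\tilde{u}$ for the problem with terminal condition $u(t^b_{\mathrm{min}},\cdot)$ on $[(t^b_{\mathrm{min}}-h)\vee 0,\,t^b_{\mathrm{min}}]$. Gluing $\tilde{u}$ and $u$ via Lemma \ref{glue} produces a decoupling field on the larger interval $[(t^b_{\mathrm{min}}-h)\vee 0,\,T]$ which remains bounded and weakly regular, contradicting the definition of $I^b_{\mathrm{max}}$.

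The main technical point is ensuring that the left endpoint trace $u(t^b_{\mathrm{min}},\cdot)$ meets every hypothesis required of a terminal condition in Theorem \ref{LocLip}: determinism (via the combination of strong regularity from Corollary \ref{regulM} with Lemma \ref{deter}), uniform boundedness on $\mathbb{R}^n$, and a Lipschitz constant strictly below $L_{\sigma,z}^{-1}$. Once these properties are secured, the extension step and the resulting contradiction proceed exactly as in the non-Markovian proof of Theorem \ref{globalexist}.
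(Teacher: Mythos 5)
Your proposal is correct and follows essentially the same route as the paper: build $u$ from the unique bounded weakly regular fields $\check{u}^{(t)}$ via Corollary \ref{uniqM}, get strong regularity and control in $z$ from Corollaries \ref{regulM} and \ref{controllM}, and rule out a closed left endpoint $t^b_{\mathrm{min}}>0$ by extending with Theorem \ref{LocLip} and Lemma \ref{glue}. Your extra care in checking that the trace $u(t^b_{\mathrm{min}},\cdot)$ is deterministic, bounded and has Lipschitz constant below $L_{\sigma,z}^{-1}$ is exactly the point the paper leaves implicit (it is justified there by the remark that a bounded, strongly regular decoupling field controlled in $z$ solves a cutoff Lipschitz problem and is hence deterministic), so no gap remains.
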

\begin{proof}
Let $t\in I^b_{\mathrm{max}}$. Obviously there exists a decoupling field $\check{u}^{(t)}$ on $[t,T]$ satisfying $L_{\check{u}^{(t)},x}<L_{\sigma,z}^{-1}$ and $\|\check{u}^{(t)}\|_\infty<\infty$. There is only one such $\check{u}^{(t)}$ according to Corollary \ref{uniqM}. Furthermore, for $t,t'\in I^b_{\mathrm{max}}$ the functions $\check{u}^{(t)}$ and $\check{u}^{(t')}$ coincide on $[t\vee t',T]$ because of Corollary \ref{uniqM}. \\
Define $u(t,\cdot):=\check{u}^{(t)}(t,\cdot)$ for all $t\in I^b_{\mathrm{max}}$. This function $u$ is a decoupling field on $[t,T]$, since it coincides with $\check{u}^{(t)}$ on $[t,T]$. Therefore $u$ is a decoupling field on the whole interval $I^b_{\mathrm{max}}$ and satisfies $L_{u|_{[t,T]},x}<L_{\sigma,z}^{-1}$, $\|u|_{[t,T]}\|_\infty<\infty$ for all $t\in I^b_{\mathrm{max}}$. \\
Uniqueness of $u$ follows directly from Corollary \ref{uniqM} applied to every interval $[t,T]\subseteq I^b_{\mathrm{max}}$. \\
Furthermore $u$ is controlled in $Z$ and strongly regular on $[t,T]$ for all $t\in I^b_{\mathrm{max}}$ due to Corollaries \ref{controllM} and \ref{regulM}. \\
Addressing the form of $I^b_{\mathrm{max}}$, we see that $I^b_{\mathrm{max}}=[t,T]$ with $t\in(0,T]$ is not possible. Assume otherwise. According to Theorem \ref{globalexistM} there exists a decoupling field $u$ on $[t,T]$ s.t. $L_{u,x}<L_{\sigma,z}^{-1}$ and $\|u\|_\infty<\infty$. But then $u$ can be extended a little bit to the left using Theorem \ref{LocLip} (and Lemma \ref{glue}).
\end{proof}

The following result basically states that for a singularity to occur either $u$ or $u_x$ has to "explode" at $t_{\mathrm{min}}$.

\begin{lem}\label{explosionM}
Let $\mu,\sigma,f,\xi$ be as in Theorem \ref{LocLip}. If $I^b_{\mathrm{max}}=(t^b_{\mathrm{min}},T]$, then
$$\lim_{t\downarrow t_{\mathrm{min}}}\left(\left(1+L_{u(t,\cdot),x}\right)^{-1}-\left(1+L_{\sigma,z}^{-1}\right)^{-1}\right)\left(\|u(t,\cdot)\|_\infty+1\right)^{-1}=0,$$
where $u$ is the decoupling field according to Theorem \ref{globalexistM}.
\end{lem}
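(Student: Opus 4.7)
The plan is to mimic the proof of Lemma \ref{explosion} by contradiction, using Theorem \ref{LocLip} and Remark \ref{hchoiceM} instead of Theorem \ref{locexist} and Remark \ref{hchoice}. The added complication compared to the Lipschitz case is that now two quantities can blow up, $L_{u(t,\cdot),x}$ and $\|u(t,\cdot)\|_\infty$, and the expression in the statement is precisely calibrated so that its non-vanishing forces both to stay under control.

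First I would assume the limit is not zero, which yields $\varepsilon>0$ and a sequence $t_n\downarrow t^b_{\mathrm{min}}$ with
$$\left(\left(1+L_{u(t_n,\cdot),x}\right)^{-1}-\left(1+L_{\sigma,z}^{-1}\right)^{-1}\right)\left(\|u(t_n,\cdot)\|_\infty+1\right)^{-1}\ge \varepsilon$$
for every $n$. Since the first factor is bounded above by $1$, this immediately forces $\|u(t_n,\cdot)\|_\infty\le \varepsilon^{-1}-1$ uniformly. Using also $\|u(t_n,\cdot)\|_\infty+1\ge 1$, it gives $(1+L_{u(t_n,\cdot),x})^{-1}\ge (1+L_{\sigma,z}^{-1})^{-1}+\varepsilon$, so after inverting one finds a constant $L'<L_{\sigma,z}^{-1}$ depending only on $\varepsilon$ and $L_{\sigma,z}$ with $L_{u(t_n,\cdot),x}\le L'$ for all $n$. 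Thus along the sequence both the sup-norm and the spatial Lipschitz constant of $u(t_n,\cdot)$ stay bounded away from their forbidden values, uniformly in $n$.

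Next I would apply Theorem \ref{LocLip} on the interval ending at $t_n$ with terminal condition $\xi:=u(t_n,\cdot)$. By Lemmas \ref{deter} and \ref{contin} this $\xi$ is deterministic, continuous and bounded, and by the bounds just derived it satisfies $L_{\xi,x}<L_{\sigma,z}^{-1}$; hence Theorem \ref{LocLip} produces a bounded, weakly regular decoupling field on some $[t_n-h_n,t_n]$. The key point is Remark \ref{hchoiceM}: $h_n$ depends only on $L_{\xi,x}$, $L_{\xi,x}\|\sigma\|_\infty$, $L_{\xi,x}L_{\sigma,z}$, $\|\xi\|_\infty$, $\|f(\cdot,\cdot,0,0)\|_\infty$ and the $(L_H)$-values, and all of these are uniformly controlled in $n$ by the previous step (the latter three being independent of $n$ from the outset). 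Therefore $h_n\ge h_0$ for some fixed $h_0>0$.

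Finally, I pick $n$ so large that $t_n-h_0<t^b_{\mathrm{min}}$ and glue this new decoupling field to the restriction of $u$ on $[t_n,T]$ via Lemma \ref{glue}. The resulting field is bounded and weakly regular on an interval strictly extending $(t^b_{\mathrm{min}},T]$ to the left, contradicting the definition of $I^b_{\mathrm{max}}$. The only subtle step I expect is the bookkeeping in the second paragraph, namely making the two-dimensional blow-up condition yield the separate a-priori bounds on $\|u(t_n,\cdot)\|_\infty$ and on $L_{u(t_n,\cdot),x}$ in the precise form required to invoke Remark \ref{hchoiceM} with a uniform lower bound on $h_n$; once that is set up, the contradiction argument closes exactly as in Lemma \ref{explosion}.
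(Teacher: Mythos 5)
Your proposal is correct and follows essentially the same route as the paper: negate the limit to extract a sequence $t_n\downarrow t^b_{\mathrm{min}}$ along which both $L_{u(t_n,\cdot),x}$ stays below $L_{\sigma,z}^{-1}$ and $\|u(t_n,\cdot)\|_\infty$ stays bounded, then use Remark \ref{hchoiceM} to get an extension length $h$ independent of $n$ and extend $u$ past $t^b_{\mathrm{min}}$, contradicting the definition of $I^b_{\mathrm{max}}$. Your explicit bookkeeping showing that the combined quantity being bounded away from $0$ yields the two separate uniform bounds is exactly the (unstated) algebra behind the paper's first step, so nothing is missing.
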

\begin{proof}
We argue indirectly. Assume otherwise. Then we can select times $t_n\downarrow t^b_{\mathrm{min}}$, $n\rightarrow\infty$ such that
$$\sup_{n\in\mathbb{N}}L_{u(t_n,\cdot),x}<L_{\sigma,z}^{-1} \quad\textrm{and}\quad \sup_{n\in\mathbb{N}}\|u(t_n,\cdot)\|_\infty<\infty.$$
But then we may choose an $h>0$ according to Remark \ref{hchoiceM} which does not depend on $n$ and then choose $n$ large enough to have $t_n-t^b_{\mathrm{min}}<h$. So $u$ can be extended to the left to a larger interval $[(t_n-h)\vee 0,T]$ contradicting the definition of $I^b_{\mathrm{max}}$.
\end{proof}

\section{Appendix}

\begin{lem}\label{fundulem}
Let $X:\mathcal{M}\times\Lambda\rightarrow\mathbb{R}$ be a weakly differentiable mapping, where $(\mathcal{M},\mathcal{A},\rho)$  is some complete measure space and $\Lambda\subseteq\mathbb{R}^N$ is open, $N\in\mathbb{N}$.\\
Let $h\in(0,\infty)$, $v\in \mathbb{R}^N$ and $\Lambda^h$ the open set of all $\lambda\in\Lambda$ such that $\overline{B_{h}(\lambda)}\subseteq\Lambda$. Then
$$\int_0^h\frac{\dx}{\dx\lambda}X(\omega,\lambda_0+t v)v\dx t=X(\omega,\lambda_0+hv)-X(\omega,\lambda_0)$$
for almost all $\lambda_0\in\Lambda^{h|v|}$, for almost all $\omega\in\mathcal{M}$.
\end{lem}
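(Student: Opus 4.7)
My plan is to reduce everything to the defining integration-by-parts formula for weak derivatives, tested against smooth cutoffs, and then invoke the fundamental lemma of the calculus of variations. Throughout I work with fixed measurable representatives of $X$ and of $\frac{\mathrm{d}}{\mathrm{d}\lambda}X$, so that the function
$$ F(\omega,\lambda_0):=\int_0^h\frac{\mathrm{d}}{\mathrm{d}\lambda}X(\omega,\lambda_0+tv)v\,\mathrm{d}t-X(\omega,\lambda_0+hv)+X(\omega,\lambda_0)$$
is jointly measurable on $\mathcal{M}\times\Lambda^{h|v|}$. By Fubini, showing that for $\rho$-almost every $\omega$ the slice $F(\omega,\cdot)$ vanishes almost everywhere on $\Lambda^{h|v|}$ is equivalent to the statement to be proved.

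Fix an $\omega$ in the set of full $\rho$-measure on which $X(\omega,\cdot)$ is weakly differentiable with locally integrable derivative, and fix an arbitrary test function $\varphi\in C_c^\infty(\Lambda^{h|v|})$. Both summands of $F(\omega,\cdot)$ are locally integrable on $\Lambda^{h|v|}$, so by the fundamental lemma it suffices to check $\int_{\Lambda^{h|v|}}F(\omega,\lambda_0)\varphi(\lambda_0)\mathrm{d}\lambda_0=0$. I apply Fubini in the first summand to exchange the $t$- and $\lambda_0$-integrals, and then change variables $\mu:=\lambda_0+tv$ (legitimate because $\mathrm{supp}\,\varphi$ is compact in $\Lambda^{h|v|}$, so $\mu$ stays in $\Lambda$), obtaining
$$\int_0^h\int_\Lambda \frac{\mathrm{d}}{\mathrm{d}\lambda}X(\omega,\mu)v\,\psi_t(\mu)\,\mathrm{d}\mu\,\mathrm{d}t,\qquad \psi_t(\mu):=\varphi(\mu-tv).$$
Since $\psi_t\in C_c^\infty(\Lambda)$, the definition of the weak derivative applied in direction $v$ converts the inner integral into $-\int_\Lambda X(\omega,\mu)\nabla\psi_t(\mu)\cdot v\,\mathrm{d}\mu$. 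Using $\nabla\psi_t(\mu)\cdot v=-\partial_t\varphi(\mu-tv)$ and another application of Fubini then yields
$$\int_\Lambda X(\omega,\mu)\Big[\int_0^h\partial_t\varphi(\mu-tv)\,\mathrm{d}t\Big]\mathrm{d}\mu=\int_\Lambda X(\omega,\mu)\bigl[\varphi(\mu-hv)-\varphi(\mu)\bigr]\mathrm{d}\mu.$$

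A final substitution $\lambda_0:=\mu-hv$ in the first half of this last expression transforms it into $\int_{\Lambda^{h|v|}}[X(\omega,\lambda_0+hv)-X(\omega,\lambda_0)]\varphi(\lambda_0)\mathrm{d}\lambda_0$, which is exactly the integral of the second and third summand of $F(\omega,\cdot)$ against $\varphi$. Hence $\int F(\omega,\cdot)\varphi=0$ for every admissible $\varphi$, so $F(\omega,\cdot)=0$ almost everywhere on $\Lambda^{h|v|}$, as required.

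The main technical point is bookkeeping of domains: one must make sure that after the two changes of variables the integrands are still supported inside $\Lambda$ so that the definition of the weak derivative and ordinary Fubini apply without boundary corrections. The compact support of $\varphi$ inside $\Lambda^{h|v|}$ is precisely what ensures this; the remaining ingredients are standard.
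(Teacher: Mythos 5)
Your proposal is correct and follows essentially the same route as the paper's proof: test the difference $F(\omega,\cdot)$ against $\varphi\in C^\infty_c(\Lambda^{h|v|})$, use Fubini and the translation of variables to invoke the weak-derivative identity with the shifted test functions $\varphi(\cdot-tv)$, carry out the $t$-integration to produce $\varphi(\cdot-hv)-\varphi(\cdot)$, and conclude by the fundamental lemma of the calculus of variations. The only (harmless) difference is cosmetic bookkeeping, e.g.\ your phrasing about the ``second and third summand'' has the sign flipped, but the cancellation giving $\int F(\omega,\cdot)\varphi=0$ is exactly as in the paper.
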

\begin{proof}
Choose an $\omega\in\Omega$ s.t. $X(\omega,\cdot)$ is weakly differentiable with a weak derivative $\frac{\dx}{\dx\lambda}X(\omega,\cdot)$.
Define a map $F:\Lambda^{h|v|}\rightarrow\mathbb{R}$ via
$$F(\lambda_0):=\int_0^h\frac{\dx}{\dx\lambda}X(\omega,\lambda_0+t v)v\dx t-\left(X(\omega,\lambda_0+hv)-X(\omega,\lambda_0)\right).$$
Note here that $(t,\lambda_0)\mapsto\frac{\dx}{\dx\lambda}X(\omega,\lambda_0+t v)$ is locally integrable since
$$
\int_0^h \int_{\Lambda^{h|v|}\cap K}\left|\frac{\dx}{\dx\lambda}X(\omega,\lambda_0+t v)\right|\dx\lambda_0\dx t\leq \int_0^h \int_{\Lambda\cap \overline{B_{h}(K)}}\left|\frac{\dx}{\dx\lambda}X(\omega,\lambda_0)\right|\dx\lambda_0\dx t= $$
$$ =h\int_{\Lambda\cap \overline{B_{h}(K)}}\left|\frac{\dx}{\dx\lambda}X(\omega,\lambda_0)\right|\dx\lambda_0<\infty, $$
for compacts $K\subseteq \mathbb{R}^N$. The closed $h$ - neighborhood $\overline{B_{h}(K)}$ of $K$ is also compact obviously.\\
We want to show that $F$ is a.e. equal $0$. For this purpose it is sufficient to show $\int_{\Lambda^{h|v|}}F(\lambda_0)\varphi(\lambda_0)\dx\lambda_0=0$ for every test function $\varphi\in C^{\infty}_c(\Lambda^{h|v|})$. Now let $\varphi$ be such a function. Then
$$ \int_{\Lambda^{h|v|}}\int_0^h\varphi(\lambda_0)\frac{\dx}{\dx\lambda}X(\omega,\lambda_0+t v)v\dx t\dx\lambda_0=\int_0^h \int_{\Lambda^{h|v|}}\varphi(\lambda_0)\frac{\dx}{\dx\lambda}X(\omega,\lambda_0+t v)\dx\lambda_0v\dx t=$$
$$=\int_0^h \int_{\Lambda^{h|v|}+t v}\varphi(\lambda_0-tv)\frac{\dx}{\dx\lambda}X(\omega,\lambda_0)\dx\lambda_0v\dx t=\int_0^h \int_{\Lambda}\varphi(\lambda_0-tv)\frac{\dx}{\dx\lambda}X(\omega,\lambda_0)\dx\lambda_0v\dx t= $$
$$=-\int_0^h \int_{\Lambda}X(\omega,\lambda_0)\frac{\dx}{\dx\lambda}\varphi(\lambda_0-tv)\dx\lambda_0v\dx t=-\int_{\Lambda}\int_0^h X(\omega,\lambda_0)\frac{\dx}{\dx\lambda}\varphi(\lambda_0-tv)v\dx t\dx\lambda_0=$$
$$=\int_{\Lambda}X(\omega,\lambda_0)\int_0^h \frac{\dx}{\dx\lambda}\varphi(\lambda_0-tv)(-v)\dx t\dx\lambda_0=\int_{\Lambda}X(\omega,\lambda_0)\left(\varphi(\lambda_0-hv)-\varphi(\lambda_0)\right)\dx\lambda_0=$$
$$=\int_{\Lambda}X(\omega,\lambda_0)\varphi(\lambda_0-hv)\dx\lambda_0-\int_{\Lambda}X(\omega,\lambda_0)\varphi(\lambda_0)\dx\lambda_0= $$
$$=\int_{\Lambda^{h|v|}+hv}X(\omega,\lambda_0)\varphi(\lambda_0-hv)\dx\lambda_0-\int_{\Lambda^{h|v|}}X(\omega,\lambda_0)\varphi(\lambda_0)\dx\lambda_0= $$
$$=\int_{\Lambda^{h|v|}}X(\omega,\lambda_0+hv)\varphi(\lambda_0)\dx\lambda_0-\int_{\Lambda^{h|v|}}X(\omega,\lambda_0)\varphi(\lambda_0)\dx\lambda_0= $$
$$=\int_{\Lambda^{h|v|}}\left(X(\omega,\lambda_0+hv)-X(\omega,\lambda_0)\right)\varphi(\lambda_0)\dx\lambda_0. $$
This already implies $\int_{\Lambda^{h|v|}}F(\lambda_0)\varphi(\lambda_0)\dx\lambda_0=0$.
\end{proof}

\begin{proof}[Proof of Lemma \ref{chainrule}]
For the weak differentiability of $g(X)$ consult \cite{ambrosio}, Corollary 3.2 (applied $\omega$-wise). \\
We use the following notation. For some vector $(x_1,\ldots,x_k)\in\mathbb{R}^{\sum_{i=1}^kd_i}$, $x^{i,j}$ refers to the vector $(x_i,\ldots,x_j)$, where $i<j$, $i,j\in\{1,\ldots,k\}$. \\
Let $v\in S^{n-1}$ be fixed. Let $\lambda\in\mathbb{R}^n$ and $t\in\mathbb{R}$. Then
\begin{multline}\label{telescope} g(\omega,X(\omega,\lambda+tv))-g(\omega,X(\omega,\lambda))= \\
=\sum_{i=1}^k\left(g\left(\omega,X^{1,i}(\omega,\lambda+tv),X^{i+1,k}(\omega,\lambda)\right)-g\left(\omega,X^{1,i-1}(\omega,\lambda+tv),X^{i,k}(\omega,\lambda)\right)\right) = \\
=\sum_{i=1}^k\frac{g\left(X^{1,i}(\lambda+tv),X^{i+1,k}(\lambda)\right)-g\left(X^{1,i-1}(\lambda+tv),X^{i,k}(\lambda)\right)}{X_i(\lambda+tv)-X_i(\lambda)}(\omega) \left(X_i(\omega,\lambda+tv)-X_i(\omega,\lambda)\right),
\end{multline}
where we use the convention $\frac{0}{0}:=0$. Now define
$$ \Delta^{X}_{x_i}g(\cdot,\lambda,v):=\limsup_{l\rightarrow\infty}\frac{g\left(X^{1,i}(\lambda+\frac{1}{l}v),X^{i+1,k}(\lambda)\right)-g\left(X^{1,i-1}(\lambda+\frac{1}{l}v),X^{i,k}(\lambda)\right)}{X_i(\lambda+\frac{1}{l}v)-X_i(\lambda)}. $$
Note that $|\Delta^{X}_{x_i}g|$ can be assumed to be bounded by $L_{g,x_i}$ (everywhere), due to Lipschitz continuity of $g$ in the $i$-th component (we can assume without loss of generality, that $g$ is Lipschitz for all $\omega$). Furthermore $\Delta^{X}_{x_i}g$ is clearly measurable, as a $\limsup$ of measurable mappings.
Also note that for almost all $\lambda,\omega$
$$ \lim_{t\rightarrow 0}\frac{g(\omega,X(\omega,\lambda+tv))-g(\omega,X(\omega,\lambda))}{t}=\left(\frac{\dx}{\dx \lambda}g(X)(\omega,\lambda)\right)v, $$
$$ \lim_{t\rightarrow 0}\frac{X_i(\omega,\lambda+tv)-X_i(\omega,\lambda)}{t}=\left(\frac{\dx}{\dx \lambda}X_i(\omega,\lambda)\right)v. $$
(Recall that $v$ is fixed here.) \\
Now for all such $\lambda,\omega$ we can choose a sequence $(t_q)_{q\in\mathbb{N}}$, which is a subsequence of $\left(\frac{1}{l}\right)_{l\in\mathbb{N}}$ and such that
$$ \Delta^{X}_{x_i}g(\omega,\lambda,v)=\lim_{q\rightarrow \infty}\frac{g\left(X^{1,i}(\lambda+t_qv),X^{i+1,k}(\lambda)\right)-g\left(X^{1,i-1}(\lambda+t_qv),X^{i,k}(\lambda)\right)}{X_i(\lambda+t_qv)-X_i(\lambda)}(\omega). $$
At the same time
$$ \lim_{q\rightarrow \infty}\frac{g(\omega,X(\omega,\lambda+t_qv))-g(\omega,X(\omega,\lambda))}{t_q}=\left(\frac{\dx}{\dx \lambda}g(X)(\omega,\lambda)\right)v, $$
$$ \lim_{q\rightarrow \infty}\frac{X_i(\omega,\lambda+t_qv)-X_i(\omega,\lambda)}{t_q}=\left(\frac{\dx}{\dx \lambda}X_i(\omega,\lambda)\right)v.$$
So we obtain from (\ref{telescope})
$$ \left(\frac{\dx}{\dx \lambda}g(X)(\omega,\lambda)\right)v=\sum_{i=1}^k\left(\Delta^{X}_{x_i}g(\omega,\lambda,v)\right)\left(\frac{\dx}{\dx \lambda}X_i(\omega,\lambda)\right)v.$$
\end{proof}

\begin{proof}[Proof of Lemma \ref{wd1}]
Define mappings $Y:\Omega\times\mathbb{R}^n\rightarrow\mathbb{R}$ and $Z:\Omega\times\mathbb{R}^n\rightarrow\mathbb{R}^n$ via
$Y(\omega,\lambda):=\mathbb{E}\left[X(\cdot,\lambda)|\mathcal{G}\right](\omega)$ and $Z(\omega,\lambda):=\mathbb{E}\left[\frac{\dx}{\dx \lambda}X(\cdot,\lambda)|\mathcal{G}\right](\omega)$.
Note that $Y$ is measurable, since $Y\mathbf{1}_{\Omega\times B_R(\lambda_0)}$ can be seen as conditional expectation w.r.t. $\mathcal{G}\otimes\mathcal{L}(B_R(\lambda_0))$ on the space
$$\left(\Omega\times B_R(\lambda_0),\,\mathcal{F}\otimes\mathcal{L}(B_R(\lambda_0)),\,\mathbb{P}\otimes \frac{1}{|B_R(\lambda_0)|}\rho_{B_R(\lambda_0)}\right),$$ where $\lambda_0\in\mathbb{R}^n$ and $R>0$ are arbitrary. \\
We now claim that $Z$ is the weak derivative of $Y$. To see this, take a test function $\varphi\in C^\infty_c(\mathbb{R}^n)$. We have
$$\int_{\mathbb{R}^n}\varphi(\lambda)Y(\cdot,\lambda)\dx\lambda=\int_{\mathbb{R}^n}\varphi(\lambda)\mathbb{E}\left[\frac{\dx}{\dx \lambda}X(\cdot,\lambda)\Big|\mathcal{G}\right]\dx\lambda=
\mathbb{E}\left[\int_{\mathbb{R}^n}\varphi(\lambda)\frac{\dx}{\dx \lambda}X(\cdot,\lambda)\dx\lambda\bigg|\mathcal{G}\right]=$$
$$=\mathbb{E}\left[\int_{\mathbb{R}^n}\frac{\dx}{\dx \lambda}\varphi(\lambda)X(\cdot,\lambda)\dx\lambda\bigg|\mathcal{G}\right]=
\int_{\mathbb{R}^n}\frac{\dx}{\dx \lambda}\varphi(\lambda)\mathbb{E}\left[X(\cdot,\lambda)\Big|\mathcal{G}\right]\dx\lambda=\int_{\mathbb{R}^n}\frac{\dx}{\dx \lambda}\varphi(\lambda)Z(\cdot,\lambda)\dx\lambda,$$
where we used Fubini's theorem.
\end{proof}

\begin{proof}[Proof of Lemma \ref{wd3}]
Measurability of $X$ follows from the fact that the stochastic integral $\int_0^TZ_s\dx W_s$ can be defined as an a.e. limit of integrals over simple progressive processes $Z^n$ and such integrals are measurable, since $Z^n_s(\cdot,\cdot)$ must be measurable for every $s\in[0,T]$.\\
It remains to verify that $\int_0^T\frac{\dx}{\dx\lambda}Z_s(\cdot,\lambda)^\top\dx W_s$ is a weak derivative of $X$. To see this, take a test function $\varphi~\in~ C^\infty_c(\mathbb{R}^n)$ and choose any index $i\in\{1,\ldots,n\}$. Then
$$\int_{\mathbb{R}^n}\varphi(\lambda)\int_0^T\frac{\dx}{\dx\lambda_i}Z_s(\cdot,\lambda)^\top\dx W_s\dx\lambda=\int_{\mathbb{R}^n}\int_0^T\varphi(\lambda)\frac{\dx}{\dx\lambda_i}Z_s(\cdot,\lambda)^\top\dx W_s\dx\lambda= $$
$$ =\int_0^T\int_{\mathbb{R}^n}\varphi(\lambda)\frac{\dx}{\dx\lambda_i}Z_s(\cdot,\lambda)^\top\dx\lambda\dx W_s=\int_0^T\int_{\mathbb{R}^n}\frac{\dx}{\dx\lambda_i}\varphi(\lambda)Z_s(\cdot,\lambda)^\top\dx\lambda\dx W_s=$$
$$=\int_{\mathbb{R}^n}\frac{\dx}{\dx\lambda_i}\varphi(\lambda)\int_0^TZ_s(\cdot,\lambda)^\top\dx W_s\dx\lambda=\int_{\mathbb{R}^n}\frac{\dx}{\dx\lambda_i}\varphi(\lambda)X(\cdot,\lambda)\dx\lambda, $$
where used continuity and linearity of the stochastic integral twice.
\end{proof}

\begin{proof}[Proof of Lemma \ref{wd4}]
Existence of $Z$ follows from the It\^o representation formula, which is applied to $X$. It can also be applied $\frac{\dx}{\dx\lambda}X$, yielding a second progressively measurable process $\tilde{Z}:\Omega\times[0,T]\times\mathbb{R}^n\rightarrow\mathbb{R}^{d\times n}$. It remains to show that $\tilde{Z}$ is a weak derivative of $Z$. To verify this, take a test function $\varphi~\in~ C^\infty_c(\mathbb{R}^n)$ and choose any index $i\in\{1,\ldots,n\}$. Then we have
$$ \int_0^T\int_{\mathbb{R}^n}\varphi(\lambda)\tilde{Z}^i_s(\cdot,\lambda)^\top\dx\lambda\dx W_s=\int_{\mathbb{R}^n}\varphi(\lambda) \int_0^T\tilde{Z}^i_s(\cdot,\lambda)^\top\dx W_s\dx\lambda=$$
$$ =\int_{\mathbb{R}^n}\varphi(\lambda)\left(\frac{\dx}{\dx\lambda_i}X(\cdot,\lambda)-\mathbb{E}\left[\frac{\dx}{\dx\lambda_i}X(\cdot,\lambda)\right]\right)\dx\lambda=
\int_{\mathbb{R}^n}\frac{\dx}{\dx\lambda_i}\varphi(\lambda)\left(X(\cdot,\lambda)-\mathbb{E}\left[X(\cdot,\lambda)\right]\right)\dx\lambda=$$
$$ =\int_{\mathbb{R}^n}\frac{\dx}{\dx\lambda_i}\varphi(\lambda)\int_0^TZ_s(\cdot,\lambda)^\top\dx W_s\dx\lambda=\int_0^T\int_{\mathbb{R}^n}\frac{\dx}{\dx\lambda_i}\varphi(\lambda)Z_s(\cdot,\lambda)^\top\dx\lambda\dx W_s ,$$
which implies $\int_{\mathbb{R}^n}\varphi(\lambda)\tilde{Z}^i_s(\cdot,\lambda)^\top\dx\lambda=\int_{\mathbb{R}^n}\frac{\dx}{\dx\lambda_i}\varphi(\lambda)Z_s(\cdot,\lambda)^\top\dx\lambda$.
\end{proof}

\begin{proof}[Proof of Lemma \ref{wd5}]
For each $t\in [0,T]$ define
$$ Y_t:=\mathbb{E}\left[X-\int_t^TV_s\dx s\bigg|\mathcal{F}_t\right]=\mathbb{E}\left[X-\int_0^TV_s\dx s\bigg|\mathcal{F}_t\right]+\int_0^tV_s\dx s . $$
The mapping $Y_t:\Omega\times\mathbb{R}^n\rightarrow\mathbb{R}$ is $\mathcal{F}_t\otimes\mathcal{L}(\mathbb{R}^n)$ measurable and weakly differentiable w.r.t. $\lambda$, such that
$\frac{\dx}{\dx\lambda}Y_t=\mathbb{E}\left[\frac{\dx}{\dx\lambda}X-\int_t^T\frac{\dx}{\dx\lambda}V_s\dx s\bigg|\mathcal{F}_t\right]$, according to Lemma \ref{wd2} and  Lemma \ref{wd1}. Thereby we obtain a process $Y$, which is continuous in time and therefore progressively measurable. Now define
$$ M:=X-\int_0^TV_s\dx s-Y_0 .$$
$M:\Omega\times\mathbb{R}^n\rightarrow\mathbb{R}$ is measurable and weakly differentiable w.r.t. $\lambda$. It is also straightforward to check, that $\mathbb{E}\left[\left|M(\cdot,\lambda)\right|^2\right]$ and $\mathbb{E}\left[\left|\frac{\dx}{\dx\lambda}M(\cdot,\lambda)\right|^2\right]$ are both locally integrable w.r.t. $\lambda$. Therefore we can apply Lemma \ref{wd4} and write $M=\int_0^TZ_s^\top\dx W_s$ with a progressively measurable and weakly differentiable $Z$. Also
$$ \int_0^TZ_s^\top\dx W_s=X-\int_0^TV_s\dx s-Y_0 .$$
Applying conditional expectations yields
$$ \int_0^tZ_s^\top\dx W_s=\mathbb{E}\left[X-\int_0^TV_s\dx s\bigg|\mathcal{F}_t\right]-Y_0=Y_t-\int_0^tV_s\dx s-Y_0 .$$
Subtracting this equation from the preceding one leads to
$$ \int_t^TZ_s^\top\dx W_s=X-\int_t^TV_s\dx s-Y_t .$$
We can now differentiate w.r.t. $\lambda$ according to Lemma \ref{wd3} with the result
$$ \int_t^T\frac{\dx}{\dx\lambda}Z_s^\top\dx W_s=\frac{\dx}{\dx\lambda}X-\int_t^T\frac{\dx}{\dx\lambda}V_s\dx s-\frac{\dx}{\dx\lambda}Y_t .$$
\end{proof}

\begin{proof}[Proof of Lemma \ref{weakderivexist}]
Let $K>0$ be a constant, and let $\Lambda_R:=\Lambda\cap B_R(0)$, $R\in\mathbb{N},$ be a bounded subset of $\Lambda$. Then
$\int_{\mathcal{M}}|X_i(\lambda,\cdot)-X(\lambda,\cdot)|^2\wedge K\dx\rho\rightarrow 0$ for $i\rightarrow\infty$ for almost all $\lambda\in\Lambda$. Thus
$$\int_{\Lambda_R}\int_{\mathcal{M}}|X_i(\lambda,\cdot)-X(\lambda,\cdot)|^2\wedge K\dx\rho\dx\lambda\rightarrow 0$$ for $i\rightarrow\infty$ by dominated convergence. Thus $(X_i)$ is a Cauchy sequence in measure on $\Lambda_R\times\mathcal{M}$. Passing to a subsequence we can assume that $(X_i)$ converges almost everywhere to some measurable $\tilde{X}_R$ on $\Lambda_R\times\mathcal{M}$. Since $R\in\mathbb{N}$ can be chosen arbitrarily, we can, again by passing to a subsequence, assume that $(X_i)$ converges almost everywhere to some measurable $\tilde{X}$ on $\Lambda\times\mathcal{M}$. It is easy to check using uniqueness of $\mathcal{L}^2$-limits that $X$ and $\tilde{X}$ must coincide a.e.. Since $\Lambda\times\mathcal{M}$ is a complete measure space, $X$ is measurable. \\
Next we claim that $\int_{\mathcal{M}}|X_i(\lambda,\cdot)|^2\dx\rho$ is bounded (independent of $i$ and $\lambda$) on balls of the form $B_\varepsilon(\lambda_0)\subseteq \Lambda$ with $0<\varepsilon\leq 1$. We have for a.a. $\lambda\in B_\varepsilon(\lambda_0)$
$$ \left|\sqrt{\int_{\mathcal{M}}|X_i(\lambda,\cdot)|^2\dx\rho}-
\sqrt{\int_{\mathcal{M}}|X_i(\lambda_0,\cdot)|^2\dx\rho}\right|
\leq \sqrt{\int_{\mathcal{M}}|X_i(\lambda,\cdot)-X_i(\lambda_0,\cdot)|^2\dx\rho}=$$
$$= \sqrt{\int_{\mathcal{M}}\left|\int_0^1 \frac{\dx}{\dx\lambda}X_i(\lambda_0+s(\lambda-\lambda_0),\cdot)(\lambda-\lambda_0)\dx s\right|^2\dx\rho}\leq $$
$$\leq \sqrt{\int_{\mathcal{M}}\int_0^1\left| \frac{\dx}{\dx\lambda}X_i(\lambda_0+s(\lambda-\lambda_0),\cdot)(\lambda-\lambda_0)\right|^2\dx s\dx\rho}\leq $$
$$\leq \sqrt{\int_0^1\int_{\mathcal{M}}\left| \frac{\dx}{\dx\lambda}X_i(\lambda_0+s(\lambda-\lambda_0),\cdot)\right|^2\left|\lambda-\lambda_0\right|^2\dx\rho\dx s}\leq \sqrt{C} \left|\lambda-\lambda_0\right|\leq \sqrt{C}.$$
Therefore $(i,\lambda)\mapsto\int_{\mathcal{M}}|X_i(\lambda,\cdot)|^2\dx\rho$, $\lambda\in B_\varepsilon(\lambda_0)$, $i\in\mathbb{N},$ must be bounded, since $\sup_{i\in\mathbb{N}}\int_{\mathcal{M}}|X_i(\lambda_0,\cdot)|^2\dx\rho<\infty$, because of the $\mathcal{L}^2$--convergence of $X_i(\lambda_0,\cdot)$. \\
Let $\mathbb{H}_{\varepsilon,\lambda_0,\delta}$ be the space of real valued measurable functions $Y$ on $S_{\varepsilon,\lambda_0}:=B_\varepsilon(\lambda_0)\times\mathcal{M}$ s.t.
$$\sum_{0\leq |\alpha|\leq\delta}\int_{B_\varepsilon(\lambda_0)}\int_{\mathcal{M}}|D^\alpha_\lambda Y(\lambda,\cdot)|^2\dx\rho\dx\lambda < \infty.$$
Obviously $(X_i)$ is a bounded sequence in $\mathbb{H}_{\varepsilon,\lambda_0,\delta}$.
We claim that $X$ must be in $\mathbb{H}_{\varepsilon,\lambda_0,\delta}$ too.
Let $\alpha\in\mathbb{N}^N$ be a multiindex s.t. $1\leq|\alpha|\leq\delta$. We have
$$\int_{B_\varepsilon(\lambda_0)}\int_{\mathcal{M}}D^\alpha_\lambda X_i(\lambda,\cdot)\varphi(\lambda,\cdot)\dx\rho\dx\lambda=(-1)^{|\alpha|}
\int_{B_\varepsilon(\lambda_0)}\int_{\mathcal{M}}X_i(\lambda,\cdot)D^\alpha_\lambda \varphi(\lambda,\cdot)\dx\rho\dx\lambda $$
for all $\varphi\in\mathbb{H}_{\varepsilon,\lambda_0,\delta}$ s.t. the support of $\varphi(\cdot,\omega)$ is a subset of $B_\varepsilon(\lambda_0)$ for all $\omega\in\mathcal{M}$. $(D^\alpha_\lambda X_i)$ is a bounded sequence in the Hilbert space $\mathbb{H}_{\varepsilon,\lambda_0}=\mathcal{L}^2(S_{\varepsilon,\lambda_0})$. Therefore, by passing to a subsequence, we can assume that there exists a weak limit $X^\alpha$ in $\mathcal{L}^2(S_{\varepsilon,\lambda_0})$. Thus
$$ \lim_{i\rightarrow\infty}\int_{B_\varepsilon(\lambda_0)}\int_{\mathcal{M}}D^\alpha_\lambda X_i(\lambda,\cdot)\varphi(\lambda,\cdot)\dx\rho\dx\lambda=
\int_{B_\varepsilon(\lambda_0)}\int_{\mathcal{M}}X^\alpha(\lambda,\cdot)\varphi(\lambda,\cdot)\dx\rho\dx\lambda. $$
On the other hand
$$ \lim_{i\rightarrow\infty}\int_{B_\varepsilon(\lambda_0)}\int_{\mathcal{M}}X_i(\lambda,\cdot)D^\alpha_\lambda \varphi(\lambda,\cdot)\dx\rho\dx\lambda=
\int_{B_\varepsilon(\lambda_0)}\int_{\mathcal{M}}X(\lambda,\cdot)D^\alpha_\lambda \varphi(\lambda,\cdot)\dx\rho\dx\lambda $$
by the $\mathcal{L}^2$ convergence of the $X_i$. This shows the weak differentiability of $X$ w.r.t. $\lambda$ and $X^\alpha=D^\alpha_\lambda X$. In particular $X\in \mathbb{H}_{\varepsilon,\lambda_0,\delta}$.

It remains to show
$$g(\lambda):=\sum_{1\leq |\alpha|\leq\delta}\int_{\mathcal{M}}|D^\alpha_\lambda X(\lambda,\cdot)|^2\dx\rho \leq C,$$
for almost all $\lambda\in B_\varepsilon(\lambda_0)$. Let $B\subseteq B_\varepsilon(\lambda_0)$ be a measurable subset of $B_\varepsilon(\lambda_0)$. Using weak convergence we get
$$\int_{B}g(\lambda)\dx\lambda=\sum_{1\leq |\alpha|\leq\delta}\int_{B}\int_{\mathcal{M}}
|D^\alpha_\lambda X(\lambda,\cdot)|^2\dx\rho\dx\lambda=\lim_{i\rightarrow\infty}\sum_{1\leq |\alpha|\leq\delta}\int_{B}\int_{\mathcal{M}}
D^\alpha_\lambda X_i(\lambda,\cdot)D^\alpha_\lambda X(\lambda,\cdot)\dx\rho\dx\lambda. $$
Using Cauchy-Schwarz' inequality and $\sum_{1\leq |\alpha|\leq\delta}\int_{\mathcal{M}}|D_\lambda^\alpha X_i(\lambda,\cdot)|^2\dx\rho\leq C$, we obtain
$$ \int_{B}g(\lambda)\dx\lambda\leq \sqrt{|B|\cdot C}\cdot \left(\int_{B}g(\lambda)\dx\lambda\right)^{\frac{1}{2}}. $$
In other words $\frac{1}{|B|}\int_{B}g(\lambda)\dx\lambda\leq C$ for all measurable $B\subseteq B_\varepsilon(\lambda_0)$. This implies $g\leq C$ a.e. by Lebesgue's differentiation theorem.
\end{proof}

\end{document}